\documentclass[paper=a4, fontsize=11pt]{scrartcl}

\usepackage[sc]{mathpazo}
\linespread{1.05}
\usepackage[scaled]{helvet}
\usepackage[T1]{fontenc}

\usepackage[english]{babel}															
\usepackage[protrusion=true,expansion=true]{microtype}	
\usepackage{amsmath,amsfonts,amsthm} 
\usepackage[pdftex]{graphicx}	
\usepackage{caption,subcaption}  

\usepackage{colortbl}
\usepackage{booktabs}
\usepackage{tabularx} 
\usepackage{float} 

\usepackage[dvipsnames]{xcolor}
\usepackage{fullpage}
\usepackage{natbib}

\usepackage{tabularx} 
\usepackage{enumerate,enumitem}
\usepackage{bm,bbm}
\newcommand\ind{\mathbbm{1}}     

\newtheorem{theorem}{Theorem}

\newtheorem{corollary}[theorem]{Corollary}
\newtheorem{proposition}[theorem]{Proposition}
\newtheorem{lemma}{Lemma}

\newtheorem*{algorithm}{Algorithm}

\usepackage[colorlinks=true,citecolor=blue,urlcolor=blue]{hyperref}

\usepackage{algorithm}
\usepackage{algpseudocode}
\usepackage{adjustbox}
\usepackage{changebar}




\newcommand{\spann}{\operatorname{span}}
\newcommand{\sign}{\operatorname{sign}}

\newcommand{\expit}{\operatorname{expit}}
\newcommand{\logit}{\operatorname{logit}}

\newcommand{\diff}{\mathrm{d}}

\DeclareMathOperator*{\argmin}{arg\,min}
\DeclareMathOperator*{\argmax}{arg\,max}



\definecolor{offwhite}{RGB}{255,250,240}
\definecolor{gray}{RGB}{155,155,155}
\definecolor{foreground}{RGB}{80,80,80}
\definecolor{background}{RGB}{255,255,255}
\definecolor{title}{RGB}{89,132,212}
\definecolor{subtitle}{RGB}{255,199,0}
\definecolor{hilit}{RGB}{248,117,79}
\definecolor{vhilit}{RGB}{255,111,207}
\definecolor{lolit}{RGB}{200,200,200}
\definecolor{lit}{RGB}{255,199,0}
\definecolor{mdlit}{RGB}{71,106,170}
\definecolor{link}{RGB}{248,117,79}
\definecolor{subcol}{RGB}{207,216,233}


\definecolor{darkraspberry}{rgb}{0.53, 0.15, 0.34}


\usepackage{sectsty}
\allsectionsfont{\centering \normalfont\scshape}

\usepackage{fancyhdr}
\pagestyle{fancyplain}
\fancyhead{}											
\fancyfoot[L]{}											
\fancyfoot[C]{}											
\fancyfoot[R]{\thepage}									
\setlength{\headheight}{13.6pt}

\usepackage{chngcntr}
\counterwithout{table}{section}
\counterwithout{figure}{section}


\title{Nearest-neighbor LASSO logistic regression for the gradient
}

\author{
Touqeer Ahmad$^{a}$,
Fran\c{c}ois Portier$^{b}$ \&
Gilles Stupfler$^{c}$
}

\date{
$^{a}$ {\small Department of Mathematics, University of Oslo, P.O. Box 1053, Blindern, Oslo, Norway,} \\[1ex]
$^{b}$ {\small CREST, ENSAI, University of Rennes, France.} \\[1ex]
$^{c}$ {\small Univ Angers, CNRS, LAREMA, SFR MATHSTIC, F-49000 Angers, France.}
}
\begin{document}

\maketitle
\textbf{Abstract.} This paper investigates a new approach to estimate the gradient of the conditional probability given the covariates in the binary classification framework. The proposed approach consists of fitting a localized nearest-neighbor logistic model with $\ell_1$-penalty in order to cope with possibly high-dimensional covariates. Our theoretical analysis shows that the pointwise convergence rate of the gradient estimator is optimal under very mild assumptions. Moreover, using an outer product of such gradient estimates at several points in the covariate space, we provide a new method for estimating the central subspace, a well-known object allowing to carry out dimension reduction within the covariate space. Our implementation uses cross-validation on the misclassification rate to estimate the dimension of this subspace. We find that the proposed approach outperforms existing competitors in synthetic and real data applications.

\vskip1ex

\textbf{AMS Subject Classifications:} 62G08, 62G20, 62J07, 62J12

\vskip1ex
\textbf{Keywords:} Binary classification, Cross-entropy, Dimension reduction, Local linear estimation, Nearest neighbors, Penalization, Weak convergence.

\section{Introduction}
\label{sec:intro}

\cbstart
Estimating the gradient of a regression function is a central problem in nonparametric statistics and machine learning. Applications include plug-in bandwidth selection for kernel smoothers \citep{ruppert1995effective} and the construction of confidence intervals in nonparametric regression \citep{eubank1993confidence}. Beyond smoothing and inference, gradient estimation plays a key role in uncovering structural properties of high-dimensional data. Dimension-reduction methods based on average derivative estimation \citep{hardle1989investigating,hardle1992bandwidth} were later extended through the outer-product of gradients framework \citep{xia2002adaptive,dalalyan:2008}. More recently, gradient estimation has been used in sparse settings for zeroth-order optimization and guided-gradient regression trees \citep{wang2018stochastic,ausset2021nearest}, as well as for explaining individual predictions of complex classification models \citep{zhang2019should}.


A widely studied technique for gradient estimation is \textit{local linear estimation} \citep{fan1996}. The method consists of fitting a linear approximation of the regression function within a 
neighborhood of each point $x$; the intercept estimates the function itself at the point $x$, while the slope coefficients estimate its gradient.
Local linear estimators achieve minimax optimal convergence rates \citep{stone1980optimal,stone1982optimal,fan1996}, satisfy asymptotic normality \citep{fan1996}, 
enjoy strong uniform convergence guarantees \citep{masry1996multivariate}, 
and correct first-order boundary bias \citep{fan1996}. Fitting local linear functions in the broader context of likelihood models has been investigated in \cite{rob_tib,MR1325121}.

Despite these favorable properties, local linear estimation becomes difficult when the dimension $p$ is large, because local neighborhoods contain very few points due to the curse of dimensionality~\citep[Section 4.5]{wasserman2006all}.
Then even a local linear fit may suffer from instability or overfitting. A natural way to address this issue is to introduce penalization.
Interpretability of the estimated gradient coefficients constitutes another important challenge. As emphasized in \cite{rosasco2013nonparametric}, partial derivatives provide a natural way ``to measure the importance of each variable in the model''. 
Promoting sparsity in the gradient thus yields a form of local variable selection (identifying which covariates are influential in the neighborhood of a given point). This provides a strong motivation for incorporating $\ell_1$-penalization as proposed in LASSO regression \citep{tibshirani1996regression}. In this context, the $\ell_1$-penalization is expected not only to mitigate overfitting, but also to enhance interpretability by producing sparse gradient estimates. In addition, it has proven particularly effective in high-dimensional settings where the number of covariates $p$ may be large relative to the sample size \citep{tsybakov2009simultaneous}. 


In this paper, we hence investigate a local version of a logistic, LASSO-based estimation procedure of the gradient of the 
conditional probability in the classification setup. Let $(Y,X)\in \{0,1\} \times \mathbb R^p$ be a random vector with distribution $ P$. The conditional class probability is given by $ \pi (x)  = \mathbb P (Y= 1| X= x)$ and its logit transform is defined as $\ell (x) := \operatorname{logit}(\pi(x)) := \log (\pi(x) / (1-\pi(x)) $. 
Let $(Y_i,X_i) _{i=1,\ldots, n}$ be an independent collection of random variables with common distribution $P$. For each $x\in \mathbb R^p$, 
we consider the following nearest-neighbor, penalized, local logistic (and hence convex) problem 
\begin{align*}
    & (\widehat{a}_n(x), \widehat{b}_n(x) ) : = \argmax_{(a,b)\in \mathbb{R}\times \mathbb{R}^p} \Bigg\{ \sum_{i \in N_k(x)} Y_i \log( \expit(a + b^T (X_i-x)) ) \\
    &\qquad \qquad \qquad \qquad +\sum_{i \in N_k(x)} (1-Y_i) \log( 1-\expit(a + b^T (X_i-x)) ) - \lambda \|b\|_1 \Bigg\} 
\end{align*}
where $N_k(x)$ is the index set of the $k$-nearest neighbors to point $x$, $\expit(t):=\exp(t)/(1+\exp(t))$ and $\| \cdot \|_1$ is the $\ell_1-$norm on $\mathbb{R}^p$. The proposed approach might be seen as a 
nearest-neighbor, penalized adaptation of \cite{MR1325121} where standard kernel smoothing is used with $\lambda = 0$. Using the nearest-neighbor method 
ensures an adaptive bandwidth choice, in the sense that the bandwidth corresponding to a given number $k$ of nearest neighbors is random and depends on the position of the covariates around the target point $x$, while keeping computational time reasonable thanks to algorithms such as $k-d$ tree search. Using the $\ell_1$-penalty, as explained before, mitigates overfitting by inducing sparsity in the estimated coefficients, thereby enabling stable estimation in high-dimensional settings. \textcolor{black}{One closely related contribution is \cite{ausset2021nearest}, which studies a local linear LASSO-based estimator of the gradient in a nonparametric regression setting, rather than within the classification framework considered in the present work.}

The main result of the paper is the weak convergence of $\{ (\widehat{a}_n(x), \widehat{b}_n(x) ) - (\ell(x), \nabla \ell(x))\}  $, when suitably rescaled, under very mild conditions on $P$. 
It can be compared with the weak convergence results from \cite{fu2000asymptotics}, obtained in the linear regression framework; in contrast to~\cite{fu2000asymptotics}, we study logistic regression rather than linear regression, which leads to a non-quadratic likelihood and requires a different local asymptotic analysis, and we incorporate nearest-neighbor localization. The limiting distributions are similar as both put positive mass at the atom $0$ where the true parameter's coordinate is $0$, illustrating the method's ability to select variables. In contrast, our convergence rates are different from that of \cite{fu2000asymptotics} due to the nearest-neighbor localization and to our focus on nonparametric gradient estimation. The obtained rates for $\ell (x)$ and $\nabla \ell(x) $ achieve the minimax optimal rates of convergence as described in \cite{stone1982optimal}. In particular, our theoretical results compare favorably to the ones established in \cite{kanshi2022}, building on earlier work by~\cite{mukherjee2006estimation},  where the gradient is estimated using a \textit{Reproducing Kernel Hilbert Space} (RKHS) technique (see below our Corollary \ref{coro:main} for a precise comparison). 
\textcolor{black}{Our result can also be compared to that of \cite{MR1325121} (resp. \cite{ausset2021nearest}) where a similar rate of convergence in probability is obtained without using any penalty and with standard Nadaraya-Watson weights (resp. with LASSO penalization and nearest-neighbor localization but in classical regression).}

To show the usefulness of our gradient estimation method, we consider the following procedure: the gradient estimates at several points are aggregated using the standard outer product of gradients of \textit{e.g.}~Section 3.1 in \cite{xia2002adaptive}. The dimension reduction space, called central subspace, is then recovered by finding the eigenspaces of this matrix corresponding to its $d$ largest eigenvalues. Our use of the LASSO penalty promotes sparsity in the estimated vectors, but differs conceptually from the approach of \cite{li2007sparse}. In our framework, the dimension reduction matrix is constructed directly from sparse (nonparametric) gradient estimates. By contrast, \cite{li2007sparse} first estimates the dimension reduction matrix without imposing sparsity, using the inverse regression characterization of the dimension reduction space, and subsequently derives an eigenbasis that favors sparse components. Unlike earlier local linear techniques such as that of~\cite{lambert2006local} and~\cite{quali2023}, we may deal with 
large dimensions (of the order of several dozens when the sample size is $n=1000$). While most 
existing dimension reduction methods select the number $d$ of components in the reduction subspace by using rank testing procedures (see~\cite{bura2011dimension} and~\cite{portier2014bootstrap} for eigenvalue-based methods, and~\cite{luoli2016} for a technique using both the eigenvalues and eigenvectors), we 
rely on the underlying classification context by developing a simple cross-validation approach comparing the different sets of ordered eigenvectors given by the outer product of gradients. Surprisingly, this is helpful in improving the final accuracy of the classification method even when the true dimension of the reduction subspace is known.

The paper is organized as follows. In Section \ref{sec:main:back}, we describe our statistical framework 
and we construct our nearest-neighbor, penalized local logistic loss function, whose maximization gives rise to the proposed gradient estimator. In Section \ref{sec:main:estimator}, we establish the pointwise weak convergence of this estimator and Section~\ref{sec:extension} discusses an extension to multi-class generalized linear models. \textcolor{black}{In Section \ref{sec:numerical_exp}, we apply our gradient estimator in order to achieve dimension reduction in classification. More precisely, we employ an outer-product of gradients to estimate the central subspace as explained in Section~\ref{sec:aggregating}. 
The practical aspects of the final algorithm are discussed in Section \ref{algorith1}. We compare our approach to several competitors on synthetic and real data in Section~\ref{sec:numerical_exp:sim} and Section~\ref{sec:numerical_exp:data}. Section~\ref{sec:conclusion} concludes with a discussion of several research perspectives. 
Mathematical proofs and additional finite-sample results are postponed to the online Supplementary Material document.}

\cbend

\section{Statistical framework and main results}
\label{sec:main}
\subsection{Background}
\label{sec:main:back}

\cbstart

Let $Y\in \{0,1\}$ be a binary response variable with random covariate $X\in \mathbb R^p$. Let, for an $x$ {in the support $S_X$ of the distribution $P_X$} of $X$ (assumed to have nonempty interior),  
$
\pi(x) = \mathbb{P} (Y = 1 | X=x). 
$
The cross-entropy function $H:[0,1]\times [0,1]\mapsto [0,\infty]$, defined for any $q_1$ and $q_2$ in $[0,1]$ by 
\[
H(q_1,q_2)=-q_1 \log(q_2) - (1-q_1) \log(1-q_2), 
\]
with the convention $0\log 0=0$, is minimal if and only if $q_1=q_2$. As a consequence, the integrated cross-entropy
\[
\mathcal H (\pi, q) = \int_{z\in S_X} H(\pi(z),q(z)) P_X (\diff z), 
\]
viewed as a function of the map $z\in S_X\mapsto q(z)\in[0,1]$, is minimal if and only if $\pi = q$ on $S_X$. Note that, while $H$ acts on numbers in $[0,1]$, $\mathcal H$ acts on functions defined on $S_X$ and valued in $[0,1]$. Let $\varepsilon >0$, {denote by $B(x,\varepsilon)$ the closed ball with center $x$ and radius $\varepsilon$ for the Euclidean norm on $\mathbb{R}^p$,} and consider the localized version of the integrated cross-entropy, namely
\[
\mathcal H _{x,\varepsilon} (\pi, q) = \frac{ \int_{z\in B(x,\varepsilon) }  H(\pi(z),q(z)) P_X (\diff z)}{ \int_{z\in B(x,\varepsilon) }  P_X (\diff z)}.
\]
This version interpolates between $ H (\pi(x),q(x) ) $ and $ \mathcal H (\pi, q)$, as these two quantities are recovered when $\varepsilon \to 0$ and $\varepsilon \to \infty$, respectively.

The approach taken in this paper consists in minimizing 
an estimate of $\mathcal H _{x,\varepsilon} (\pi, q)$, when $\varepsilon $ is small and $\logit q$ is locally linear. The intuition is as follows. 
If one sets 
$
q_{\alpha, \beta} (z) = \expit (\alpha + \beta^T (z-x)),
$
and considers minimizing $\mathcal H _{x,\varepsilon} (\pi, q_{\alpha,\beta})$ with respect to $(\alpha, \beta)\in \mathbb R \times \mathbb R^p$, then the minimizers $(\alpha, \beta) $ will be close to $(\ell (x), \nabla \ell (x))$ as $\varepsilon$ is getting small. This 
is justified by a Taylor expansion of $\ell $ around $x$, \textit{i.e.}, $\pi (z) \simeq  \expit( \ell (z) + \nabla \ell (x)  ^T (z- x)) $ when $z \simeq x$.

Our goal, therefore, is to minimize an estimate of $\mathcal H _{x,\varepsilon} (\pi, q_{\alpha, \beta})$ with respect to $(\alpha, \beta) \in \mathbb R  \times \mathbb R^p$. To construct our estimator, one can recognize that 
\begin{align*}
\mathcal H _{x,\varepsilon} (\pi, q_{\alpha, \beta}) 
& =\frac{ \mathbb E [ (-Y\log(q_{\alpha, \beta}(X)) -(1-Y) \log(1-q_{\alpha, \beta}(X)) ) \ind _ {B(x,\varepsilon) }(X) ] }{\mathbb{P} (X\in B(x,\varepsilon) )}.
\end{align*}
The above expression allows to easily define an estimator of
$\mathcal H _{x,\varepsilon} (\pi, q_{\alpha, \beta})$ by replacing the expectation with a sample average. Our approach, described in the subsequent section, relies on nearest-neighbor localization, which makes it possible to adapt the value of $\varepsilon$ to regions having different density values for $X$.

\cbend

\subsection{Nearest-neighbor penalized local logistic estimator}
\label{sec:main:estimator}

\cbstart Let $(Y_i,X_i)_{1\leq i\leq n}$ be a collection of independent random variables with the same distribution as $(Y,X)$. \cbend Fix $x\in \mathbb{R}^p$ and let $N_k(x)\subset \{1,\ldots,n\}$ be the set gathering the indices of the $k$-nearest neighbors $X_i$ of the point $x$; we shall assume in the theory below that the distribution of $X$ has a density w.r.t.~Lebesgue measure, so that ties will not happen with probability 1 and $N_k(x)$ is well-defined. \cbstart The empirical nearest-neighbor counterpart of the local integrated cross-entropy 
{$\mathcal{H}_{x,\varepsilon}(\pi,q_{\alpha, \beta})$} is then \cbend
\[
-\frac{1}{k} \sum_{i \in N_k(x)} Y_i \log( \expit(a + b^T (X_i-x)) ) +  (1-Y_i) \log( 1-\expit(a + b^T (X_i-x)) ). 
\]
Introducing the LASSO penalty $\lambda \| b \|_1$, where $\| \cdot \|_1$ is the $\ell_1-$norm on $\mathbb{R}^p$, and rescaling, we naturally obtain a nearest-neighbor, penalized, local logistic estimator \cbstart of $\logit \pi(x)$ and its gradient as \cbend
\begin{equation}
\label{eqn:penNNlogit}
(\widehat{a}_n(x), \widehat{b}_n(x) ) = \argmax_{(a,b)\in \mathbb{R}\times \mathbb{R}^p} \{ L_n(a,b) - \lambda \| b \| _1 \}
\end{equation}
with 
\begin{align}
\nonumber
L_n(a,b) 
    &= \sum_{i \in N_k(x)} \big\{ Y_i \log( \expit(a + b^T (X_i-x)) ) \\& \qquad  +  (1- Y_i) \log( 1-\expit(a + b^T (X_i-x)) ) \big\} \nonumber\\ 
\label{eqn:penNNlogit_GLMform}
    &= \sum_{i \in N_k(x)} Y_i (a + b^T (X_i-x)) - \log( 1+\exp(a + b^T (X_i-x)) ).
\end{align}
Our main theoretical result is that one can obtain the asymptotic distribution of this pair of estimators under very weak assumptions on the distribution of $X$ and the conditional distribution of $Y|X=x$. We spell out these \cbstart assumptions \cbend and their interpretation below.
\begin{enumerate}[label=(A\arabic*), wide=0.5em, leftmargin=*]
    \item \label{cond:new1} The distribution of $X$ has a continuous density $f_X$ with respect to the Lebesgue measure on $\mathbb{R}^p$ and $f_X(x)>0$.  
    \item \label{cond:new2} 
    The function $\pi : \mathbb{R} ^p \to [0,1]$ is twice differentiable with continuous second order derivatives at $x$ and such that $\pi(x) \in (0,1)$.
\end{enumerate}
Assumption \ref{cond:new1} ensures that there are enough points around $x$ for the nearest-neighbor procedure to work. It also ensures the good probabilistic behavior of the bandwidth
\[
\widehat{\tau}_{n,k}(x) := \inf\left\{ \tau\geq 0:  \sum_{i=1}^n \ind_{ B(x,\tau) }(X_i) \geq k \right\}
\]
corresponding to the smallest radius $\tau \geq 0$ such that the ball $B(x,\tau)$ contains at least $k$ points from the sample. Actually, the fact that $X$ has a continuous distribution w.r.t.~Lebesgue measure yields 
\begin{align*}
    L_n(a,b) 
    &= \sum_{i=1}^n \big\{ Y_i \log( \expit(a + b^T (X_i-x)) ) \\ & + (1- Y_i) \log( 1-\expit(a + b^T (X_i-x)) ) \big\} \ind_{B(x,\widehat{\tau}_{n,k}(x))}(X_i).
\end{align*}
It also turns out that if $k=k_n\to\infty$ with $k/n\to 0$, then under~\ref{cond:new1}, $\widehat{\tau}_{n,k}(x)/\tau_{n,k}(x)\to 1$ in probability, where, if $V_p$ denotes the volume of the Euclidean unit ball in $\mathbb{R}^p$, 
\[
\tau_{n,k}(x) = \left(  \frac k n \frac 1 {f_X(x) V_p} \right) ^{1/p}.
\]
See Lemma~1 in~\cite{portier2021nearest}. It is then reasonable to write, for $n$ large enough, 
\begin{align*}
    L_n(a,b)\approx \overline{L}_n(a,b) 
    &= \sum_{i=1}^n \big\{ Y_i \log( \expit(a + b^T (X_i-x)) ) \\ &+  (1- Y_i)\log( 1-\expit(a + b^T (X_i-x)) ) \big\} \ind_{B(x,\tau_{n,k}(x))}(X_i). 
\end{align*}
The asymptotic behavior of $\overline{L}_n(a,b)$ is much easier to study than that of $L_n(a,b)$, since it is a sum of independent and identically distributed random variables; like $L_n(a,b)$, it defines a concave objective function and therefore one should expect that (up to technical details) the asymptotic behavior of $(\widehat{a}_n(x),\widehat{b}_n(x))$ will follow from the pointwise convergence of $\overline{L}_n(a,b)$. \cbstart The key result in order to make this intuition rigorous is a functional central limit theorem for nearest-neighbor estimators, which is of independent interest, and it is the first main result of this paper. Here and throughout $\| \cdot \|_2$ denotes the standard Euclidean norm.
\begin{theorem}[Central limit theorem for nearest-neighbor estimators] 
\label{theo:tightness}
Let $E$ be a nonempty and finite set. Assume that the data is made of independent copies  $(Y_i,X_i)_{1\leq i\leq n}$  of the random pair $(Y,X) \in E\times \mathbb R^p$ and that \ref{cond:new1} is fulfilled. Assume that $k:=k_n \to \infty$ is such that $k/n \to 0$. Let $\Psi_n : E \times \mathbb R^p \to \mathbb R^q$ be a sequence of measurable vector-valued functions and suppose that 
there is a positive integer $n_0$ such that 
\[
\sup_{n\geq n_0} \, \sup_{z\in A_{n,k}(x)} \, \max_{y\in E} \| \Psi_n(y,z)\|_2 <\infty,
\]
where $A_{n,k}(x) = B(x,(3/2)^{1/p}\tau_{n,k}(x))$. Define a centered and c\`adl\`ag stochastic process $(Z_n(\tau))_{\tau>0} $ by
\[
Z_n ( \tau ) = \frac{1}{\sqrt{k}} \sum_{i=1}^n \left\{\Psi_n (Y_i,X_i) \ind_{B(x,\tau )}(X_i) - \mathbb E [\Psi_n (Y,X) \ind_{B(x,\tau )} (X) ]  \right\}.
\]
\begin{enumerate}[label=(\roman*)]
\item Then $Z_n( \widehat{\tau}_{n,k}(x) ) = O_{\mathbb{P}}(1)$. 
\item Let $\Sigma_n^2(X) = \mathbb{E}[ \Psi_n (Y, X)\Psi_n (Y, X)^T | X  ] $. If moreover there is a (positive semidefinite) matrix-valued function $t\mapsto \Sigma^2(t,x)$ such that 
\[
\forall t\in [1/2,3/2], \ \int_{B(0,1)} \Sigma_n^2( x + \tau_{n,k}(x) t^{1/p} v ) \diff v \to V_p \Sigma^2(t,x), 
\]
then 
\[
Z_n ( \widehat{\tau}_{n,k}(x) ) = Z_n ( \tau_{n,k}(x) ) + o_{\mathbb{P}}(1) \stackrel{\mathrm{d}}{\longrightarrow} \mathcal{N}(0, \Sigma^2(1,x)).
\]
\end{enumerate}
\end{theorem}

Theorem~\ref{theo:tightness}(ii) is obtained by, first, showing that the empirical process $(Z_n(\tau))$ is tight and converges weakly to a continuous Gaussian process within a suitable functional space 
when the radius $\tau$ is restricted to being of similar order as the nearest-neighbor radius $\tau_{n,k}(x)$, and then by combining that with the fact that $\widehat{\tau}_{n,k}(x)/\tau_{n,k}(x)\to 1$ in probability. Let us highlight that, while the focus of Theorem~\ref{theo:tightness} is the sequence of random variables
\[
\sum_{i=1}^n \Psi_n(Y_i,X_i)\mathbf{1}_{B(x,\hat{\tau}_{n,k}(x))}(X_i),
\]
the results in \cite{portier2021nearest} focus on sequences of stochastic processes of the form
\[
\sum_{i=1}^n f(Y_i,X_i)\mathbf{1}_{B(x,\hat{\tau}_{n,k}(x))}(X_i),
\]
indexed by functions $f$ in a suitable family of functions $\mathcal{F}$. The fact that $\Psi_n$ varies with $n$ is the key reason why Theorem~\ref{theo:tightness} does not follow in a straightforward manner from the results of \cite{portier2021nearest}. See Section~\ref{sec:proofs:tightness} in the Supplementary Material document for full details of the proof. Besides, we discuss in Section~\ref{sec:extension} how Theorem~\ref{theo:tightness} applies to the obtention of asymptotic theory for a much larger class of estimators in Generalized Linear Models for categorical data. \cbend

Because of the LASSO penalty term in \eqref{eqn:penNNlogit}, the asymptotic distribution of $\widehat{b}_n(x) - \nabla\ell(x) $ will depend on the 
\cbstart local \cbend active set associated to $\nabla\ell(x)$, defined as the set of indices $j$ such that $\nabla\ell_j(x) \neq 0$. This is in line with the asymptotic distribution obtained for the standard least squares LASSO regression estimator, see \cite{fu2000asymptotics} and~\cite{zou2006adaptive}. Let, for any real number $t$, the quantity $\mathrm{sgn}(t) = \ind_{[0,\infty)}(t)-\ind_{(-\infty,0)}(t)$ be the sign of $t$, that is, $\mathrm{sgn}(t)=1$ when $t\geq 0$ and $-1$ otherwise. Finally, define 
\[
\Gamma(x) =  \pi(x)(1-\pi(x)) \begin{pmatrix} 1 & 0_p^T \\ 0_p & \frac{1}{p+2} I_p \end{pmatrix}
\]
where $0_p$ is the zero vector in $\mathbb{R}^p$ and $I_p$ denotes the identity matrix of order $p$. Our \cbstart next \cbend main result provides the limiting distribution of the pair $(\widehat{a}_n(x),\widehat{b}_n(x))$. In this result, Assumption~\ref{cond:new2} ensures that the gradient of $\ell = \logit \pi$ is well-defined, with the second order derivatives of $\pi$ and hence of $\ell$ coming into play when 
evaluating the bias term incurred when localizing. Denote by $\Delta \pi(x)$ the Laplacian of $\pi$ \cbstart at $x$. \cbend 
\begin{theorem}[Convergence of nearest-neighbor penalized local logistic regression estimators] 
\label{theo:main}
Suppose that \ref{cond:new1} and \ref{cond:new2} are fulfilled. If $k:=k_n  \to \infty $ and  $\lambda : = \lambda_n $ are such that 
$k^{1+p/2}/n\to\infty$, $k^{1+p/4}/n$ is bounded and $n \lambda^p / k^{1+p/2} \to c\in [0,\infty)$ then we have
\begin{align*}
    & \begin{pmatrix}
        \sqrt{k}(\widehat{a}_n(x)  - \ell (x)) \\ 
        \tau_{n,k}(x) \sqrt{k} (\widehat{b}_n(x) - \nabla \ell (x))
    \end{pmatrix}\\  
    &\stackrel{\mathrm{d}}{=} \argmax_{u=(u_0,u_1,\ldots,u_p)^T\in \mathbb{R}^{p+1}} \Bigg\{ u^T (W_n(x) + T_n(x)) - \frac 1 2 u^T\Gamma(x) u \\
    &- ( c f_X(x) V_p )^{1/p} \left(  \sum_{j=1}^p \mathrm {sgn}(\nabla \ell_j (x)) u_j \ind_{\{ \nabla \ell_j (x) \neq 0 \}} + |u_j| \ind_{\{ \nabla \ell_j (x) = 0 \}} \right) \Bigg\} + o_{\mathbb{P}}(1)
\end{align*}
with $W_n(x) \stackrel{\mathrm{d}}{\longrightarrow} \mathcal{N}(0,\Gamma(x)) $ and 
\begin{align*}
&T_n(x) = \\
&\tau_{n,k}^2(x) \sqrt{k} \left( \frac{1}{2(p+2)} \left( \Delta \pi(x) - \frac{1-2\pi(x)}{\pi(x)(1-\pi(x))} \| \nabla \pi(x) \|_2^2 \right) \begin{pmatrix} 1 \\ 0_p \end{pmatrix} + o_{\mathbb{P}}(1) \right).
\end{align*}
\end{theorem}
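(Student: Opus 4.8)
The overall strategy is the classical "argmax theorem" approach for M-estimators with a convex objective, in the tradition of \cite{fu2000asymptotics} and \cite{zou2006adaptive}, combined with the nearest-neighbor machinery summarized in the excerpt. First I would reparametrize: write $a = \ell(x) + u_0/\sqrt{k}$ and $b = \nabla\ell(x) + u_1/(\tau_{n,k}(x)\sqrt{k})$, so that the scaled deviation $u = (u_0, u_1^T)^T \in \mathbb{R}^{p+1}$ becomes the new variable. Define the localized, recentered, rescaled criterion
\[
\Lambda_n(u) = \big[ L_n\big(\ell(x) + u_0/\sqrt{k},\, \nabla\ell(x) + u_1/(\tau_{n,k}(x)\sqrt{k})\big) - L_n(\ell(x), \nabla\ell(x)) \big] - \lambda\big( \| \nabla\ell(x) + u_1/(\tau_{n,k}(x)\sqrt{k}) \|_1 - \| \nabla\ell(x) \|_1 \big),
\]
which is concave in $u$ and whose argmax is exactly the left-hand side of the stated display. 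The goal is to show $\Lambda_n$ converges (in the relevant sense) to the concave limit criterion
\[
\Lambda(u) = u^T(W_n(x) + T_n(x)) - \tfrac12 u^T \Gamma(x) u - (cf_X(x)V_p)^{1/p} \sum_{j=1}^p \big( \mathrm{sgn}(\nabla\ell_j(x)) u_j \ind_{\{\nabla\ell_j(x)\neq 0\}} + |u_j|\,\ind_{\{\nabla\ell_j(x) = 0\}} \big),
\]
plus an $o_{\mathbb{P}}(1)$ term, and then invoke the convexity/argmax lemma (e.g.\ the result of \cite{fu2000asymptotics}, or Theorem~\ref{theo:tightness} from the Supplementary material) to transfer convergence of the criterion to convergence of the argmax. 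Note $W_n(x) + T_n(x)$ and $T_n(x)$ are themselves allowed to be $n$-dependent random/deterministic sequences here, so "convergence" is really finite-dimensional convergence of $\Lambda_n(u) - \Lambda(u)$ to $0$ in probability for each fixed $u$, uniformly on compacts by concavity.

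The analysis of $\Lambda_n$ splits into two independent pieces. The penalty piece is elementary: for each coordinate $j$, $\lambda\big( |\nabla\ell_j(x) + u_j/(\tau_{n,k}(x)\sqrt{k})| - |\nabla\ell_j(x)| \big)$ equals $\lambda u_j \,\mathrm{sgn}(\nabla\ell_j(x))/(\tau_{n,k}(x)\sqrt{k})$ eventually when $\nabla\ell_j(x)\neq 0$ (since the perturbation is $o(1)$ and cannot flip the sign), and equals $\lambda |u_j|/(\tau_{n,k}(x)\sqrt{k})$ exactly when $\nabla\ell_j(x)=0$; the scaling assumption $n\lambda^p/k^{1+p/2}\to c$ together with $\tau_{n,k}(x)\sqrt{k} = (k^{1+p/2}/(n f_X(x)V_p))^{1/p}$ makes $\lambda/(\tau_{n,k}(x)\sqrt{k}) \to (cf_X(x)V_p)^{1/p}$, giving exactly the penalty term in $\Lambda$. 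For the likelihood piece, I would first replace $L_n$ by $\overline{L}_n$ (the version with the deterministic ball $B(x,\tau_{n,k}(x))$ in place of the data-dependent $B(x,\widehat\tau_{n,k}(x))$) using the approximation discussed before the theorem, which relies on $\widehat\tau_{n,k}(x)/\tau_{n,k}(x)\to 1$ from Lemma~1 of \cite{portier2021nearest}; controlling the difference $L_n - \overline{L}_n$ is the price one pays for using genuine nearest neighbors, and this is where assumption~\ref{cond:new1} (continuous positive density) is used. Then $\overline{L}_n$ is a sum of i.i.d.\ terms, and a second-order Taylor expansion of the logistic log-likelihood in $(a,b)$ around $(\ell(x),\nabla\ell(x))$ produces: a linear (score) term whose conditional-on-covariates mean vanishes and which, after CLT-type normalization over the shrinking ball of radius $\tau_{n,k}(x)$, yields $u^T W_n(x)$ with $W_n(x)\dto\mathcal{N}(0,\Gamma(x))$; a deterministic bias term coming from the fact that $\ell$ is only approximately linear on $B(x,\tau_{n,k}(x))$ — here the twice-differentiability in~\ref{cond:new2} enters, and a direct computation of $\mathbb{E}[(X-x)(X-x)^T \mid X\in B(x,\tau)]$ and the second-order Taylor remainder of $\pi$ (note $\nabla\ell = \nabla\pi/(\pi(1-\pi))$ and the chain rule produces both the $\Delta\pi(x)$ and the $\|\nabla\pi(x)\|_2^2$ contributions) gives the stated $T_n(x)$; and a quadratic term whose coefficient matrix converges to the Hessian of the population localized cross-entropy, which after the same normalization is precisely $\Gamma(x)$ — the block-diagonal structure and the factor $1/(p+2)$ coming from averaging $(X-x)(X-x)^T$ over a uniform-like distribution on a Euclidean ball. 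The remainder terms in the Taylor expansion are $o_{\mathbb{P}}(1)$ after scaling because of the moment bounds $k^{1+p/2}/n\to\infty$ and $k^{1+p/4}/n$ bounded, which are exactly calibrated so the bias does not dominate and the third-order remainder is negligible.

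The main obstacle, as flagged in the text, is making rigorous the passage from $L_n$ to $\overline{L}_n$ and, more fundamentally, establishing the needed tightness/uniform-convergence of the recentered criterion — this is precisely what Theorem~\ref{theo:tightness} is designed to handle, and it is the only nontrivial ingredient we would simply cite. Everything else (the penalty limit, the bias computation, the variance/Hessian computation, the final argmax step) is routine once that empirical-process result is in place. Concretely, once one has $\Lambda_n(u) = \Lambda(u) + o_{\mathbb{P}}(1)$ pointwise in $u$ with $\Lambda$ strictly concave (its Hessian $\Gamma(x)$ is positive definite since $\pi(x)\in(0,1)$), the argmaxes converge in distribution, and since $\Lambda$ has a unique maximizer for each realization of $W_n(x)$, the distributional identity in the statement — with the $o_{\mathbb{P}}(1)$ absorbing the approximation errors — follows directly. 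I would organize the write-up as: (i) reparametrization and statement of the target criterion; (ii) penalty term limit; (iii) reduction $L_n \rightsquigarrow \overline{L}_n$ via the bandwidth ratio; (iv) Taylor expansion of $\overline{L}_n$, identifying score, bias $T_n(x)$, and Hessian $\Gamma(x)$, with the scaling assumptions ensuring remainders vanish; (v) invocation of the convexity argmax lemma / Theorem~\ref{theo:tightness} to conclude.
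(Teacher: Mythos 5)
Your proposal is correct and follows essentially the same route as the paper: reparametrize by the $(1/\sqrt{k},\,1/(\tau_{n,k}(x)\sqrt{k}))$ scaling, compute the elementary limit of the $\ell_1$-penalty increment using $\lambda/(\tau_{n,k}(x)\sqrt{k})\to(cf_X(x)V_p)^{1/p}$, Taylor-expand the local log-likelihood to identify the score $W_n(x)+T_n(x)$, the bias $T_n(x)$ and the Hessian limit $-\Gamma(x)$ via the nearest-neighbor empirical-process result (Theorem~\ref{theo:tightness}), and conclude with a Hjort--Pollard-type convexity argmax lemma. The only cosmetic difference is that you phrase the localization step as an explicit substitution $L_n\rightsquigarrow\overline{L}_n$, whereas the paper keeps the random radius $\widehat{\tau}_{n,k}(x)$ throughout and absorbs it into the functional tightness of $Z_n(t^{1/p}\tau_{n,k}(x))$ over $t\in[1/2,3/2]$ together with $\widehat{\tau}_{n,k}(x)/\tau_{n,k}(x)\to 1$.
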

The conditions on $k$ and $\lambda$ in Theorem~\ref{theo:main} are equivalent to assuming that $\tau_{n,k}(x) \sqrt{k}\to\infty$, $\tau_{n,k}^2(x) \sqrt{k}$ is bounded and $\lambda / (\tau_{n,k}(x) \sqrt k)$ converges to the finite constant $( c f_X(x) V_p )^{1/p}$. Condition $\tau_{n,k}(x) \sqrt{k}\to\infty$ is necessary in order to be able to write a Taylor expansion of the penalized component in the loss function (this is not needed for the analysis of the non-penalized local logistic maximum likelihood function, see
Lemma~\ref{lem:Rn}).
In pointwise results on local linear kernel quasi-maximum likelihood estimation with one-dimensional covariates, Theorem~1a in~\cite{MR1325121} requires $\sqrt{nh^3} = h\sqrt{nh}\to\infty$, where $h$ is the kernel bandwidth; note that $h$ and $\tau_{n,k}(x)$ play the same role and that, for $p=1$, the kernel regression analogue of $k$ is a quantity proportional to $nh$, so that conditions $h\sqrt{nh}\to\infty$ and $\tau_{n,k}(x) \sqrt{k}\to\infty$ are indeed analogous, and constitute the assumptions required in order to ensure consistency of the local linear estimators of $\ell$ and its gradient. Of course, condition $\tau_{n,k}(x) \sqrt{k}\to\infty$ is automatically satisfied if $\tau_{n,k}^2(x) \sqrt{k}$ converges to a finite positive limit, namely, when the nonparametric bias-variance tradeoff is achieved and the optimal rate of convergence of the local linear estimator is found. 
\cbstart Condition $k/n\to 0$, which is standard in nearest-neighbor estimation, follows from assuming that $k\to \infty$ and $\tau_{n,k}^2(x) \sqrt{k}$ is bounded. Also note that Assumption (A1) and the condition that $\pi$ is twice continuously differentiable, appearing in Assumption (A2), are contained in the statement of Theorem~3 in~\cite{MR1325121}. Condition $\pi(x)\in (0,1)$, part of Assumption (A2), is contained in the requirement that the conditional variance is nonzero in Condition (3) in~\cite{MR1325121}. \cbend

It follows that, under Assumptions \ref{cond:new1} and \ref{cond:new2} plus classical conditions linking the number of nearest neighbors and the penalizing constant $\lambda$, the two estimators $\widehat{a}_n(x)$ and $\widehat{b}_n(x)$ converge respectively at the rate $1/\sqrt{k}$ and $1/(\tau_{n,k}(x) \sqrt{k})$. \cbstart Since $\tau_{n,k}(x)$ is proportional to $(k/n)^{1/p}$, the rate for estimation of the probability is $1/\sqrt k + (k/n)^{2/p}$, while the rate for the estimation of the gradient is $(k/n)^{-1/p}(1/\sqrt k + (k/n)^{2/p})$. Interestingly, the estimation of the probability behaves similarly as that of classical $k$-NN (see Section 14.3 in \cite{biau2015lectures}), meaning that local linear estimation does not improve the rate of convergence when estimating a twice differentiable function. This is because the standard $k$-NN method is already optimal for twice differentiable functions, due to the even-symmetry of the localizing kernel, that is, $\ind_{B(x,\hat \tau _{n,k}(x))}(x+u) = \ind_{B(x,\hat \tau _{n,k}(x))}(x-u)$. A precise study of the constants involved in the limiting results, and of the behavior of local linear estimation at the boundary, could highlight further differences, but this is beyond the scope of our paper. \cbend 

Note that for non-penalized local logistic regression ($\lambda = 0$ and thus $c=0$), the result of Theorem~\ref{theo:main} is just 
\[
\begin{pmatrix} \sqrt{k}(\widehat{a}_n(x)  - \ell (x)) \\ 
    \tau_{n,k}(x) \sqrt{k} (\widehat{b}_n(x) - \nabla \ell (x))
\end{pmatrix}
\stackrel{\mathrm{d}}{=} \mathcal{N}( 0, \Gamma^{-1}(x) )   +  O_{\mathbb{P}} ( \tau_{n,k}^2(x) \sqrt{k} ) + o_{\mathbb{P}}(1).
\]
In particular, if $k \tau_{n,k}^4(x) \to 0$, a straightforward application of the delta-method yields 
\[
\sqrt{k} (\expit(\widehat{a}_n(x)) - \pi(x))\stackrel{\mathrm{d}}{\longrightarrow} \mathcal{N}\left( 0, \pi(x)(1-\pi(x)) \right)
\]
as expected from standard maximum likelihood theory when the logistic regression model is valid. Condition $k \tau_{n,k}^4(x) \to 0$, which makes the bias term $T_n(x)$ vanish asymptotically, again has a straightforward analogue in local linear kernel quasi-maximum likelihood estimation; for $p=1$, the corresponding condition is $n h^5\to 0$, which is exactly the bias condition necessary to eliminate the smoothing bias term in Theorem~1a in~\cite{MR1325121}. 

\cbstart In the penalized case, there is an interesting parallel with fixed-dimension LASSO regression, considered in~\citet[Theorem~2]{fu2000asymptotics}. Observe that, since the object of interest here is the gradient $\nabla \ell (x)$, the speed of convergence $\tau_{n,k}(x)\sqrt{k}$ should be seen as the analogue of the speed of convergence $\sqrt{n}$ in LASSO regression. Then, setting $D(x)=( c f_X(x) V_p )^{1/p}$, condition  $n \lambda^p / k^{1+p/2} \to c$ is equivalent to $\lambda / (\tau_{n,k}(x) \sqrt k)\to D(x)$, which plays the same role as condition $\lambda_n/\sqrt{n}\to \lambda_0$ in Theorem~2 of~\cite{fu2000asymptotics}. If furthermore $k \tau_{n,k}^4(x) \to 0$, then the weak limit of Theorem~\ref{theo:main} is exactly 
\begin{align*}
    &\argmin_{u=(u_0,u_1,\ldots,u_p)^T\in \mathbb{R}^{p+1}} \Bigg\{ -2 u^T W(x) + u^T\Gamma(x) u \\
    &+ 2 D(x) \left(  \sum_{j=1}^p u_j \mathrm {sgn}(\nabla \ell_j (x)) \ind_{\{ \nabla \ell_j (x) \neq 0 \}} + |u_j| \ind_{\{ \nabla \ell_j (x) = 0 \}} \right) \Bigg\} 
\end{align*}
where $W(x)$ has a $\mathcal{N}(0,\Gamma(x))$ distribution. This is an obvious analogue of the weak limit in Theorem~2 of~\cite{fu2000asymptotics} (note the constant 2 in front of the constant $D(x)$, due to~\cite{fu2000asymptotics} using a different normalization from ours). In particular, if some of the $\nabla \ell_j (x)$ are 0, the limiting distribution will put positive probability at the origin, as the calculation on pp.1361-1362 of~\cite{fu2000asymptotics} shows. This indicates that the estimator we consider is able to perform variable selection. \cbend

Another immediate corollary of Theorem~\ref{theo:main} can also be given on the estimation rate of the gradient $\nabla \ell (x)$ by $\widehat{b}_n(x)$. 
\begin{corollary}[Rate of convergence of the gradient estimator] 
\label{coro:main}
Under the conditions of Theorem~\ref{theo:main}, $\widehat{b}_n(x)$ is a consistent estimator of $\nabla \ell (x)$, and 
\[
\widehat{b}_n(x) - \nabla \ell (x)  = O_{\mathbb{P}} \left( \frac{1}{\tau_{n,k}(x) \sqrt{k}} \right)  +  O_{\mathbb{P}} (  \tau_{n,k}(x) ).
\]
\end{corollary}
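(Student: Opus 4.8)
The plan is to read the rate straight off the distributional identity in Theorem~\ref{theo:main}. Let $\widehat u_n=\argmax_{u\in\reals^{p+1}}\Phi_n(u)$ be the maximiser appearing on its right-hand side, where
\[
\Phi_n(u)=u^T(W_n(x)+T_n(x))-\tfrac12 u^T\Gamma(x)u-p_n(u)
\]
and $p_n$ is the (nonsmooth, sublinear) LASSO term from the theorem statement, equal to $(cf_X(x)V_p)^{1/p}$ times a sum of $p$ terms each of modulus at most $|u_j|$. Since $O_{\mathbb{P}}$-boundedness is preserved under equality in distribution, it is enough to bound the stochastic order of $\widehat u_n$ and then rescale by $\tau_{n,k}(x)\sqrt k$.

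Next I would collect three facts. (i) Under~\ref{cond:new2} we have $\pi(x)\in(0,1)$, so $\Gamma(x)$ is positive definite with smallest eigenvalue $\gamma_{\min}=\pi(x)(1-\pi(x))/(p+2)>0$. (ii) $W_n(x)=O_{\mathbb{P}}(1)$ because $W_n(x)\dto\mathcal N(0,\Gamma(x))$, and $\|T_n(x)\|_2=O_{\mathbb{P}}(\tau_{n,k}^2(x)\sqrt k)$ because the scalar prefactor $\tau_{n,k}^2(x)\sqrt k$ is bounded under the stated assumptions (equivalently, $k^{1+p/4}/n$ bounded). (iii) $|p_n(u)|\le (cf_X(x)V_p)^{1/p}\|u\|_1\le C_0\|u\|_2$ with $C_0:=(cf_X(x)V_p)^{1/p}\sqrt p$. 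Since $\Phi_n(0)=0$ and $\widehat u_n$ maximises $\Phi_n$, combining (i)--(iii) gives
\[
0\le\Phi_n(\widehat u_n)\le \big(\|W_n(x)+T_n(x)\|_2+C_0\big)\|\widehat u_n\|_2-\tfrac{\gamma_{\min}}{2}\|\widehat u_n\|_2^2,
\]
whence $\|\widehat u_n\|_2\le 2\gamma_{\min}^{-1}\big(\|W_n(x)+T_n(x)\|_2+C_0\big)=O_{\mathbb{P}}\big(1+\tau_{n,k}^2(x)\sqrt k\big)$.

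By Theorem~\ref{theo:main}, the vector $\tau_{n,k}(x)\sqrt k(\widehat b_n(x)-\nabla\ell(x))$ has the same law as the sub-vector formed by the last $p$ coordinates of $\widehat u_n$ up to an additive $o_{\mathbb{P}}(1)$, hence is itself $O_{\mathbb{P}}(1+\tau_{n,k}^2(x)\sqrt k)$; dividing by $\tau_{n,k}(x)\sqrt k>0$ gives
\[
\widehat b_n(x)-\nabla\ell(x)=O_{\mathbb{P}}\!\left(\frac{1}{\tau_{n,k}(x)\sqrt k}\right)+O_{\mathbb{P}}\big(\tau_{n,k}(x)\big),
\]
which is the announced rate. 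Consistency is then immediate: $\tau_{n,k}(x)\sqrt k\to\infty$ (equivalently $k^{1+p/2}/n\to\infty$) kills the first term, and $k/n\to 0$ — which follows from $k\to\infty$ and $\tau_{n,k}^2(x)\sqrt k$ bounded — forces $\tau_{n,k}(x)=(k/(nf_X(x)V_p))^{1/p}\to0$ and kills the second.

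There is no serious obstacle here, since Theorem~\ref{theo:main} carries the entire analytic burden; the only points needing a little care are the coercivity estimate for $\Phi_n$ (which turns an argmax into an $O_{\mathbb{P}}$ bound and genuinely uses positive definiteness of $\Gamma(x)$, hence~\ref{cond:new2}) and the bookkeeping observation that, after rescaling by $\tau_{n,k}(x)\sqrt k$, the fluctuation term $W_n(x)$ yields the $1/(\tau_{n,k}(x)\sqrt k)$ rate while the bias term $T_n(x)$ yields the separate $\tau_{n,k}(x)$ rate.
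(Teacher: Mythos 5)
Your proof is correct and follows the route the paper intends: Corollary~\ref{coro:main} is stated as an immediate consequence of Theorem~\ref{theo:main}, and you supply exactly the missing bookkeeping, namely an $O_{\mathbb{P}}$ bound on the argmax via coercivity of the strongly concave objective (positive definiteness of $\Gamma(x)$ plus the sublinear bound on the penalty), followed by transfer through the distributional identity and rescaling by $\tau_{n,k}(x)\sqrt{k}$. The only remark worth making is that your hand-made coercivity estimate duplicates what the paper's Lemma~\ref{lem:pollard}(i) already provides (boundedness in probability of the minimizer of $u\mapsto u^TS_n+F(u)$ for $F$ strongly convex and $S_n=O_{\mathbb{P}}(1)$), though your explicit version has the small advantage of tracking the separate contributions of $W_n(x)$ and $T_n(x)$, which is what cleanly yields the two terms $O_{\mathbb{P}}(1/(\tau_{n,k}(x)\sqrt{k}))$ and $O_{\mathbb{P}}(\tau_{n,k}(x))$ in the stated rate.
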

%
The best achievable rate of convergence, obtained for $ k = n ^{ 4 / (p + 4) }$, is $  n ^{ - 1 / (p + 4) }  $ for the estimation of the gradient vector. This rate is optimal, see~\cite{stone1982optimal}, and is better than the rate $n^{-1/(6(p+4))}$ obtained in \cite{kanshi2022}. A similar (non-asymptotic) bound is obtained in \cite{ausset2021nearest} in the simpler case of a local least squares estimator of the gradient which is not appropriate in the classification problem. 

The above results only require 
regularity conditions at the point $x$. As a consequence, the best that can be hoped for is 
a pointwise asymptotic convergence result in the spirit of Theorem~\ref{theo:main}. Stronger results, such as uniform convergence results for $\widehat{b}_n(x)$ over compact subsets of the support of $X$, could be obtained under much more restrictive conditions. Let us \cbstart finally \cbend point out that from the estimator $(\widehat a_n(x), \widehat b_n(x) )$, one can easily construct, using the standard plug-in rule, an estimator of $(\pi(x), \nabla \pi(x) )$. This estimator will of course inherit the good statistical properties of $(\widehat a_n(x), \widehat b_n(x) )$ established before.

\subsection{Extension to multi-class generalized linear models}
\label{sec:extension}

Our general asymptotic theory also applies to much more general settings where $Y$ represents a class label chosen among a finite number of classes, and in generalized linear model (GLM)-type specifications using link functions that may not be equal to the simple logit link. We outline here how this may be done under our framework but omit the technical details for the sake of brevity.

Suppose for the moment that $Y\in E$ is a response variable living in the finite set $E=\{0,1,\ldots,N\}$, with $N\geq 1$, and that the conditional distribution of $Y$ given $X=x$ is part of the one-parameter exponential family, say with conditional probability mass function $y\mapsto h(y) \exp(y \theta(x) - C(\theta(x)))$, where $\theta$ is a univariate parameter mapping, $C$ is a known smooth, strictly monotonic and convex function, \cbstart and $h$ is a nonnegative normalizing function defined on $E$. Then it is known that the conditional mean $m(x)=\mathbb{E}(Y | X=x)$ satisfies $m(x)=C'(\theta(x))$; equivalently, $\theta(x)=C'^{-1}\circ m(x)$. Arguing in the same way as in Section~\ref{sec:main:estimator} and viewing the quantity 
$\theta(x)$ and its gradient as the targets of a local linear estimation procedure, 
one may define a nearest-neighbor, penalized local linear 
estimator as
\begin{equation}
\label{eqn:penNN_GLM}
(\widehat{a}_n(x), \widehat{b}_n(x) ) = \argmax_{(a,b)\in \mathbb{R}\times \mathbb{R}^p} \{ \mathcal{L}_n(a,b) - \lambda \| b \| _1 \}
\end{equation}
with 
\begin{equation}
\label{eqn:penNN_GLMLn}
\mathcal{L}_n(a,b) = \sum_{i \in N_k(x)} Y_i (a + b^T (X_i-x)) - C( a + b^T (X_i-x) ).
\end{equation}
The penalized local logistic estimator spelled out in~\eqref{eqn:penNNlogit} is a special case of the more general construction~\eqref{eqn:penNN_GLM}, with $N=1$ and $C(t)=\log(1+e^t)$, as can be seen by comparing~\eqref{eqn:penNNlogit_GLMform} with~\eqref{eqn:penNN_GLMLn}. \cbend This log-likelihood is again a concave objective function, so under further regularity conditions on $C$ and after calculations analogous to those of 
Sections~\ref{sec:proofs:loglik} and~\ref{sec:proofs:theo}, 
our general theory in 
Theorem~\ref{theo:tightness} 
and 
\cbstart Proposition~\ref{prop:mean_var_score_nn} 
will apply and yield, after straightforward calculations, that  $(\sqrt{k}(\widehat{a}_n(x)  - \theta (x)), \tau_{n,k}(x) \sqrt{k} (\widehat{b}_n(x) - \nabla \theta (x)))$ has a nontrivial weak limit. 
This estimation technique then provides a way of recovering  the 
gradient of the parameter function $\theta$ in a multi-class setting under a GLM specification. \cbend

The argument of course extends in a straightforward fashion to the situation where $Y$ and the corresponding parameter mapping are multivariate: for example, a multinomial model with $K\geq 2$ classes has conditional probability mass function 
\[
(y_1,\ldots,y_{K-1}) \mapsto \pi_1(x)^{y_1} \cdots \pi_{K-1}(x)^{y_{K-1}} \left( 1-\sum_{j=1}^{K-1} \pi_j(x) \right)^{1-\sum_{j=1}^{K-1} y_j}
\]
where $y_1,\ldots,y_{K-1}\in \{0,1\}$ and $\sum_{j=1}^{K-1} y_j\in \{0,1\}$. This is readily put in exponential form, resulting in a probability mass function of the type $y\mapsto h(y) \exp(y^T \theta(x) - C(\theta(x)))$ with $y=(y_1,\ldots,y_{K-1})$ and $\theta(x)=(\pi_1(x), \ldots \pi_{K-1}(x))$ being $(K-1)-$dimensional. In such a situation one would use the estimator 
\[
(\widehat{a}_n(x), \widehat{b}_n(x) ) = \argmax_{(a,b)\in \mathbb{R}^{K-1}\times \mathbb{R}^{(K-1)\times p}} \{ \mathcal{L}_n(a,b) - \lambda \| b \| _1 \}
\]
with, for any $(a,b)\in \mathbb{R}^{K-1}\times \mathbb{R}^{(K-1)\times p}$, 
\[
\mathcal{L}_n(a,b) = \sum_{i \in N_k(x)} Y_i^T (a + b^T (X_i-x)) - C( a + b^T (X_i-x) ).
\]
\cbstart This makes it possible to construct nearest-neighbor 
LASSO methodologies for the gradients of (suitably transformed) success probabilities \cbend of multinomial responses in either the nominal or ordinal models; see Chapter~8 in~\cite{agr2013} for a concise introduction to such models in the classical GLM setting.

\section{Application to dimension reduction}\label{sec:numerical_exp}

After reviewing the outer product of gradient framework for dimension reduction, we introduce a new algorithm, based on our estimator of the gradient, to achieve dimension reduction in classification. 
We then describe the different competitors and evaluation metrics that we shall use. We next analyze synthetic data examples and finally consider real data examples in order to showcase the benefits of our methodology in classification tasks. The code that may be used to reproduce our experiments is publicly available on GitHub\footnote{\href{https://github.com/touqeerahmadunipd/LLO_regression}{https://github.com/touqeerahmadunipd/LLO\_regression}}. 

\subsection{Dimension reduction and outer-product of gradient}
\label{sec:aggregating}

\cbstart Suppose that
\[
\pi(x) = \mathbb{P}(Y = 1 \mid X = x) = g(\beta^{\top} x),
\]
where $\beta \in \mathbb{R}^{p \times d}$ with $d \leq p$ is an unknown parameter matrix and $g : \mathbb{R}^{d} \to [0,1]$ is an unknown measurable function. The objective is to estimate the \textit{central subspace}, defined as $\spann(\beta)$, thereby reducing the dimensionality of the problem, and then to estimate $\pi$. This two-step strategy alleviates the difficulty of directly estimating $\pi$ when the ambient dimension $p$ is large.

The gradient $\nabla \pi(x)$ of $\pi$ at $x$ satisfies 
$
\nabla \pi(x) = \beta \nabla g(\beta^T x).
$
As such, $\nabla \pi(x)\in \spann (\beta)$; likewise,
\[
\nabla \ell(x) = \nabla \logit \pi(x) = \frac{1}{\pi(x) (1-\pi(x))} \nabla \pi(x) \in \spann(\beta), \mbox{ where } \ell=\logit \pi.
\]
To recover $\spann(\beta)$, it then suffices to estimate enough gradients of the form $\nabla \ell(x_j)$. This motivates introducing the matrix 
\[
M = \int_{\mathbb R^p}  \nabla \ell(x)  \nabla \ell(x) ^T \mu (\diff x) = \mathbb{E}_{X^* \sim \mu} [ \nabla \ell(X^*) \nabla \ell(X^*)^T]
\]
where $\mu$ is a probability measure supported on $S_X$. This approach is often referred to as the \textit{(expected) outer product of gradients} method \citep{samarov1993,MR1891738,xia2002adaptive,yuaxukpohsu2025}. Our approach departs from the existing literature in two key respects: the use of nearest-neighbor selection to define local regions, and the introduction of a LASSO penalty to enforce sparsity in the estimated gradient.


 Recall that for any $x$, the solution $\widehat{b}_n (x)$ of the optimization procedure \eqref{eqn:penNNlogit} is an estimator of $\nabla\ell(x)$. 
To estimate the matrix $M$, generate $X_i^* \sim \mu$, $i=1,\ldots,m$ independently and compute \cbend
\begin{equation}
\label{eqn:M_est}
\widehat{M} = \frac{1}{m} \sum_{i=1}^m  \widehat{b}_n (X_i^*)  \widehat{b}_n (X_i^*)^T.
\end{equation}   
One can then define $(\widehat \beta_1,\ldots, \widehat \beta_p)$ as the set of orthogonal eigenvectors of $\widehat{M}$, ordered according to their eigenvalues (in decreasing order). Finally, given $d\in\{1,\ldots, p\}$ (which is chosen in practice by cross-validation, as we shall explain  in Section~\ref{algorith1}), the projection matrix 
\[
\widehat{P}_{\widehat{\beta}} = \widehat{P}_{\widehat{\beta},d} = \sum_{k=1}^d \widehat \beta_k \widehat \beta_k^T
\]
defines an estimator of the projection on the central subspace of interest. A corollary from our main results can now be stated on the estimation of $M$. 
\begin{corollary}\label{corollary_matrix}
Suppose that $\mu$ is a finitely supported measure on $S_X$ and that for each $x\in \operatorname{supp} (\mu)$, Assumptions \ref{cond:new1} and \ref{cond:new2} are satisfied. If $k:=k_n  \to \infty $ and $\lambda : = \lambda_n $ are such that $k^{1+p/2}/n\to\infty$, $k^{1+p/4}/n$ is bounded and $n \lambda^p / k^{1+p/2}$ converges to a finite constant, 
then
\[
\widehat{M} - M = O_{\mathbb{P}} \left( \frac{1}{\tau_{n,k}(x) \sqrt{k}} \right)  +  O_{\mathbb{P}} (  \tau_{n,k}(x) ) + O_{\mathbb{P}} \left( \frac{1}{\sqrt m } \right) .
\]
\end{corollary}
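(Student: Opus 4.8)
The plan is to split the error into a statistical part and a Monte Carlo part. Writing $\delta_n(x) := \widehat b_n(x) - \nabla\ell(x)$ for brevity, I would decompose
\begin{align*}
\widehat M - M &= \underbrace{\frac1m \sum_{i=1}^m \bigl( \widehat b_n(X_i^*)\,\widehat b_n(X_i^*)^T - \nabla\ell(X_i^*)\,\nabla\ell(X_i^*)^T \bigr)}_{=:\, A_m} \\
&\quad + \underbrace{\frac1m \sum_{i=1}^m \nabla\ell(X_i^*)\,\nabla\ell(X_i^*)^T - M}_{=:\, B_m},
\end{align*}
the first two error terms in the statement coming from $A_m$ and the last one from $B_m$.

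For $B_m$: since $\mu$ is finitely supported and, by \ref{cond:new2}, $\nabla\ell(x)$ is well defined and finite at every $x\in\operatorname{supp}(\mu)$, the map $x\mapsto\nabla\ell(x)\nabla\ell(x)^T$ is bounded on $\operatorname{supp}(\mu)$. Hence $B_m$ is the centred average of $m$ i.i.d.\ bounded random matrices with mean $M = \mathbb{E}_{X^*\sim\mu}[\nabla\ell(X^*)\nabla\ell(X^*)^T]$, and $B_m = O_{\mathbb{P}}(m^{-1/2})$ follows from the multivariate central limit theorem (or even just Chebyshev's inequality applied entrywise).

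For $A_m$: using the elementary identity $uu^T - vv^T = (u-v)u^T + v(u-v)^T$ with $u=\widehat b_n(x)$, $v=\nabla\ell(x)$, writing $\|\cdot\|$ for the operator norm (so that $\|wz^T\| = \|w\|_2\|z\|_2$), and applying the triangle inequality, one gets
\[
\|A_m\| \;\le\; \frac1m\sum_{i=1}^m \|\delta_n(X_i^*)\|_2\bigl(\|\widehat b_n(X_i^*)\|_2 + \|\nabla\ell(X_i^*)\|_2\bigr) \;\le\; \max_{x\in\operatorname{supp}(\mu)} \|\delta_n(x)\|_2\bigl(\|\widehat b_n(x)\|_2 + \|\nabla\ell(x)\|_2\bigr).
\]
Corollary~\ref{coro:main} applies at each of the finitely many points $x\in\operatorname{supp}(\mu)$ --- assumptions \ref{cond:new1}--\ref{cond:new2} hold there and the conditions imposed on $k$ and $\lambda$ are precisely those of Theorem~\ref{theo:main} --- so $\delta_n(x) = O_{\mathbb{P}}(1/(\tau_{n,k}(x)\sqrt k)) + O_{\mathbb{P}}(\tau_{n,k}(x))$ and, in particular, $\widehat b_n(x) = O_{\mathbb{P}}(1)$, while $\|\nabla\ell(x)\|_2$ is a finite deterministic constant. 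A maximum of finitely many $O_{\mathbb{P}}$ terms is again $O_{\mathbb{P}}$, and since $\tau_{n,k}(x) = (k/(n f_X(x) V_p))^{1/p}$ depends on $x$ only through $f_X(x)\in(0,\infty)$, the quantities $\tau_{n,k}(x)$, $x\in\operatorname{supp}(\mu)$, are all of the same order; hence $A_m = O_{\mathbb{P}}(1/(\tau_{n,k}(x)\sqrt k)) + O_{\mathbb{P}}(\tau_{n,k}(x))$. Adding the bounds for $A_m$ and $B_m$ gives the claimed rate.

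I do not anticipate a real obstacle: this is essentially a bookkeeping argument once Corollary~\ref{coro:main} is granted. The only point deserving care is that the summands of $A_m$ involve the estimators $\widehat b_n(x)$ at several points, which are mutually dependent because built from the same training sample; since we only need stochastic orders and not a joint limiting law, however, this dependence is immaterial and the crude maximum over the finite set $\operatorname{supp}(\mu)$ suffices. (One may also note that the Monte Carlo sample $(X_i^*)_{1\le i\le m}$ need not be assumed independent of the training data: the bound on $A_m$ is valid for every realisation of $(X_i^*)$ because it is phrased through a maximum over the fixed finite set $\operatorname{supp}(\mu)$, and the bound on $B_m$ only uses that the $X_i^*$ are i.i.d.\ from $\mu$.)
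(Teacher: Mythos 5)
Your proposal is correct and follows essentially the same route as the paper: the identical decomposition into the estimation error $A_m$ (bounded by a maximum over the finite support of $\mu$ and controlled via Corollary~\ref{coro:main}/Theorem~\ref{theo:main} at each point) and the Monte Carlo error $B_m$ (controlled via a second-moment/Chebyshev bound giving $O_{\mathbb{P}}(1/\sqrt m)$). Your added remarks --- the explicit rank-one identity for $uu^T - vv^T$ and the observation that the bandwidths $\tau_{n,k}(x)$ are of the same order across the finitely many points of $\operatorname{supp}(\mu)$ --- are sound clarifications of details the paper leaves implicit.
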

%
\cbstart
The eigenprojector $\widehat{P}_{\widehat{\beta}}$ has the same rate of convergence, 
see Lemma 4.1 in \cite{tyler_81}. This allows consistent estimation of $\spann(\beta)$. 
\cbend

\cbstart
The rate of convergence in Corollary~\ref{corollary_matrix} is nonparametric. This might be improved by choosing the measure $\mu$ differently: observe that a natural choice for $\mu$, which is also the one made in our numerical experiments, is the empirical measure of $X_1,\ldots, X_n$. This is, of course, a random measure whose number of atoms is not bounded with respect to $n$, meaning that Corollary~\ref{corollary_matrix} does not apply to this choice of $\mu$.  From a theoretical perspective, we conjecture, following results given in \cite{MR1865333,dalalyan:2008,yuaxukpohsu2025}, that such a choice would be valid and lead to a different rate of convergence compared to the one given in the above corollary, although under substantially stronger assumptions than ours, such as extra conditions on the regularity of the design, and possibly moment conditions on $X$. This question is left for further research. \cbend 


\subsection{The algorithm}\label{algorith1}

The proposed dimension reduction algorithm is now described. Two versions will be considered when solving \eqref{eqn:penNNlogit}: $\lambda = 0$ and $\lambda >0$. We denote them by LLO($\lambda =0$) and LLO($\lambda > 0$), respectively, where LLO is shorthand for local logistic.

\subsubsection{Estimation of the dimension reduction matrix}\label{DRM}

\cbstart {\textbf{Description of the algorithm:}} For a given choice of $m$  (the number of vectors $b$ used to estimate $M$ {following \eqref{eqn:M_est}}), a given value of $\lambda$ (the penalization parameter in the optimization problem) \cbend and $k$ the number of neighbors, the computation of the dimension reduction matrix is straightforward; see Algorithm \ref{alg:Mestimate} below (the optimization 
uses the \texttt{R} function \texttt{glmnet} from the package of the same name).

\begin{algorithm}
\begin{algorithmic}[1]
\State {\textbf{Input:} $(X_1,Y_1),\ldots,(X_n,Y_n)\in \mathbb{R}^{p}\times \{ 0,1 \}$}, $\lambda>0$, $k\in \{1,\ldots, n\}$ and $m\in \{1,\ldots, n\}$
\State {\textbf{Output:} Dimension reduction matrix $\widehat{M}$}
\State {Draw uniformly a list $\mathcal X_m$ of $m$ 
different observations among $\mathcal X = (X_1,\ldots,X_n)$}
\For{each $x$ $\in \mathcal X_m$}
      \State{Compute $N_k(x)$, the index of the $k$ nearest neighbors to $x$ among $\mathcal X$}
      \State{Compute $\widehat{a}_n(x)$ and $\widehat{b}_n(x)$ according to~\eqref{eqn:penNNlogit}}  
\EndFor
\State{Return $\widehat{M} = \frac{1}{m} \sum_{x\in \mathcal X_m} \widehat{b}_n(x)\widehat{b}_n(x)^T$}
\end{algorithmic}
\caption{Estimation of $M$}
\label{alg:Mestimate}
\end{algorithm}

\cbstart To mitigate biases that may arise from imbalanced class distributions, we exclude samples where, after finding the closest neighbors, either class 0 or class 1 is rare, defined here as having fewer than $5$ points within one of the two classes. Ideally, one should set $m=n$ so that the gradient is estimated at each data point; here we set $m=n/4$ to save computation time, as, in our experience, this choice does not substantially adversely affect finite-sample results. \cbend
\vskip1ex
\noindent
\cbstart {\textbf{Hyperparameter selection:}} We now discuss the choice of the hyperparameters $\lambda $ and then $k$. To estimate the directions featured in \eqref{eqn:M_est}, we use either the pure nearest-neighbor logistic regression without penalization ($\lambda = 0) $ or its penalized version ($\lambda >0$). For the latter, we employ the following cross-validation (CV) to set $\lambda$. Our approach consists in selecting the same parameter $\lambda$ for all $x\in \mathcal X_m$ in order to decrease runtime. We first localize the data around the central point  $x=\overline{x}=\frac{1}{n}\sum_{i=1}^n X_i$ with the default choice of $\lfloor \sqrt{n} \rfloor$-nearest neighbors to $x$. Then we select $\lambda$ by regular $10$-fold cross-validation applied to the localized data. In other words, we divide the (localized) data into $10$ randomly selected subsets of equal size, each subset serving as a validation set while the remaining subsets are used for model fitting. The fitted model is assessed on the validation set using the misclassification error, that is, the proportion of observations whose label is not correctly predicted, which is 
the empirical counterpart of the misclassification risk $\mathcal{R}(g)= \mathbb{P}(g(X)\neq Y)$ for a given classifier $g$; in this cross-validation procedure, an observation is labeled as $1$ if and only if its predicted probability of success by the nearest-neighbor logistic estimator $\widehat{\pi}_n(\overline{x})=\expit(\widehat{a}_n(\overline{x}))$ exceeds $1/2$. This evaluation of the quality of the fitted model is then done across a sequence of $\lambda$ values: more precisely, we use the \texttt{R} function \texttt{cv.glmnet} from the \texttt{glmnet} package with \texttt{type.measure=class}, which 
computes the average misclassification error across all validation sets, and the regularization parameter $\lambda$ selected is the one minimizing this error. 

A possible way to choose $k$ is also through a CV procedure. Specifically, we consider $k\in\{1,5,…,500\}$ 
and then use Algorithm \ref{alg:Mestimate} to obtain a reduced representation of the data that is ultimately used to fit a classifier (here a $10$-NN classifier was employed); again, the quality of each classifier is evaluated usign the misclassification error. For the penalized method LLO$(\lambda > 0$), where $\lambda$ is set with the above CV procedure, 
the selected values of $k$ are 
closer to the default $(k = \lfloor \sqrt{n} \rfloor)$ (see Figure~\ref{fig_lam}), and the misclassification error remains stable over a range of $k$, indicating reasonably low sensitivity to this parameter.
Given this empirical stability and to reduce computational burden, we retain the default choice $k = \lfloor \sqrt{n} \rfloor$ for LLO$(\lambda > 0)$ 
throughout the numerical experiments below. A slight improvement may be expected by fine-tuning. Concerning the case $\lambda = 0$, the sensitivity to $k$ is slightly more pronounced compared to $\lambda >0$, so we prefer to fix $k$ using the above described CV scheme in this case. It is worth noting that we later use the local-likelihood-based method~\citep{lambert2006local} and Principal Weighted Support Vector Machines (WSVM)~\citep{shin2017principal} as competitors; the bandwidth parameter $h$ for the former will also be chosen via CV.
\cbend 

\subsubsection{Selecting the dimension of the reduction subspace}


\cbstart
Following the seminal paper by \cite{li91}, the leading approach to dimension estimation is based on a sequential rank testing procedure. Each test considers the following null hypothesis $H_0:$ ``the rank of the matrix is $r$'', and is implemented sequentially with increasing $r$; the first value for which $H_0$ is not rejected gives the rank estimate. This estimate also determines the number $d$ of components retained for dimension reduction. We refer to \cite{bura2011dimension} and \cite{portier2014bootstrap} for details on the rank testing procedures used to estimate the dimension of the reduction subspace.

The approach taken in our work is not rank-based, but is instead driven by cross-validation on the underlying classification task. Our rationale is that determining the dimension solely from the rank of the dimension reduction matrix can be overly restrictive, as additional information from the prediction task may also be useful. We adopt this broader perspective: task-specific information is exploited alongside rank information to estimate the dimension more effectively. Concretely, rank information enters our procedure through the construction of candidate subspaces, which are sums of eigenspaces associated with decreasing eigenvalues. The candidate subspaces are then compared using cross-validation.

\cbend

Our procedure first divides the data into a training set $(X_{\mathrm{train}},Y_{\mathrm{train}})$ and a testing set $(X_{\mathrm{test}},Y_{\mathrm{test}})$. The matrix $\widehat{M}$ and its $p$ orthogonal eigenvectors $\widehat \beta_1,\ldots, \widehat \beta_p$, in decreasing order according to their eigenvalues, are estimated from the training set following the procedure described in Algorithm \ref{alg:Mestimate}. For every $d\in \{1,\ldots,p\}$, the first $d$ eigenvectors are gathered in a matrix $\widehat \beta _{(1:d)}\in \mathbb R^{p\times d}$, the sets of covariates $X_{\mathrm{train}}$ and $X_{\mathrm{test}}$ are projected onto the sets of lower-dimensional covariates $X_{\mathrm{train}} \widehat{\beta}_{(1:d)}$ and $X_{\mathrm{test}} \widehat{\beta}_{(1:d)}$, and a classifier is learned based on the training set $(X_{\mathrm{train}} \widehat{\beta}_{(1:d)}, Y_{\mathrm{train}})$. We use here  
the \texttt{knn} nearest-neighbor classifier, that is, at a given point $x$, the result of the majority vote among the nearest neighbors of $x$ within the space of projections of the covariates in the training set (with ties broken at random). For this classifier, the training step merely consists of storing the covariates in the training set along with their labels, which will form the basis for the vote at each point. In the real data analysis, we shall also compare our results with the Random Forest classifier, whose training step is nontrivial. The performance of the chosen classifier is then evaluated on the test set $(X_{\mathrm{test}} \widehat{\beta}_{(1:d)}, Y_{\mathrm{test}})$, and the dimension retained is the one for which the classifier has the lowest misclassification risk.

This forward iterative procedure, summarized in Algorithm~\ref{alg:feature}, 
\cbstart strikes \cbend a balance between dimension reduction and the preservation of relevant information for classification. 
We shall compare the results obtained with the situation where the correct dimension of the reduction subspace is known in order to assess the influence of the dimension selection step.

\begin{algorithm}[t]
\begin{algorithmic}[2]
\State{\textbf{Input:} Dataset $(X,Y)$ with $X\in \mathbb{R}^{n\times p}$ and $Y\in  \{ 0,1 \}^n $, classification algorithm $g$ (kNN, random forest...), and parameters $\lambda>0$, $k\in \{1,\ldots, n\}$, $m\in \{1,\ldots, n\}$ and {$K\geq 2$}}
\State{\textbf{Output:} {Dimension of reduction subspace}}
\State{Estimate $\widehat{M}$ using Algorithm \ref{alg:Mestimate} and compute the eigenvectors $\widehat \beta_1,\ldots, \widehat \beta_p$ of $\widehat{M}$}
\State{Split $(X,Y)$ into $K$ folds $(X_{(j)},Y_{(j)})_{j=1,\ldots,K}$}
\For{each $d \in \{ 1,\ldots, p\} $}
    \State{{Define $\widehat{\beta}_{(1:d)}=[\widehat \beta_1 \cdots \widehat \beta_d]$}}
\For{each $j\in \{ 1,\ldots, K\} $}
    \State{Define $(X_{\mathrm{train}},Y_{\mathrm{train}}) =  (X,Y)\backslash (X_{(j)},Y_{(j)})$}
    \State{Train the classification rule $g$ on data $( X_{\mathrm{train}} \widehat{\beta}_{(1:d)} , Y_{\mathrm{train}})$}
    \State{{Evaluate its misclassification risk $R_{j,d}$ using $(X_{(j)} \widehat \beta _{(1:d)}, Y_{(j)})$}}
\EndFor

\State{{Compute $R_{d} = \frac{1}{K} \sum_{j=1} ^K R_{j,d}$}}
\EndFor
\State{{Return: $d$ minimizing $R_d$}}
\end{algorithmic}
\caption{Estimation of the dimension $d$}
\label{alg:feature}
\end{algorithm}

\subsection{Simulation study}\label{sec:numerical_exp:sim}

In 
the examples presented below, $(X_i, Y_i)_{i=1,\ldots, n}$ is a collection of independent and identically distributed random copies of the pair $(X, Y)$, and $X$ is a vector of independent centered and unit Gaussian covariates with dimension $p\geq 8$. We consider the following four examples:
\\
\noindent{\textbf{Example 1.} The 
response $Y$ follows a Bernoulli distribution with parameter \cbstart $\expit(X_1)$. \cbend
\\
\noindent\textbf{Example 2.} The 
response is $Y= \sign\{ \sin(X_1)+ X_2^2 +0.2\epsilon \}$, where $\epsilon \sim \mathcal N (0,1)$. \\
\cbstart \noindent\textbf{Example 3.} The 
response is $Y= \sign\{(X_1+0.5)(X_2-0.5) +0.2\epsilon \}$, where $\epsilon \sim \mathcal N (0, 1)$. \cbend\\
\noindent\textbf{Example 4.} The 
response is $Y= \sign\{\log(X_1^2)(X_2^2 + X_3) +0.2\epsilon \}$, where $\epsilon \sim \mathcal N (0,1)$.

Example 1 is the simplest instance of a logistic regression model with a single relevant feature, while Examples 2, 3, and 4 are closely related to examples considered by \cite{meng2020sufficient}. We compare the proposed approach, with penalization LLO$(\lambda>0)$ and without penalization LLO$(\lambda=0)$, with the following existing competitors: \cbstart SAVE,
which is an inverse regression technique~\citep{cook1991discussion,li1992principal} (using the \texttt{dr} function from the \texttt{R} package of the same name), POTD, relying on optimal transport~\citep{meng2020sufficient} (implemented using code provided on Github\footnote{\href{https://github.com/ChengzijunAixiaoli/POTD}{https://github.com/ChengzijunAixiaoli/POTD})}), 
WSVM~\citep{shin2017principal} (using the \texttt{psvmSDR} package in \texttt{R}) and 
LGSIM~\citep{lambert2006local}. As we mentioned in Section~\ref{algorith1}, we select {the tuning parameter $\lambda$ in LLO$(\lambda>0)$ by CV, and the tuning parameter $k$ in LLO$(\lambda=0)$ by CV as well.} For LGSIM, we select the bandwidth parameter $h = c \cdot h_{Scott}$, where $h_{Scott}$ denotes Scott's rule, {by determining the scaling constant $c\in \{0.5,1,1.5,2,2.5,3,3.5,4,5,10\}$ using 5-fold cross-validation. Model performance is assessed using the misclassification error of a $10$-NN classifier with projected covariates as for LLO$(\lambda>0)$ in Section \ref{algorith1}. For WSVM, the bandwidth parameter $h \in \{2,2.5,3,3.5,4,5,6,7,8,9,10,12\}$} is tuned similarly via 5-fold cross-validation. \cbend

The methods are compared using, on the one hand, the distance between the estimated central subspace and the true central subspace using the Frobenius distance between the projection matrices on these subspaces, that is, $d(\mathcal{S}(\widehat{\beta}), \mathcal{S}({\beta}))= \| P_{\widehat{\beta}}  - P_{\beta} \|_F$ with $P_{\beta}  = \beta \beta^T$ for any orthogonal matrix $\beta$ of $p-$dimensional vectors. In each example, the central subspace is explicit: \cbstart
\begin{itemize}
\item In Example 1, the central subspace is spanned by the first vector of the canonical basis in $\mathbb{R}^p$, 
\item In Examples 2 and 3, it is spanned by the first two vectors of this basis, 
\item In Example 4, it is spanned by the first three vectors of this basis.
\end{itemize}
\cbend As such, Examples 1 to 4 are ranked in order of complexity of the dimension reduction problem.

On the other hand, these dimension reduction methods naturally give rise to classification procedures, in the following way: if $P$ denotes the projection matrix on an estimated central subspace of dimension $d$, then a nearest-neighbor classifier at $X=x$ is defined by the result of the majority vote among the nearest neighbors of $Px$. We therefore compare the nearest-neighbor classifiers obtained in this way using each of the dimension reduction procedures we consider; for this, and in each example, we generate independently a test data set on which all the nearest-neighbor classifiers are run (for this classification task we take $k=10$, as in Section~5 of~\cite{meng2020sufficient}), and their misclassification rates on this test data set are stored. It is worth noting that an easy dimension reduction problem may not automatically translate into an easy classification problem: in Example 1, for instance, the conditional probability that $Y=1$ is often close to $1/2$, meaning that the misclassification risk is bound to be high even if the central subspace is correctly identified. The distance to the central subspace and the misclassification risk thus give two different pieces of information about the accuracy of each method and the difficulty of each problem. We consider sample sizes $n=500, 1000, 2000, 3000, 4000$, and in each case the misclassification risk and the distance to the central subspace are averaged over $N=1000$ independent replications.

Results are represented in Figures~\ref{fig:distance+mcrisk12} and~\ref{fig:distance+mcrisk34}, first in the case when the dimension $d$ of the reduction subspace is assumed to be known and correctly specified in each example, and the dimension of the full covariate space is $p=8$. We observe that the penalized nearest-neighbor local logistic method LLO($\lambda>0$) performs best in terms of estimation of the central subspace, especially for low sample sizes \cbstart(except for Example 2 where non-penalized and penalized versions give similar results). The non-penalized version LLO$(\lambda=0)$ is substantially less accurate in Examples 1, 3, and 4\cbend, which highlights the importance of penalization. The nearest-neighbor majority vote classifier fed with the estimated central subspace obtained using the LLO($\lambda>0$) gradient estimates is also best among the tested methods as far as misclassification risk is concerned and performs almost as well as if the correct central subspace was used. \cbstart For larger sample sizes, LLO($\lambda>0$), POTD, and LGSIM appear to be the strongest overall competitors, while WSVM appears to be a poor performer in all examples for both small and large sample sizes. \cbend 

In Figure~\ref{fig:distance_Ex4}, we moreover examine how the methods perform when the correct dimension reduction subspace and sample size are fixed but the dimensions of the ambient space and estimated central subspace vary, \textit{i.e.} we consider Example~4 for $n=1000$ with $X$ a vector of standard Gaussian random variables having dimension $p\in \{8,16,32,64\}$, and various dimensions $1\leq d\leq 6$ for the estimated central subspace. The LLO($\lambda>0$) method again performs best overall, and markedly improves over competitors when $p\leq 16$. It is interesting to note that the best results for LLO($\lambda>0$) are found when the dimension $d$ is correctly specified if $p\leq 16$; for higher dimensions this is not necessarily the case, although a dimension close to the dimension of the correct central subspace will tend to yield better results. The results related to Examples 1 to 3 are similar and are deferred to 
Figures~\ref{fig:distances_pvarying}--\ref{fig:mcrisks} 
for the sake of brevity.

\begin{figure}[t!]
	\begin{subfigure}{.45\textwidth}
		\centering
		\includegraphics[width=1\linewidth]{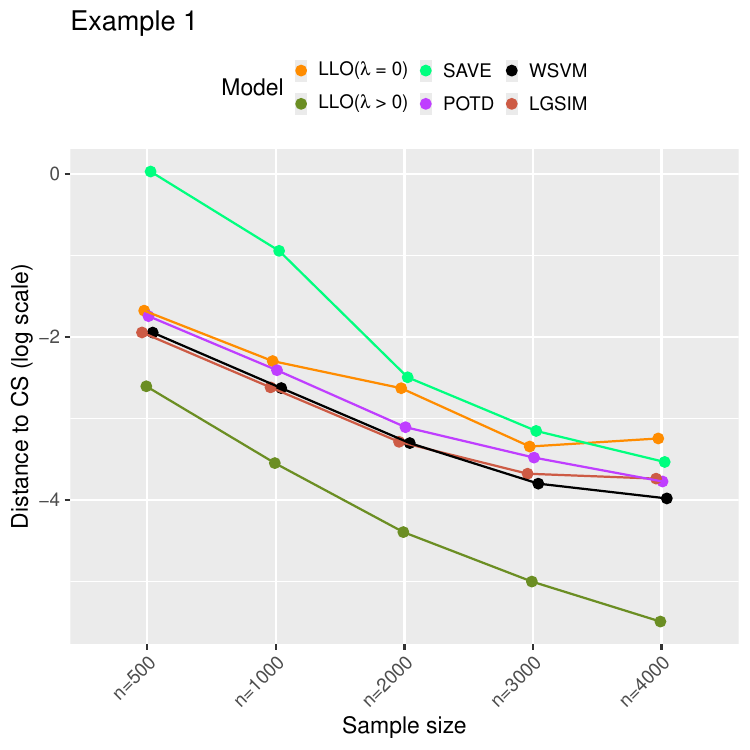}  
		\caption{{Distance to central subspace}}
	\end{subfigure}
    \hfill
	\begin{subfigure}{.45\textwidth}
		\centering
		\includegraphics[width=1\linewidth]{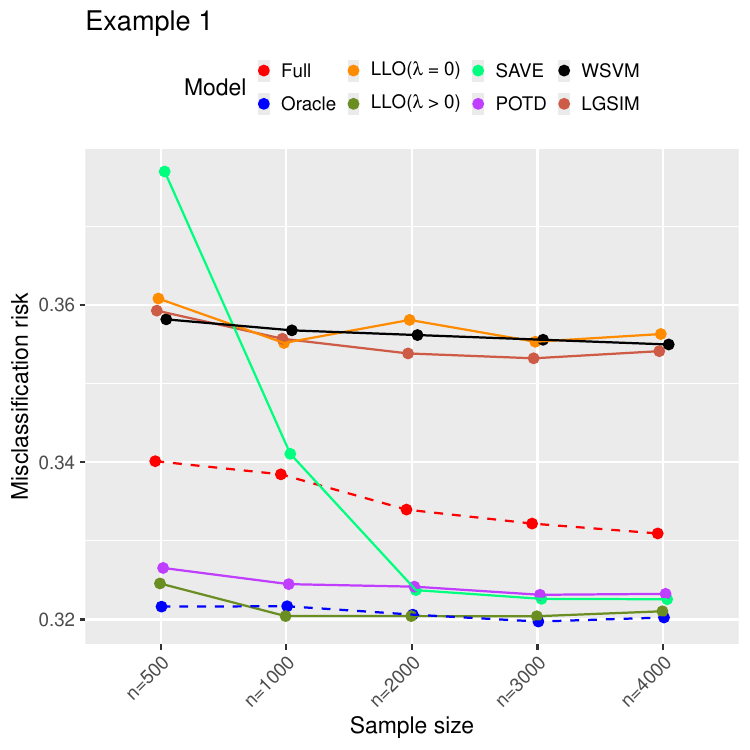}  
		\caption{Misclassification risk}
	\end{subfigure}
	\newline
	\begin{subfigure}{.45\textwidth}
		\centering
		\includegraphics[width=1\linewidth]{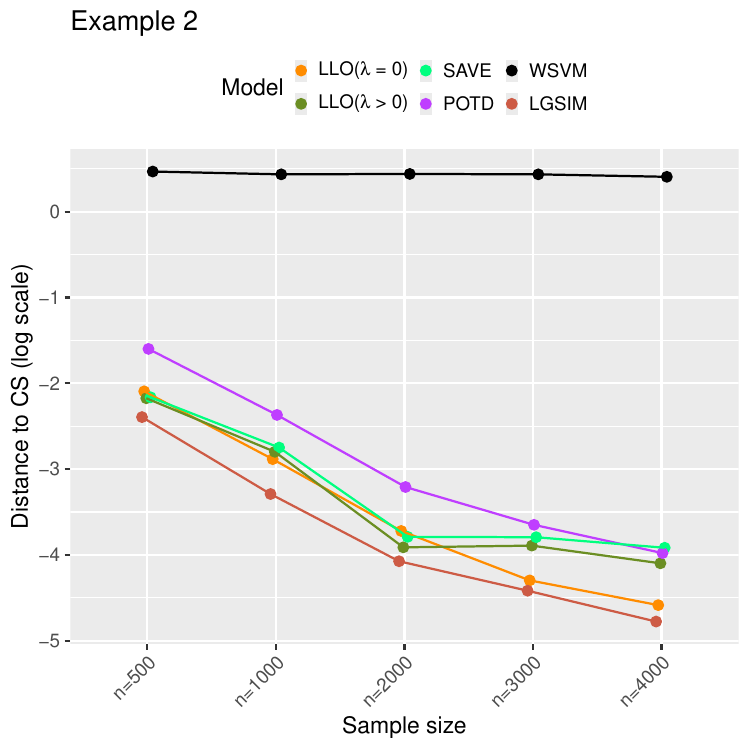}  
		\caption{{Distance to central subspace}}
	\end{subfigure}
    \hfill
    \begin{subfigure}{.45\textwidth}
		\centering
		\includegraphics[width=1\linewidth]{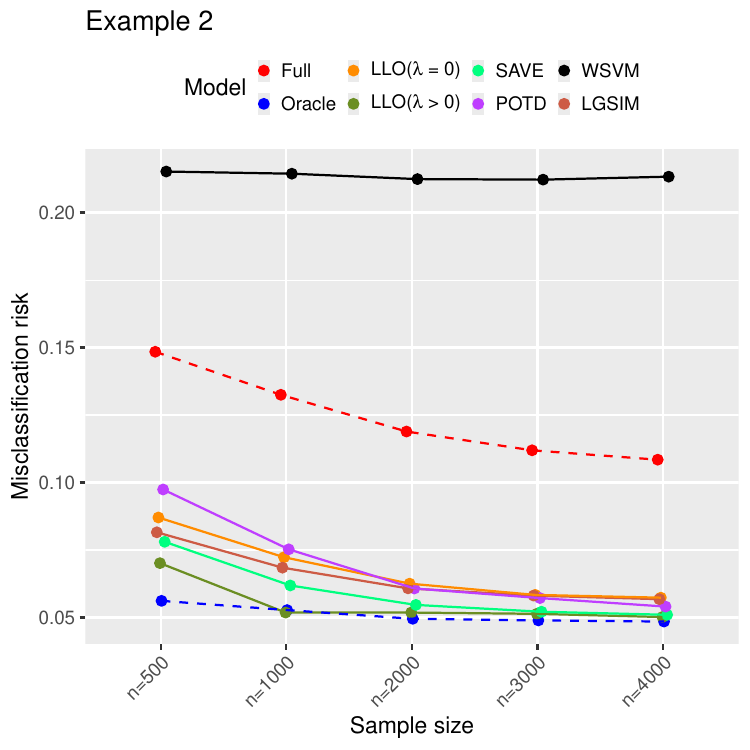}  
		\caption{Misclassification risk}
	\end{subfigure}
	\caption{Simulation study -- Distance to the central subspace (left, on the log scale) and misclassification risk (right) in Example 1 (top) and Example 2 (bottom), averaged over $N=1000$ replications in each situation. The covariate has dimension $p=8$ and the dimension $d$ is chosen as the dimension of the correct population central subspace (\textit{i.e.}~$d=1$ in Example 1 and $d=2$ in Example 2). In the right-hand panels, the misclassification risk represented relates to the $10$-NN classifier using the set of projected covariates on the estimated subspace produced by each method; in addition, ``Full'' denotes this classifier on the full, non-projected set of covariates, and ``Oracle'' denotes this classifier using the covariates projected on the correct population central subspace.}
	\label{fig:distance+mcrisk12}
\end{figure}

\begin{figure}[t!]
	\begin{subfigure}{.45\textwidth}
		\centering
		\includegraphics[width=1\linewidth]{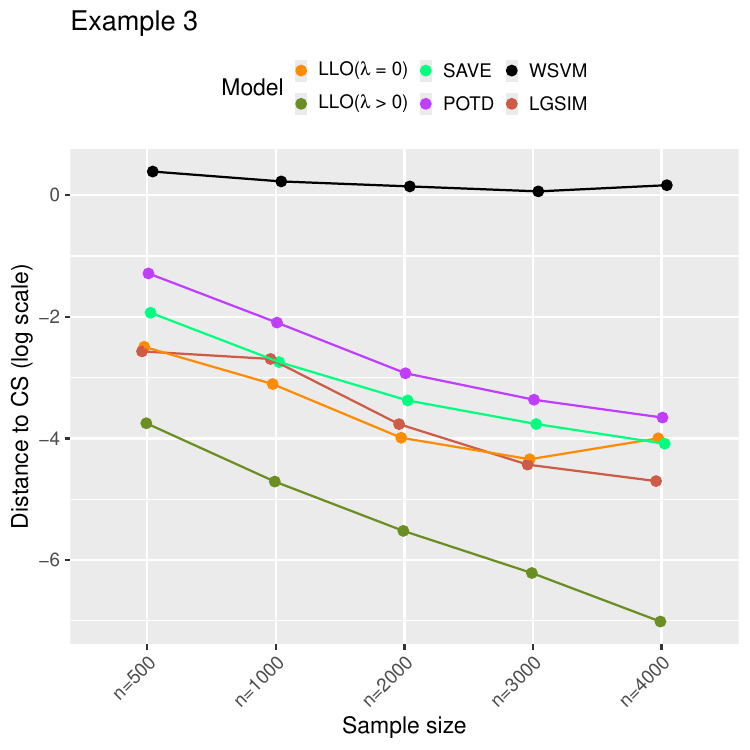}  
		\caption{{Distance to central subspace}}
	\end{subfigure}
    \hfill
	\begin{subfigure}{.45\textwidth}
		\centering
		\includegraphics[width=1\linewidth]{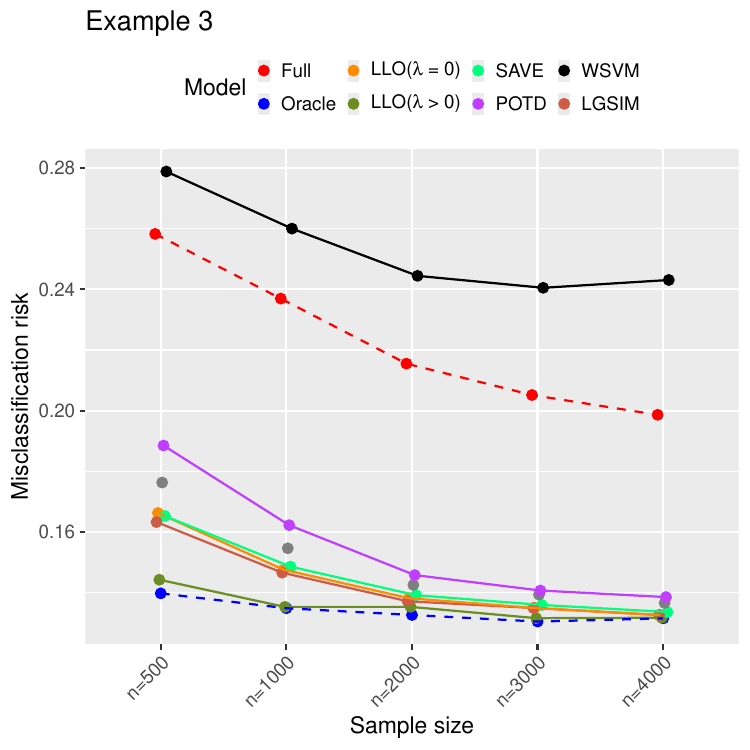}  
		\caption{Misclassification risk}
	\end{subfigure}
	\newline
	\begin{subfigure}{.45\textwidth}
		\centering
		\includegraphics[width=1\linewidth]{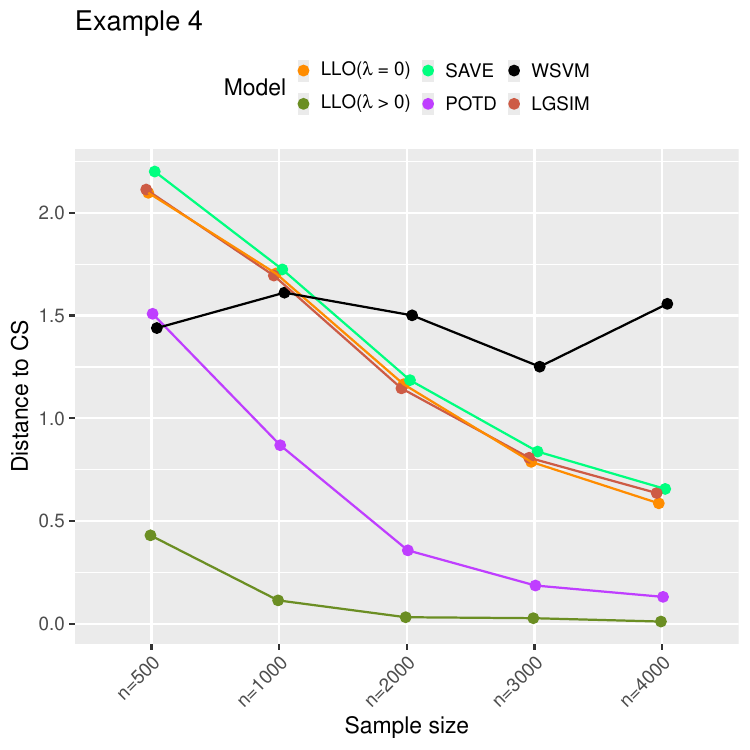}  
		\caption{{Distance to central subspace}}
	\end{subfigure}
    \hfill
    \begin{subfigure}{.45\textwidth}
		\centering
		\includegraphics[width=1\linewidth]{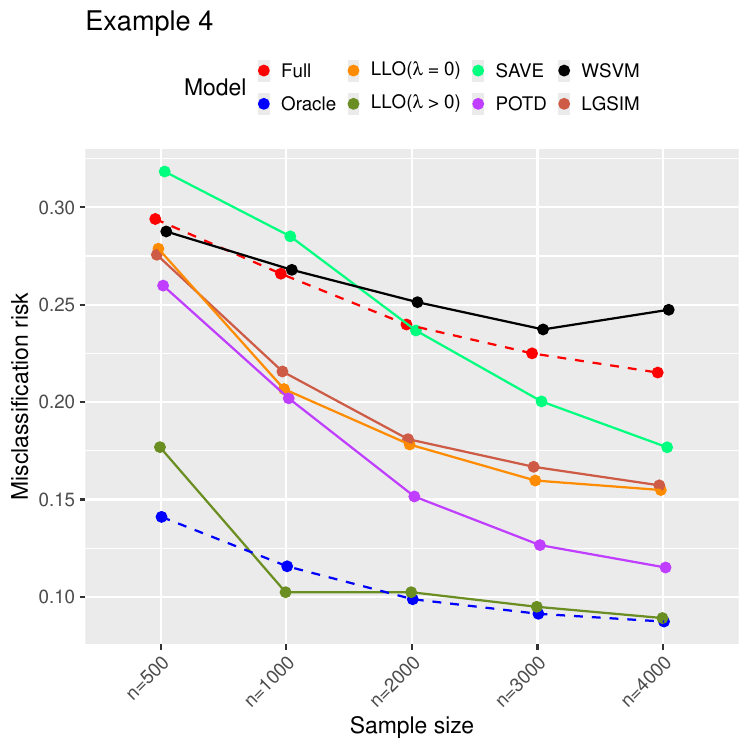}  
		\caption{Misclassification risk}
	\end{subfigure}
	\caption{Simulation study -- Distance to the central subspace (left) and misclassification risk (right) in Example 3 (top) and Example 4 (bottom), averaged over $N=1000$ replications in each situation. The covariate has dimension $p=8$ and the dimension $d$ is chosen as the dimension of the correct population central subspace (\textit{i.e.}~$d=2$ in Example 3 and $d=3$ in Example 4). In the right-hand panels, the misclassification risk represented relates tto the $10$-NN classifier using  the set of projected covariates on the estimated subspace produced by each method; in addition, ``Full'' denotes this classifier on the full, non-projected set of covariates, and ``Oracle'' denotes this classifier using the covariates projected on the correct population central subspace.}
	\label{fig:distance+mcrisk34}
\end{figure}

\begin{figure}[t!]
 \begin{subfigure}{.45\textwidth}
		\centering
		\includegraphics[width=1\linewidth]{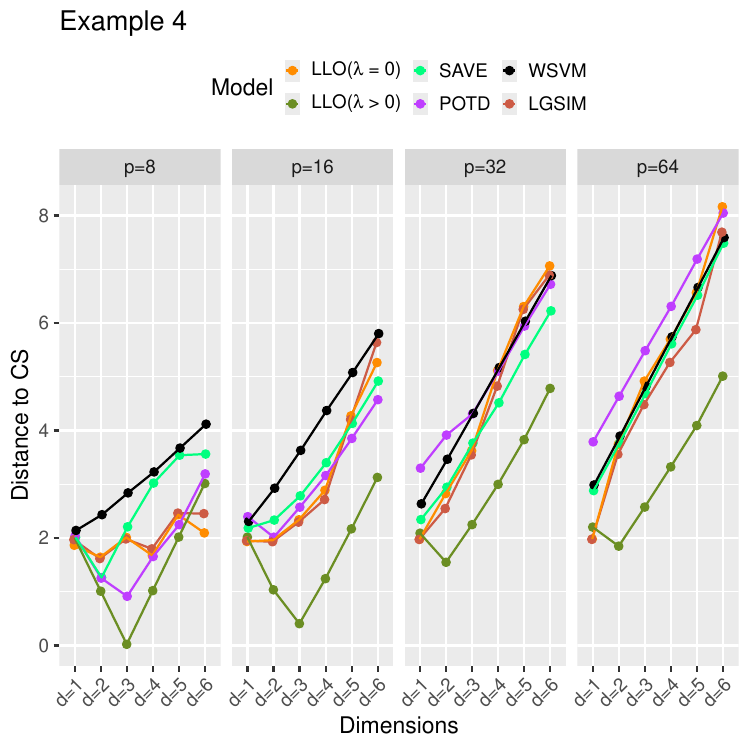}  
		\caption{{Distance to central subspace}}
	\end{subfigure}
 \begin{subfigure}{.45\textwidth}
		\centering
		\includegraphics[width=1\linewidth]{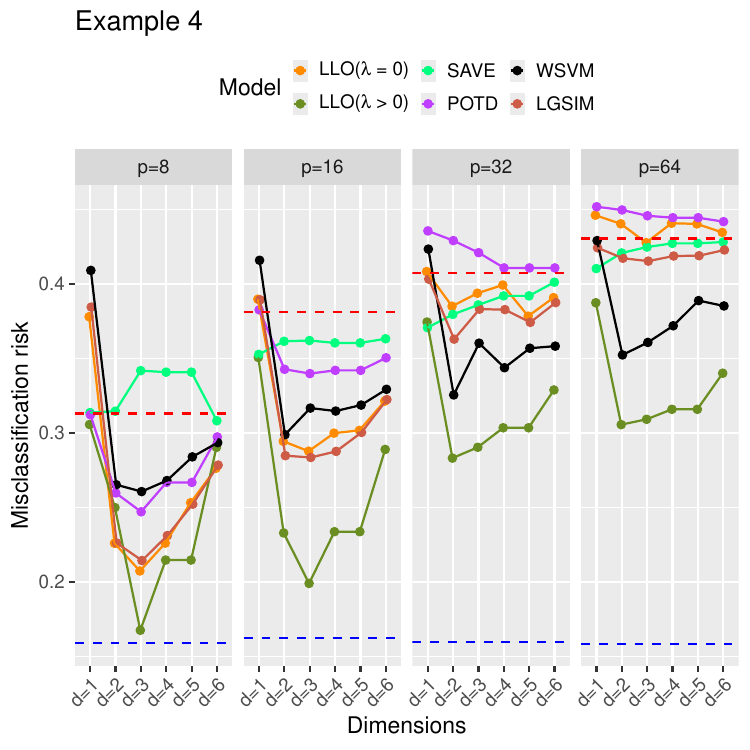}  
		\caption{Misclassification risk}
	\end{subfigure}
	\caption{Simulation study -- Distance to the central subspace (left) and misclassification risk (right) in Example 4, averaged over $N=1000$ replications of a sample of size $n=1000$, as a function of the dimension $d\in \{1,\ldots,6\}$ of the estimated central subspace and $p\in \{8,16,32,64\}$ of the full covariate space. In the right-hand panels, the red dashed line corresponds to the nearest-neighbor classifier with $d=p$, and the blue dashed line corresponds to this classifier using the covariates projected on the correct population central subspace.}
	\label{fig:distance_Ex4}
\end{figure}

We finally assess the performance of our complete workflow, namely, dimension reduction and selection through Algorithms~\ref{alg:Mestimate} and~\ref{alg:feature} and then classification using the nearest-neighbor classifier. Again, we focus on Example~4 for the sake of brevity, in the case $n=1000$ with $X$ a vector of independent centered and unit Gaussian random \cbstart covariates \cbend having dimension $p\in \{8,16,32,64\}$. We first ran a series of experiments to check the accuracy of our cross-validation dimension selection procedure outlined in Algorithm~\ref{alg:feature} under the same experimental design. It is seen in the left panel of Figure~\ref{fig:analysis_Ex4} that in the majority of cases, the dimension selected is within one unit of the correct dimension, and that dimension selection is typically quite accurate for $p\leq 16$. In the right panel of Figure~\ref{fig:analysis_Ex4}, we represent the misclassification risks of the LLO$(\lambda=0)$ and LLO$(\lambda>0)$ methods following the full workflow, and we compare their misclassification rates with the misclassification probability of their versions when $d$ is chosen as the dimension of the correct central subspace. It is striking that the rate of misclassification tends to be lower when following the full workflow (and therefore when estimating the dimension by cross-validation on the misclassification probability) than when using the correct dimension. \cbstart The interpretation is that 
when the dimension is large, estimating each vector $\widehat{b}_n(x)$ used to construct the matrix $\widehat{M}$ becomes more difficult, which can lead to substantial estimation errors. As expected, this estimation error in $\widehat{M}$ propagates to the estimated eigenvectors, so that directions that are truly relevant for predicting $Y$ may only appear among a small subset of eigenvectors. This 
can result in relevant coordinates being identified at a smaller eigenvalue order than in the population case (as in Figure \ref{fig:analysis_Ex4}). As a consequence, the dimension that should be selected based on the estimated matrix $\widehat{M}$ may be smaller (or larger) than the true underlying dimension. \cbend
It is also apparent that the performance of the full workflow tends to be slightly more robust to an increase in the dimension of the ambient covariate space. The results in Examples~1 to~3 are broadly similar and can be found in 
Figures~\ref{fig:dimension_select}--\ref{fig:dim_comparison}.

\begin{figure}[t!]
\begin{subfigure}{.45\textwidth}
		\centering
		\includegraphics[width=1\linewidth]{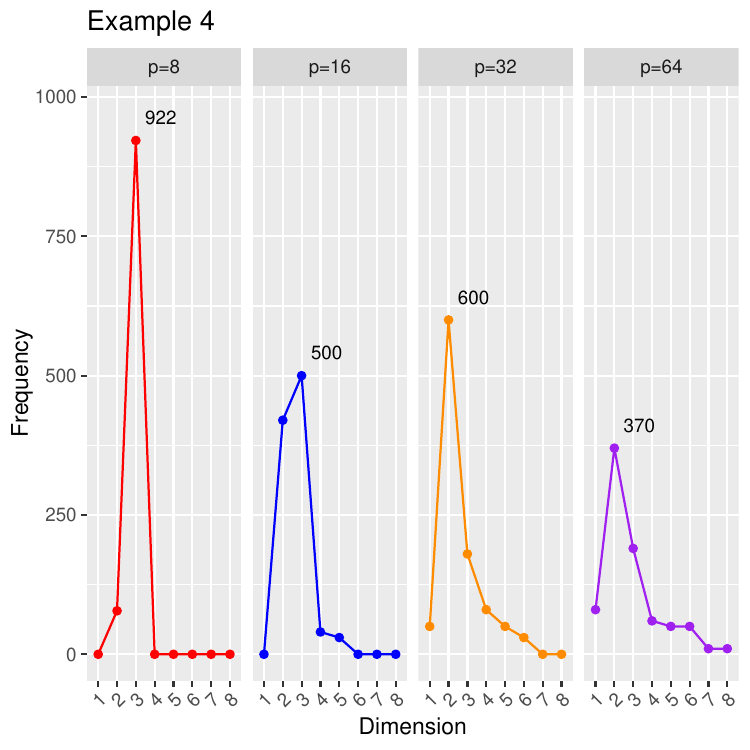}  
\end{subfigure}
\begin{subfigure}{.45\textwidth}
		\centering
		\includegraphics[width=1\linewidth]{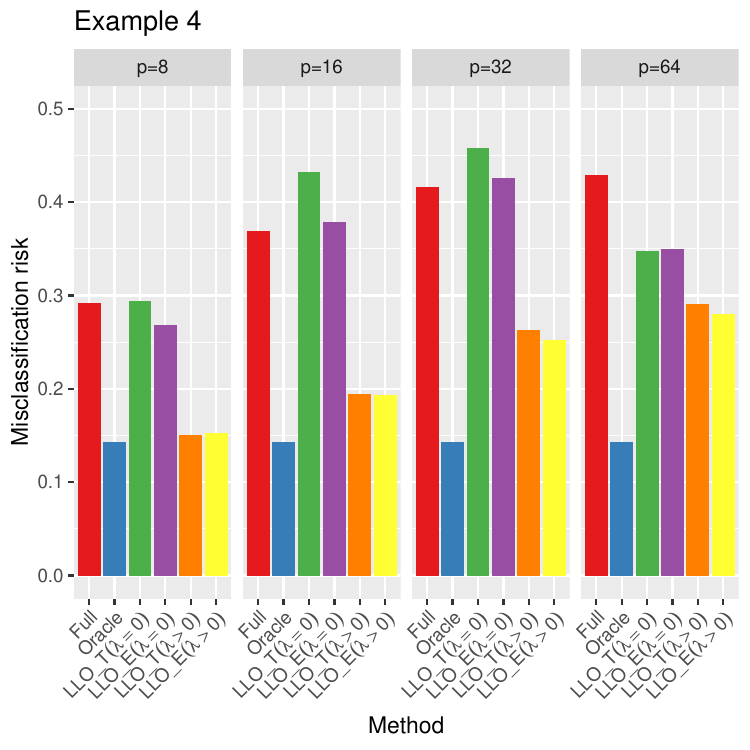}  
\end{subfigure}
	\caption{Simulation study -- Left panel: Dimension selection through Algorithm~\ref{alg:feature}, where the number indicated above the curve gives the number of times the dimension selected in the (absolute or relative) majority of cases was chosen. Right panel: Misclassification risk of the nearest-neighbor classifier with, from left to right, $d=p$ (red bar), the covariates projected on the correct population central subspace (blue bar), the central subspace estimated using the non-penalized LLO($\lambda=0$) method under correct specification of the dimension (green bar) and with the dimension estimated by cross-validation (purple bar), and the central subspace estimated using the penalized LLO($\lambda>0$) method under correct specification of the dimension (orange bar) and with the dimension estimated by cross-validation (yellow bar). Both panels are produced using $N=1000$ independent replications of a sample of size $n=1000$ and dimensions $p\in \{8,16,32,64\}$ of the full covariate space are considered.}
	\label{fig:analysis_Ex4}
\end{figure}

\subsection{Real data analyses}\label{sec:numerical_exp:data}

\cbstart We apply the proposed methodology to three real datasets, two obtained from the UCI repository and one from the NASA POWER data archive: \cbend
\begin{itemize}[wide,labelindent=0pt]
\item  The Hill-Valley (HV) dataset\footnote{\href{https://archive.ics.uci.edu/dataset/166/hill+valley}{https://archive.ics.uci.edu/dataset/166/hill+valley}}. Each data point is made of 100 real numbers $\bm{x}_i=(x_{i,j})_{1\leq j\leq 100}$ 
which create a curve in the two-dimensional plane that features a hill (a ``bump'' in the curve) or a valley (a ``dip'' in the curve). The data consists of the $n=1212$ pairs $(Y_i,\bm{x}_i)\in \{ 0,1 \}\times \mathbb{R}^{100}$, where 
$Y_i=1$ if and only if the curve features a hill.

\item \cbstart The Rennes city precipitation (RP) dataset\footnote{\href{https://power.larc.nasa.gov/data-access-viewer/}{https://power.larc.nasa.gov/data-access-viewer/}}. 
After data cleaning, the dataset contains $n=1826$ observations with $p=28$ different environmental features regarding daily precipitation for the city of Rennes, France, from January 2021 to December 2025. We treat precipitation as the response and make it binary, categorizing days as ``dry'' when precipitation is less than 0.5mm and ``wet'' otherwise. 
\cbend

\item The Wisconsin Diagnostic Breast Cancer (WDBC) dataset\footnote{\href{https://archive.ics.uci.edu/dataset/17/breast+cancer+wisconsin+diagnostic}{https://archive.ics.uci.edu/dataset/17/breast+cancer+wisconsin+diagnostic}} used in~\cite{shin2014probability}. A total of $n=569$ subjects are diagnosed with breast tumors, either benign or malignant. Ten features of breast cell nuclei are measured for each subject, with the mean, standard error, and largest values recorded for each feature, leading to $p=30$ predictors in total.
\end{itemize}

\begin{table}[H]
\small
    \centering
\caption{
 {Misclassification risk, the area under the ROC curve, and computing time (in seconds) for each classifier applied to the three real datasets. In each case, the prediction exercise is carried out on the selected testing set.}}
\begin{tabular}{ p{3cm} p{0.1cm} p{0.8cm} p{0.8cm}p{0.8cm} p{0.011cm} p{0.8cm}p{0.8cm} p{0.8cm}}
\toprule
{Classifier} & \multicolumn{4}{c}{Random Forest}  &\multicolumn{4}{c}{knn} \\
 \cmidrule(l){3-5}\cmidrule(l){7-9}
 &{}& Miscl. risk &{AUC} & {Est. time}  & {}&Miscl. risk &{AUC} & {Est. time}\\
 \hline
  \text{Hill-Valley (HV)}  &&  & & & &  & \\
  \hline
  \text{No dimension reduction
  } && 0.437 & 0.563 & 3.95&&   0.481 & 0.516 & 1.48 \\
 \text{LLO$(\lambda=0)$}  && 0.212 & 0.855 & 3.16  && 0.429 & 0.597 & \textbf{0.73}\\
 \text{LLO$(\lambda>0)$}   && \textbf{0.115} & \textbf{0.952} & \textbf{1.44} && \textbf{0.126} & \textbf{0.953} & \textbf{0.73}\\

\hline
 \text{Rennes Precipitation (RP)}  &&  & & & &  & \\
  \hline
 \text{No dimension reduction
  } && 0.172 & 0.0.884 & 0.36 &&  0.168 & 0.909 & 3.11 \\
 \text{LLO$(\lambda=0)$}  && \textbf{0.154} & 0.918 & 0.16 && 0.159 & 0.919 & 2.64\\
 \text{LLO$(\lambda>0)$}   && \textbf{0.154} & \textbf{0.920} & \textbf{0.14} &&  \textbf{0.153}  & \textbf{0.922} & \textbf{1.93 }\\
 \hline

  \text{Wisconsin Diagnostic Breast Cancer (WDBC)} &&  & & & &  & \\
 \hline
 \text{No dimension reduction
  } && 0.064 & 0.979 & 0.88  && 0.058 & 0.972 & 0.41 \\
 \text{LLO$(\lambda=0)$} && 0.053 & 0.987 & 0.8 &&  0.058 & \textbf{0.985}  & \textbf{0.28}\\
 \text{LLO$(\lambda>0)$}  && \textbf{0.035} & \textbf{0.99} & \textbf{0.69} &&  \textbf{0.029} & 0.982 & 0.32\\
\bottomrule
\end{tabular}
\label{tab:realapp_classi}
\end{table}

Each dataset is divided at random into a training set and a testing set, approximately made of $70\%$ and $30\%$ of the original data, respectively. Table \ref{tab:realapp_classi} compares the performance of the complete workflow (dimension reduction and selection and then classification) using the full space of covariates, the covariates projected on the dimension reduction subspace provided by the proposed LLO$(\lambda>0)$ method, and its version obtained using the non-penalized version LLO$(\lambda=0)$, paired with either the \texttt{knn} or the \texttt{RandomForest} classifier, when applied to the testing set. The classification procedure using LLO$(\lambda>0)$ generally has a comparable or lower misclassification risk, a comparable or higher AUC (this can also be seen by comparing the ROC curves of each classification procedure, see 
Figure~\ref{fig:realapp_ROC}) 
with comparable or lower computing time with respect to the non-penalized version, and always improves substantially upon the classifier not featuring dimension reduction. 
We note that, as expected, the cross-validation procedure for the dimension of the reduction subspace involving the estimation of the matrix $M$ via LLO$(\lambda>0)$ tends to select fewer components than its analogue using LLO$(\lambda=0)$, with comparable or higher accuracy, as shown in the top panels in Figure~\ref{fig:comparison_data} for the nearest-neighbor classifier (see 
Figure~\ref{fig:Dim_selection_rf} for similar results with the Random Forest classifier). It is noted that in the WBDC real data analysis, all the eigenvalues of the empirical outer product $\widehat{M}$ were found to be 0 from dimension $d=15$ and $d=22$ onwards when using the LLO($\lambda>0$) and LLO($\lambda=0$) method, respectively.

\begin{figure}[t]
	\centering
	\begin{subfigure}{.32\textwidth}
		\centering
\includegraphics[width=1\linewidth]{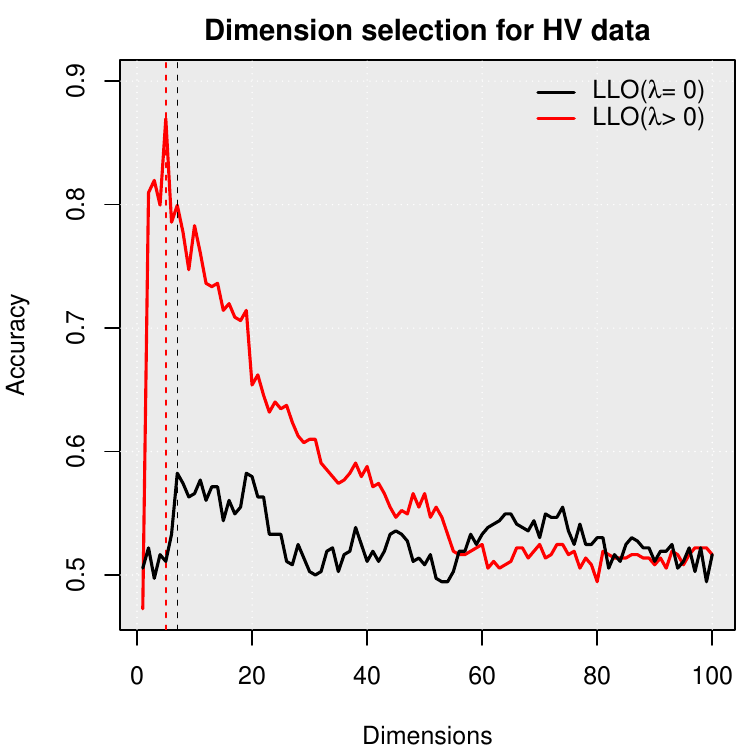} 
		\caption{HV -- Dimension selection}
	\end{subfigure} \hfill
\begin{subfigure}{.32\textwidth}
		\centering
		\includegraphics[width=1\linewidth]{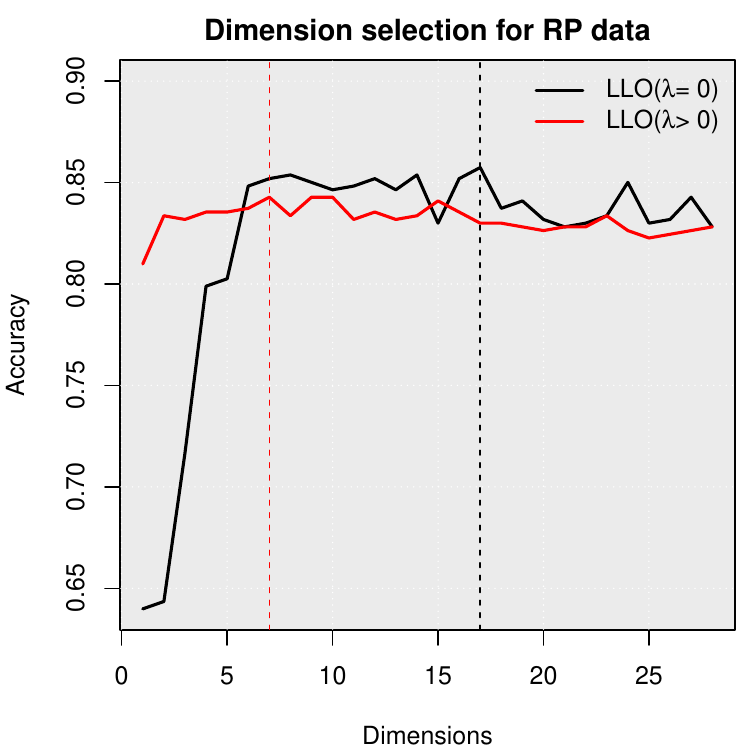}  
		\caption{\textcolor{black}{RP} -- Dimension selection}
	\end{subfigure} \hfill
    \begin{subfigure}{.32\textwidth}
		\centering
		\includegraphics[width=1\linewidth]{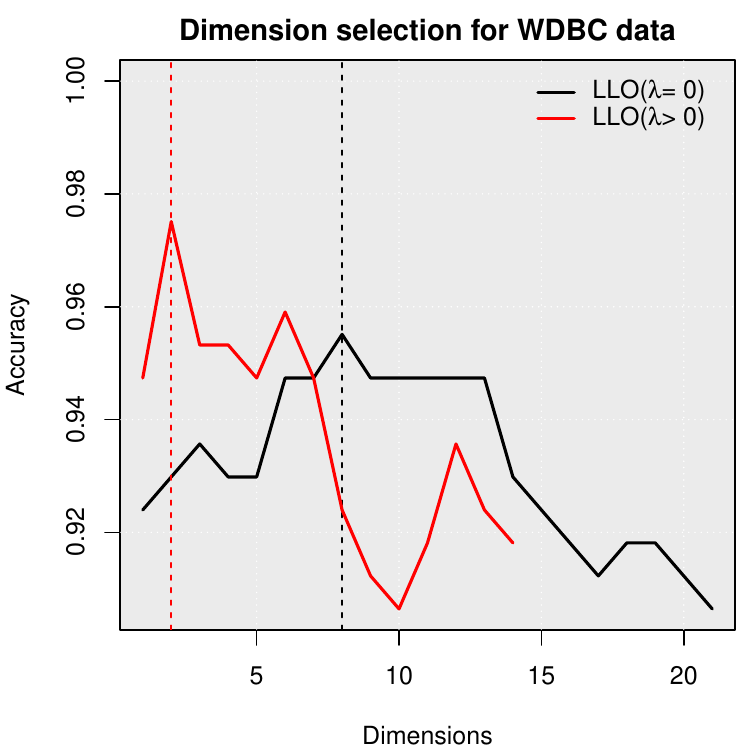} 
		\caption{\textcolor{black}{WDBC} -- Dimension selection}
	\end{subfigure} \hfill

    \begin{subfigure}{.32\textwidth}
		\centering
		\includegraphics[width=1\linewidth]{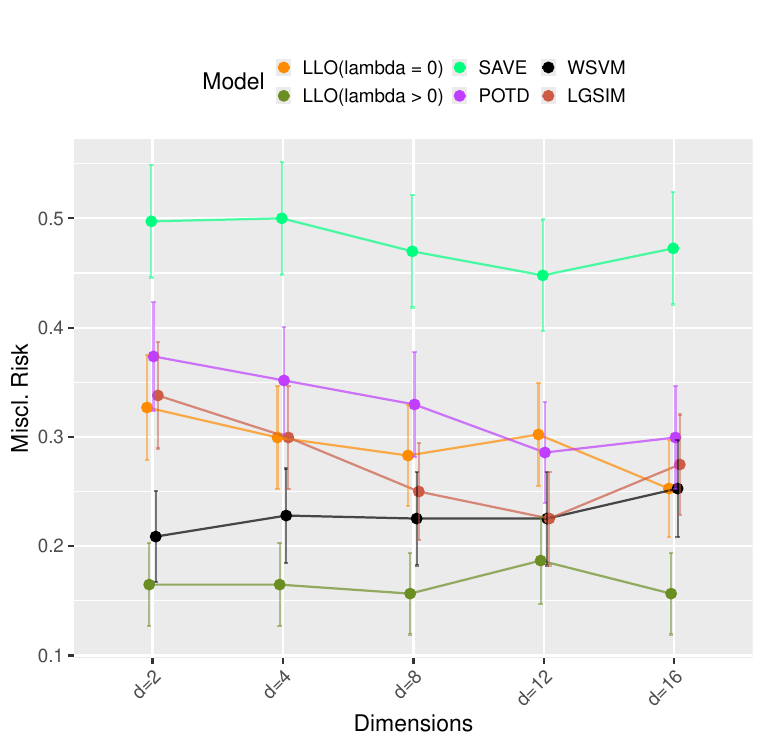}  
		\caption{HV -- Comparison between methods}
	\end{subfigure} \hfill
    \begin{subfigure}{.32\textwidth}
		\centering
\includegraphics[width=1\linewidth]{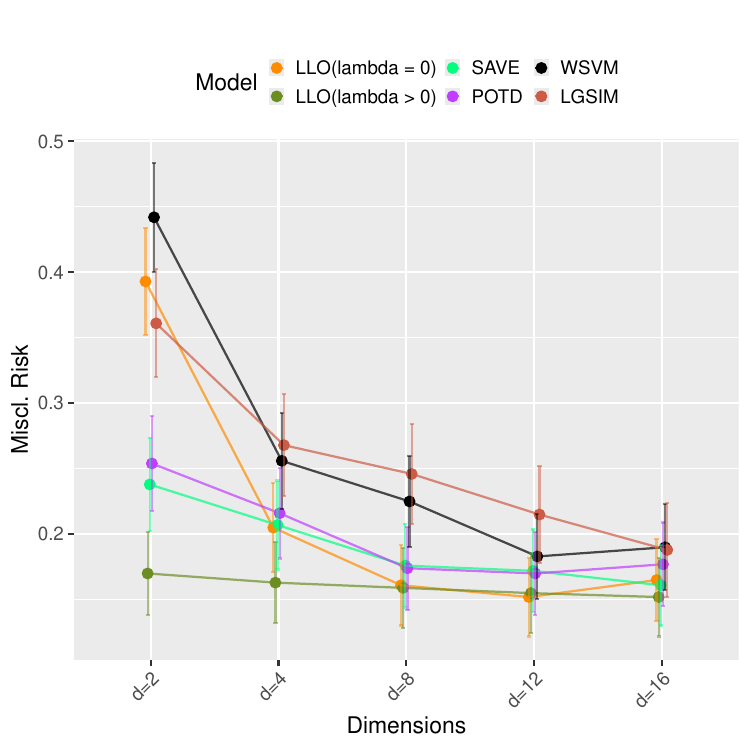}  
		\caption{RP -- Comparison between methods}
	\end{subfigure} \hfill
 	\begin{subfigure}{.32\textwidth}
		\centering
\includegraphics[width=1\linewidth]{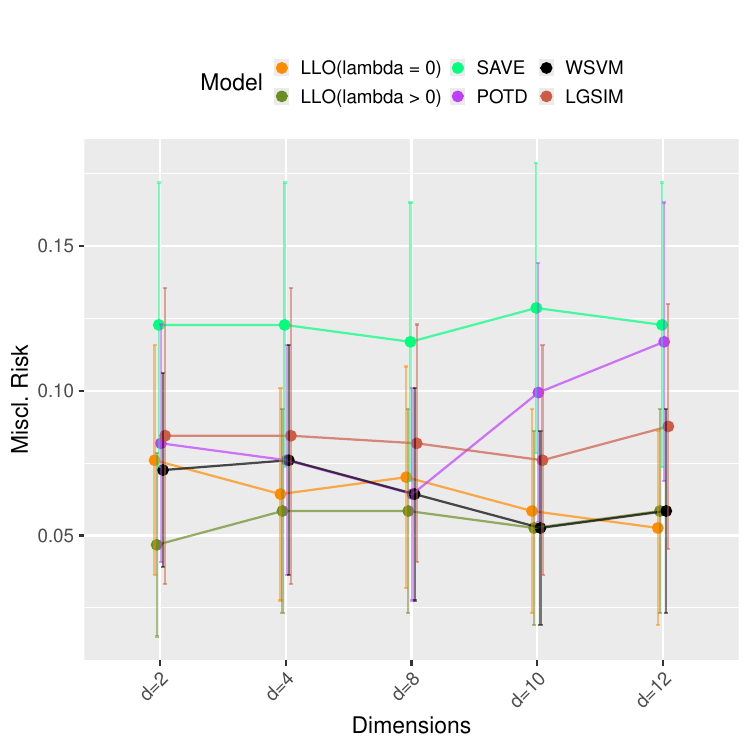}  
		\caption{WDBC -- Comparison between methods}
	\end{subfigure}
	\caption{Real data analysis - Top panels: Dimension selection through cross-validation for LLO($\lambda=0$) and LLO($\lambda>0$), using the nearest-neighbor classifier. Bottom panels: Estimated misclassification risk related to the nearest-neighbor classifier on the set of projected covariates on the estimated subspace produced by each method for a dimension $d$ of the central subspace 
    in $\{2, 4, 8, 12, 16\}$ for HV and RP and $\{2, 4, 8, 10, 12\}$ for WDBC, 
    with 95\% asymptotic Gaussian Wald-type confidence intervals (the sample size used is the size of the testing set).}
	\label{fig:comparison_data}
\end{figure}

\cbstart We also compare these classifiers to those obtained after dimensionality reduction using the SAVE, POTD, WSVM, and LGSIM methods. The misclassification risk is compared for estimated central subspaces of varying dimension $d$. 
\cbend It can be seen in the bottom panels of Figure~\ref{fig:comparison_data} that the classifier obtained after dimension reduction through the proposed LLO$(\lambda>0)$ approach is consistently superior to its competitors. The classifier based on LLO$(\lambda=0)$ dimension reduction performs worse but its performance gets closer to that of the LLO$(\lambda>0)$-based classifier as the dimension of the reduction subspace increases. Similar conclusions can be reached by considering 
ROC curves; 
see 
Figures~\ref{fig:comparison_data_roc_HV}--\ref{fig:comparison_data_roc_WDBC}.

\cbstart \section{Discussion and perspectives}
\label{sec:conclusion}

The proposed gradient estimation method is flexible and demonstrates several advantages over competing approaches for dimension reduction in classification. The theoretical framework developed in this work could potentially be extended to derive further results beyond those considered here, \textit{e.g.}~Nadaraya-Watson-type localization with nonrandom bandwidth. 

 A number of aspects warrant further investigation. A first direction for future research is the development of finite-sample guarantees that hold uniformly over the covariate space. To be more specific, it would be of interest to derive uniform upper bounds on quantities of the form 
\[
\sup_{x \in \mathbb{R}^p} \left\| \widehat{b}_n(x) - \nabla \ell(x) \right\|,
\]
where \(\|\cdot\|\) denotes a norm on \(\mathbb{R}^p\). Establishing such results is challenging, as the pointwise weak convergence result in Theorem~\ref{theo:main} cannot be directly extended to a \textit{uniform weak convergence} result in $\ell_\infty(S)$ because localization produces estimators that are not tight when viewed as functions of the covariate value. We nonetheless conjecture that it is possible to extend Corollary~\ref{coro:main} to a uniform result, although it requires moving beyond the asymptotic representations developed in \cite{hjopol1993}. Instead, one could rely on finite-sample uniform deviation bounds for minimizers of convex objective functions, as for instance in \cite{pmlr-v108-dalalyan20a}. Moreover, the analysis of the localization step must itself be carried out uniformly over the covariate space. Doing so would require stronger assumptions, such as uniform boundedness conditions of the form $f(x) \geq b>0$ and $\pi(x)\in [\pi_0,1-\pi_0]\subset (0,1)$ for all $x\in S$. This kind of uniform control has been investigated in the context of kernel estimators in~\cite{gine+g:02,dony2006uniform}, and for nearest-neighbor methods in~\cite{jiang2019non} and in~\cite{portier2021nearest}. Extending these techniques to the present framework of binary classification paired with LASSO penalization constitutes a nontrivial but promising line of research.

A second line of research is to investigate the behavior of the estimator considered in the high-dimensional setting. We recall that, in this article, we obtain a result characterizing the asymptotic behavior of the estimator as $n\to \infty$, in a fixed-$p$ setting. Within this fixed-dimensional framework, our result describes the limiting distribution of the (suitably rescaled) estimator; in the absence of localization, our analysis yields a corresponding asymptotic distribution result for the standard logistic Lasso. This framework is conceptually distinct from the high-dimensional setting, where the ambient dimension \( p \) may be large relative to the sample size \( n \). Such situations are typically studied under asymptotic frameworks in which \( p = p_n \) grows with \( n \), or alternatively where \( p \) is fixed but allowed to be sufficiently large without degrading much the statistical accuracy. The latter perspective is often supported by sharp non-asymptotic deviation inequalities. A common requirement in such analyses is the presence of low intrinsic complexity. For instance, in LASSO regression, one typically assumes that only a small subset of coefficients is nonzero, as emphasized in the work of \cite{tsybakov2009simultaneous}.

In the present framework, however, sparsity alone appears insufficient, because the nonparametric nature of the estimation introduces a bias term that scales poorly with the ambient dimension, namely, of order \( (1/n)^{1/p} \). Consequently, even if the estimated coefficients are sparse, any potential gains may be wiped out by this intrinsic bias term. To address this limitation and better capture the potential of our approach, it would be natural to exploit low-dimensional geometric structure in the covariates. Specifically, assuming that the data lie on (or near) a manifold of intrinsic dimension \( d_0 < p \) could significantly improve the bias scaling. This intuition is supported by recent work on nearest-neighbor regression with manifold-valued covariates \citep{NIPS2011_05f971b5}, as well as local linear regression using Nadaraya-Watson weights \citep{bickel}. In such settings, the bias term scales as \( (1/n)^{1/d_0} \), rather than \( (1/n)^{1/p} \), suggesting that the proposed method could naturally benefit from low-dimensional structure without requiring substantial modification. A rigorous investigation of this perspective is left for future work.

A third avenue for future work that we identify is the extension of the theory in order to accommodate other types of penalties. Two relevant examples would be the SCAD~\citep[Smoothly Clipped Absolute Deviation, see][]{fanli2001} penalty and MCP~\citep[Minimax Concave Penalty, see][]{zha2010}. These penalty functions are known to have oracle properties in identifying the active set and the efficient estimation of nonzero coefficient. They are, however, not convex, so that the theory developed in this paper does not apply. While it is beyond the scope of this paper, it will be interesting to investigate whether the oracle properties of the SCAD and MCP penalties extend to the local classification context considered here.

To conclude, we highlight one further avenue for future research. Most results in the sufficient dimension reduction literature concern the estimation of the subspace itself, and rarely address the use of the estimated basis for the downstream classification or regression task. Some results on regression function estimation exist: \cite{cadre2010dimension} consider a nearest-neighbour approach and establish consistency and rates of convergence, while \cite{forzani2024asymptotic} derive pointwise weak convergence for the Nadaraya--Watson estimator together with uniform consistency. Both of these references are confined to the regression setting, and we are not aware of any work establishing excess risk bounds in the classification context. We believe this is an important open question, particularly given the broad variety of methods available for dimension reduction and representation learning more generally. 
\cbend 
\section*{Acknowledgments}

T.~Ahmad acknowledges support from the R\'egion Bretagne through project SAD-2021-MaEVa. G.~Stupfler acknowledges support from grants ANR-19-CE40-0013 (ExtremReg project), ANR-23-CE40-0009 (EXSTA project) and ANR-11-LABX-0020-01 (Centre Henri Lebesgue), the TSE-HEC ACPR Chair ``Regulation and systemic risks'', and the Chair Stress Test, RISK Management and Financial Steering of the Foundation Ecole Polytechnique.

\bibliographystyle{chicago}
\bibliography{Bibliography}

\newpage
\renewcommand{\thetable}{S.\arabic{table}}
\setcounter{table}{0}
\renewcommand{\thefigure}{S.\arabic{figure}}
\setcounter{figure}{0}

\renewcommand{\theequation}{S.\arabic{equation}}
\setcounter{equation}{0}
\renewcommand{\thesection}{S.\arabic{section}}
\setcounter{section}{0}
\setcounter{page}{1}


\begin{center}
\section*{Supplementary material for \\ ``Nearest-neighbor LASSO logistic regression for the gradient''}

\end{center}
\section{Mathematical proofs}
\label{sec:proofs}

\subsection{Weak convergence of argmins of convex functions}

We first adapt a result from \cite{hjopol1993} to obtain a convergence result on the minima of our loss function, which has a very specific structure due to the LASSO penalty. Recall that a function $F: \mathbb R ^q \to \mathbb R $ is said to be $\mu$-strongly convex, for a given $\mu>0$, whenever $u\mapsto F (u) - \frac{1}{2}\mu\|u\|_2^2 $ is convex. In particular: 
\begin{itemize}
\item Strongly convex functions on $\mathbb R ^q$ have a unique global minimum on $\mathbb R ^q$: this follows from the fact that (i) strongly convex functions on $\mathbb R ^q$ are strictly convex and continuous, and (ii) an equivalent definition of strong convexity of $F$ is that, for any $x,x_0\in \mathbb R ^q$ and any subgradient $v$ of $F$ at $x_0$, 
\begin{equation}
\label{eqn:strong_cvx}
F(x)\geq F(x_0) + v^T (x-x_0) + \frac{\mu}{2} \| x-x_0 \|_2^2,
\end{equation}
and therefore strongly convex functions are bounded from below by strictly convex polynomials of degree 2, meaning that they tend to $+\infty$ as $\| x \|_2\to +\infty$.
\item If $\Gamma\in \mathbb R^{q\times q}$ is symmetric and positive definite, and $f: \mathbb R ^q \to \mathbb R $ is convex, then $F:u \mapsto u^T \Gamma  u + f(u)$ is $\mu$-strongly convex, with $\mu/2$ being the smallest eigenvalue of $\Gamma$.
\end{itemize}
We may now state our first result, on the convergence of minimizers of random convex functions approximated pointwise by random strongly convex functions.
\begin{lemma}
\label{lem:pollard}
Let $F:\mathbb R^q \to \mathbb R$ be $\mu$-strongly convex, $(S_n)$ be a random sequence and $B_n: u\in \mathbb R^q \mapsto u^T S_n + F(u) \in \mathbb R$. Let \cbstart {$\beta_n = \argmin _{u \in \mathbb R^q} B_n(u)$.} \cbend 
\begin{enumerate}[label=(\roman*)]
\item If $S_n = O_{\mathbb{P}}(1)$, then $\beta_n = O_{\mathbb{P}}(1)$.
\item Let moreover $A_n: \mathbb R^q \to \mathbb R$, $n\geq 1$, be a sequence of random convex functions. If, pointwise in $u\in \mathbb R^q$, 
\[
A_n (u) - B_n(u) \stackrel{\mathbb{P}}{\longrightarrow} 0,
\]
then $\argmin_{u\in \mathbb R^q} A_n(u)$ is nonempty for $n$ large enough and one can construct a measurable sequence $\alpha_n\in \mathbb R^q\cup\{\infty\}$, $n\geq 1$, such that $\alpha_n \in \argmin_{u\in \mathbb R^q} A_n(u)$ and $\| \alpha_n - \beta_n\|_2  = o_{\mathbb P} (1)$.
\end{enumerate}
\end{lemma}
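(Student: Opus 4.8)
The plan is to prove (i) directly from the strong-convexity inequality \eqref{eqn:strong_cvx}, and then to combine it with a ``convexity lemma'' uniform control to get (ii). \emph{For part (i):} adding the linear map $u\mapsto u^TS_n$ to the $\mu$-strongly convex $F$ leaves $u\mapsto B_n(u)-\tfrac12\mu\|u\|_2^2$ convex, so $B_n$ is itself $\mu$-strongly convex; hence $\beta_n$ is its unique minimizer and $0\in\partial B_n(\beta_n)$. Using \eqref{eqn:strong_cvx} with $B_n$ in place of $F$, at $x_0=\beta_n$ (with subgradient $0$) and $x=0$, gives $F(0)=B_n(0)\ge B_n(\beta_n)+\tfrac{\mu}{2}\|\beta_n\|_2^2$. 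Picking any $v\in\partial F(0)$ (nonempty since $F$ is finite and convex on $\mathbb R^q$) and using $F(\beta_n)\ge F(0)+v^T\beta_n$, we obtain $0\ge\beta_n^T(S_n+v)+\tfrac{\mu}{2}\|\beta_n\|_2^2$, so by Cauchy--Schwarz $\|\beta_n\|_2\le\tfrac{2}{\mu}\|S_n+v\|_2$. Since $v$ is deterministic and $S_n=O_{\mathbb P}(1)$, this gives $\beta_n=O_{\mathbb P}(1)$.

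\emph{For part (ii):} let $\alpha_n$ be a measurable selection from $\argmin_{u\in\mathbb R^q}A_n(u)$ whenever that set is nonempty, and $\alpha_n=\infty$ otherwise. Fix $\delta>0$ and set
\[
\mathcal E_n(\delta):=\Big\{\ \inf_{u:\,\|u-\beta_n\|_2=\delta}A_n(u)\ >\ A_n(\beta_n)\ \Big\}.
\]
I claim $\mathcal E_n(\delta)\subseteq\{\argmin A_n\neq\emptyset\}\cap\{\|\alpha_n-\beta_n\|_2<\delta\}$: on $\mathcal E_n(\delta)$ the finite convex (hence continuous) function $A_n$ attains its minimum over the compact ball $\bar B(\beta_n,\delta)$ at a point of the open ball, and such a local minimizer of a convex function is a global minimizer; moreover, by convexity of $A_n$ along the segment from $\beta_n$ to any putative global minimizer at distance $\ge\delta$, the value of $A_n$ where this segment meets the sphere would be at most $A_n(\beta_n)$, contradicting $\mathcal E_n(\delta)$. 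Hence it suffices to show $\mathbb P(\mathcal E_n(\delta))\to1$ for every $\delta>0$, which gives $\|\alpha_n-\beta_n\|_2=o_{\mathbb P}(1)$ and the asserted eventual nonemptiness of $\argmin A_n$. To bound $\mathbb P(\mathcal E_n(\delta))$ from below, use that $B_n$ is $\mu$-strongly convex with minimizer $\beta_n$, so \eqref{eqn:strong_cvx} yields $B_n(u)\ge B_n(\beta_n)+\tfrac{\mu}{2}\delta^2$ on the sphere $\|u-\beta_n\|_2=\delta$; by part (i) we may fix $R$ with $\mathbb P(\|\beta_n\|_2>R)<\varepsilon$ for large $n$, and then with $K:=\bar B(0,R+\delta)$ and $\eta:=\mu\delta^2/8$ one checks that on $\{\|\beta_n\|_2\le R\}\cap\{\sup_{u\in K}|A_n(u)-B_n(u)|<\eta\}$ every $u$ on that sphere satisfies $A_n(u)\ge B_n(u)-\eta\ge B_n(\beta_n)+\tfrac{\mu}{2}\delta^2-\eta>B_n(\beta_n)+\eta\ge A_n(\beta_n)$, so this event lies in $\mathcal E_n(\delta)$. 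Thus the proof reduces to showing $\sup_{u\in K}|A_n(u)-B_n(u)|\stackrel{\mathbb P}{\longrightarrow}0$ for every compact $K$.

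\emph{The main obstacle} is exactly this uniform-on-compacts convergence, which is where we adapt \cite{hjopol1993}. The mechanism is the deterministic convexity lemma: two convex functions that are within $\epsilon$ of each other at every point of a fixed, sufficiently fine finite grid covering a neighbourhood of $K$ are within $C\epsilon$ of each other throughout $K$, with $C$ governed only by a Lipschitz bound for the functions on that neighbourhood. Here $B_n(u)=u^TS_n+F(u)$ is, on a fixed compact neighbourhood $K'\supset K$, uniformly bounded and hence uniformly Lipschitz on $K$ with constant $O_{\mathbb P}(1)$ (because $S_n=O_{\mathbb P}(1)$ and $F$ is fixed and continuous); the pointwise hypothesis $A_n(u)-B_n(u)\to0$ in probability then makes the maximum of $|A_n-B_n|$ over the finite grid $o_{\mathbb P}(1)$, which in turn controls the oscillation of $A_n$ on $K'$, so the convexity lemma applies to both functions and delivers $\sup_{u\in K}|A_n(u)-B_n(u)|=o_{\mathbb P}(1)$. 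Combining the two events $\{\|\beta_n\|_2\le R\}$ and $\{\sup_{u\in K}|A_n-B_n|<\eta\}$ gives $\liminf_n\mathbb P(\mathcal E_n(\delta))\ge1-\varepsilon$ for arbitrary $\varepsilon>0$, completing the argument.
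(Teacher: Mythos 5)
Your proof is correct, and it reaches the same two conclusions by a partly different and somewhat more self-contained route. For (i), the paper shows that the argmin map $s\mapsto u(s)$ of $u\mapsto u^Ts+F(u)$ is globally Lipschitz (by playing the strong-convexity inequality at $u(s)$ and $u(t)$ against each other) and then uses that the continuous image of a compact set is compact; you instead extract the explicit bound $\|\beta_n\|_2\le \tfrac{2}{\mu}\|S_n+v\|_2$ directly from the first-order condition $0\in\partial B_n(\beta_n)$ and a single subgradient $v\in\partial F(0)$. Your version is more direct and quantitative; the paper's buys the stability of the minimizer in $s$, which is what it actually reuses. For (ii), the skeleton is identical — strong convexity of $B_n$ yields the margin $\tfrac{\mu}{2}\delta^2$ on the sphere $\|u-\beta_n\|_2=\delta$, the problem is localized to $\sup_K|A_n-B_n|\stackrel{\mathbb P}{\to}0$ on compacts containing $\beta_n$ with high probability, and convexity upgrades pointwise to uniform convergence — but where the paper outsources the localization inequality and the measurable selection to Lemma~2 of \cite{hjopol1993} (and the appendix of \cite{nie1992}), and the uniform convergence to their Lemma~1, you re-derive the localization event $\mathcal E_n(\delta)$ from first principles (correctly: a minimum of a convex function attained in the open ball is global, and a far-away global minimizer would violate the sphere inequality by convexity along the segment) and sketch the grid-plus-Lipschitz mechanism behind the convexity lemma. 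The one point you assert rather than prove is the existence of a \emph{measurable} selection $\alpha_n$ from $\argmin A_n$; this is exactly the technical point for which the paper leans on the Niemiro/Hjort--Pollard citation, so you should either cite it likewise or invoke a measurable selection theorem explicitly. Otherwise the argument is complete.
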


\begin{proof}[Proof of Lemma~\ref{lem:pollard}] (i) For an arbitrary $s\in \mathbb R^q$, let $B_s: u\in \mathbb R^q \mapsto u^T s + F(u) \in \mathbb R$, so that $B_n\equiv B_{S_n}$. Then $B_s$ is $\mu$-strongly convex and has a unique minimizer $u(s)\in \mathbb R^q$ for any $s$. Let $s,t\in \mathbb R^q$. The characterization~\eqref{eqn:strong_cvx} of strong convexity and the fact that $u(s)$ (resp.~$u(t)$) minimizes $B_s$ (resp.~$B_t$) together imply that
\[
B_{s} ( u(t) ) - B_{s} ( u(s) ) \geq \frac{\mu}{2} \| u(t) - u(s)\|_2^2 
\]
and 
\[
B_{t} ( u(s) ) - B_{t} ( u(t) ) \geq \frac{\mu}{2} \| u(t) - u(s)\|_2^2.
\]
Consequently
\begin{align*}
\mu \| u(t) - u(s)\|_2  ^2 &\leq   B_{s} ( u(t) ) - B_{t} ( u(t) ) + B_{t} ( u(s) ) -B_{s} ( u(s) ) \\
&=  (u(t)- u(s))^T (s - t) \\
&\leq \|u(t) - u(s)\| _2 \|t - s\|_2 .    
\end{align*}
It follows that $u$ is Lipschitz continuous. Let now $\varepsilon>0$. Since $S_n=O_{\mathbb{P}}(1)$ and $\beta_n=u(S_n)$, there is a compact set $K\subset \mathbb{R}^q$ such that $\liminf_{n\to\infty} P(\beta_n\in u(K)) \geq \liminf_{n\to\infty} P(S_n\in K)\geq 1- \varepsilon$. By continuity of $u$, the image $u(K)$ of $K$ by $u$ is compact, so that indeed $\beta_n = O_{\mathbb{P}}(1)$.
\vskip1ex
\noindent
(ii) Lemma~2 in \cite{hjopol1993} and the discussion right before it~\citep[drawn from the appendix in][]{nie1992} yield the existence of a measurable sequence of minima $\alpha_n$ of $A_n$, satisfying
\begin{align*}
    &\mathbb{P} ( |\alpha_n -  \beta_n | \geq \delta ) \\
    &\leq \mathbb{P} \left( \sup_{ \|u -\beta_n \|_2\leq \delta} |A_n(u) -B_n(u) | \geq \frac{1}{2} \inf_{\| u-\beta_n\|_2=\delta} (B_n(u) - B_n(\beta_n)) \right).
\end{align*}
By strong convexity of $B_n$, 
\[
\mathbb{P} ( |\alpha_n -  \beta_n | \geq \delta ) \leq \mathbb{P} \left( \sup_{ \|u -\beta_n \|_2\leq \delta} |A_n(u) -B_n(u) | \geq \frac{\delta^2\mu}{4} \right).
\]
Consequently, it is enough to show that 
\[
\sup_{ \|u -\beta_n \|\leq \delta} |A_n(u) -B_n(u) | = o_{\mathbb{P}} (1).
\]
Let $\eta>0$ and, for any compact set $K\subset \mathbb{R}^q$, let $K_{\delta}$ be the (compact) set of those points $x\in \mathbb{R}^q$ whose distance to $K$ is not greater than $\delta$. Then 
\begin{align*}
\mathbb{P} \left(\sup_{ \|u-\beta_n \|_2\leq \delta} |A_n(u) -B_n(u) |  >\eta \right) &\leq \mathbb{P} \left(\sup_{ u\in K_ \delta} |A_n(u) -B_n(u) |  >\eta \right) \\
    &+ \mathbb{P}( \beta_n\notin K ).
\end{align*}
Combine Lemma 1 in \cite{hjopol1993} and (i) of the present Lemma to obtain that the left-hand side converges to 0, which is the required result.
\end{proof}

\subsection{Functional weak convergence of certain nearest-neighbor empirical processes (Theorem~\ref{theo:tightness} and its proof)}
\label{sec:proofs:tightness} 

\cbstart {To prove 
Theorem~\ref{theo:tightness}, }\cbend we first recall the following lemma on the convergence of the $k$-NN bandwidth; see Lemma~1 in~\cite{portier2021nearest}.
\begin{lemma}[\cite{portier2021nearest}]
\label{lem:portier}
Suppose that \ref{cond:new1} is fulfilled. Assume that $k:=k_n \to \infty$ is such that $k/n \to 0$. Then $\widehat{\tau}_{n,k}(x)/\tau_{n,k}(x) \stackrel{\mathbb{P}}{\longrightarrow} 1$.
\end{lemma}
 Recall the notation
\[
Z_n ( \tau ) = \frac{1}{\sqrt{k}} \sum_{i=1}^n \left\{\Psi_n (Y_i,X_i) \ind_{B(x,\tau )}(X_i) - \mathbb E [\Psi_n (Y,X) \ind_{B(x,\tau )} (X) ]  \right\}.
\]
We then state a result on the weak convergence of $Z_n(\tau)$ as a stochastic process, which is the key to the proof of Theorem~\ref{theo:tightness}. Set $s\wedge t=\min(s,t)$ and $s\lor t=\max(s,t)$ for $s,t\in \mathbb{R}$, and recall the notation $V_p$ for the volume of the unit Euclidean ball in $\mathbb{R}^p$. Let also $\ell^\infty ([1/2,3/2])$ denote the space of uniformly bounded vector-valued functions defined on $[1/2,3/2]$ (we do not emphasize the dimension of the image space for the sake of notational convenience). This is a metric space with respect to the uniform metric $d(f,g) = \sup_{t\in[1/2,3/2]} \| f(t)-g(t)\|_2$.
\begin{lemma}[Tightness and weak convergence of $Z_n$] 
\label{lem:tightness}
Let $E$ be a nonempty and finite set. Assume that the data is made of independent copies  $(Y_i,X_i)_{1\leq i\leq n}$  of the random pair $(Y,X) \in E\times \mathbb R^p$ and that \ref{cond:new1} is fulfilled. Let $\Psi_n : E \times \mathbb R^p \to \mathbb R^q $ be a sequence of measurable vector-valued functions and suppose 
that there is a positive integer $n_0$ such that 
\[
\Psi_{\infty} :=\sup_{n\geq n_0} \, \sup_{z\in A_{n,k}(x)} \, \max_{y\in E} \| \Psi_n(y,z)\|_2 <\infty,
\]
where $A_{n,k}(x) = B(x,(3/2)^{1/p}\tau_{n,k}(x))$. 
\begin{enumerate}[label=(\roman*)]
\item If $k:=k_n  \to \infty$ is such that $k/n \to 0$, then the stochastic process
\[
\left\{ Z_n (t ^{1/p}\tau_{n,k}(x)) \right\}_{ t \in [1/2,3/2] }
\]
is 
tight in $\ell^\infty ([1/2,3/2])$.
\item Let $\Sigma_n^2(X) = \mathbb{E}[ \Psi_n (Y, X)\Psi_n (Y, X)^T | X  ] $. If moreover there is a (positive semidefinite) matrix-valued function $t\mapsto \Sigma^2(t,x)$ such that 
\[
\forall t\in [1/2,3/2], \ \int_{B(0,1)} \Sigma_n^2( x + \tau_{n,k}(x) t^{1/p} v ) \diff v \to V_p \Sigma^2(t,x), 
\]
then this same stochastic process converges weakly in $\ell^\infty ([1/2,3/2])$ 
to a continuous Gaussian process with covariance function $(s,t)\mapsto (s \wedge t) \Sigma^2(s \wedge t,x)$.
\end{enumerate}
\end{lemma}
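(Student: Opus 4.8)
The plan is to treat separately the finite-dimensional convergence and the asymptotic tightness of the rescaled, c\`adl\`ag process $W_n:t\mapsto Z_n(t^{1/p}\tau_{n,k}(x))$ on $[1/2,3/2]$: part~(i) needs only the tightness step, while part~(ii) combines both. A computation used repeatedly is that, for bounded measurable $\phi$ on $\reals^p$ and $1/2\le a\le b\le 3/2$, the substitution $z=x+\tau_{n,k}(x)\,c^{1/p}v$ with $v\in B(0,1)$ gives $\int_{B(x,c^{1/p}\tau_{n,k}(x))}\phi(z)\,\diff z=(\tau_{n,k}(x))^p\,c\int_{B(0,1)}\phi\big(x+\tau_{n,k}(x)c^{1/p}v\big)\,\diff v$; combined with the continuity and positivity of $f_X$ from~\ref{cond:new1}, with $\tau_{n,k}(x)\to 0$, and with the identity $n(\tau_{n,k}(x))^p/k=1/(f_X(x)V_p)$, this yields two facts: (a) $\frac{n}{k}\,\expec\big[\trace\Sigma_n^2(X)\,\ind_A(X)\big]\le C\,(b-a)$ for the annulus $A=B(x,b^{1/p}\tau_{n,k}(x))\setminus B(x,a^{1/p}\tau_{n,k}(x))$ and all $n$ large, and, under the extra hypothesis of part~(ii), (b) $\frac{n}{k}\,\expec\big[\Psi_n(Y,X)\Psi_n(Y,X)^T\ind_{B(x,c^{1/p}\tau_{n,k}(x))}(X)\big]\to c\,\Sigma^2(c,x)$ for $c\in[1/2,3/2]$; moreover $\|\expec[\Psi_n(Y,X)\ind_{B(x,c^{1/p}\tau_{n,k}(x))}(X)]\|_2=O(k/n)$.

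For the finite-dimensional distributions, fix $1/2\le t_1<\cdots<t_m\le 3/2$ and, via the Cram\'er--Wold device, consider $S_n=\sum_{l=1}^m a_l^T W_n(t_l)$ for arbitrary $a_1,\dots,a_m\in\reals^q$. This is a normalised sum $k^{-1/2}\sum_{i=1}^n\xi_{n,i}$ of i.i.d.\ centred summands with $|\xi_{n,i}|\le 2\Psi_\infty\sum_l\|a_l\|_2$, so the Lindeberg condition holds as soon as $k\to\infty$; it remains to compute $\Var(S_n)=\tfrac nk\expec[\xi_{n,1}^2]$. Expanding the square, using that the balls $B(x,t_l^{1/p}\tau_{n,k}(x))$ are nested so products of their indicators reduce to the indicator of the innermost one, dropping the mean-correction terms by the $O(k/n)$ bound above, and invoking fact~(b), one obtains $\Var(S_n)\to\sum_{l,l'}a_l^T\big(t_{l\wedge l'}\,\Sigma^2(t_{l\wedge l'},x)\big)a_{l'}$. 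This identifies the weak limit of $(W_n(t_1),\dots,W_n(t_m))$ as the Gaussian vector whose covariance blocks are $(s\wedge t)\,\Sigma^2(s\wedge t,x)$, i.e.\ the finite-dimensional distributions of the announced Gaussian process.

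For tightness, I would establish the classical product-moment criterion for c\`adl\`ag processes, uniformly in $n$: there exist $C<\infty$ and $n_0$ with
\[
\expec\big[\,\|W_n(t)-W_n(s)\|_2^2\,\|W_n(s)-W_n(r)\|_2^2\,\big]\le C\,(t-r)^2 \qquad (n\ge n_0,\ 1/2\le r\le s\le t\le 3/2).
\]
Writing $W_n(t)-W_n(s)=k^{-1/2}\sum_i u_i$ and $W_n(s)-W_n(r)=k^{-1/2}\sum_i v_i$, where $u_i,v_i$ are centred and built from the indicators of the \emph{disjoint} annuli with radii $(s^{1/p}\tau_{n,k}(x),t^{1/p}\tau_{n,k}(x))$ and $(r^{1/p}\tau_{n,k}(x),s^{1/p}\tau_{n,k}(x))$, I expand the product over $4$-tuples of indices; by independence of the observations only four index-coincidence patterns survive. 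The pattern pairing the two factors separately is at most $\expec\|W_n(t)-W_n(s)\|_2^2\,\expec\|W_n(s)-W_n(r)\|_2^2$, which by fact~(a) is $\le C'(t-s)(s-r)\le C'(t-r)^2/4$; the fully diagonal pattern and the two crossed patterns are bounded by $\varepsilon_n(t-r)^2$ with $\varepsilon_n\to 0$, because on each atom at most one of $u_i,v_i$ fails to reduce to its $O(k/n)$ mean-correction (disjointness of the annuli) and because $\expec[u_1v_1^T]=-\expec[\Psi_n\ind_A]\expec[\Psi_n\ind_{A'}]^T$ has Frobenius norm $O((k/n)^2)$. The displayed bound furnishes the modulus-of-continuity control needed for asymptotic tightness; since every jump of $W_n$ is bounded by $2\Psi_\infty/\sqrt k\to 0$, the limit has no jumps, so asymptotic tightness in fact holds in $\ell^\infty([1/2,3/2])$ and the weak limit is continuous. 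Combining this with the finite-dimensional convergence of the previous paragraph proves~(ii); the tightness statement on its own is~(i).

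The step I expect to be the main obstacle is the bookkeeping in the product-moment bound: one must check that each non-leading term is not merely $o(1)$ but $o(1)\cdot(t-r)^2$ \emph{uniformly} over $r\le s\le t$, which is precisely what makes the moment criterion applicable. This hinges on the exact normalisation $n(\tau_{n,k}(x))^p/k=1/(f_X(x)V_p)$ and on extracting the extra powers of $k/n$ (equivalently of $\tau_{n,k}(x)^p$) that disjointness of the two annuli provides in the crossed and diagonal terms.
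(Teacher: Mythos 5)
Your proposal is correct, but it follows a genuinely different route from the paper's proof --- one the paper itself flags as "an alternative route" (via Theorem 13.4--13.5 of Billingsley) without carrying it out. The paper writes $u^T Z_n(t^{1/p}\tau_{n,k}(x))=\sqrt{n}(P_n-P)(f_{n,t})$ and invokes the bracketing central limit theorem for changing function classes (Theorem~2.11.23 of van der Vaart and Wellner), so that finite-dimensional convergence and asymptotic equicontinuity in $\ell^\infty([1/2,3/2])$ are delivered by a single result once the envelope, Lindeberg, $L_2$-continuity and bracketing-entropy conditions are checked --- the brackets being the nested indicators $\sqrt{n/k}\,\ind_{B(x,t_j^{1/p}\tau_{n,k}(x))}$. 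You instead prove the finite-dimensional limits by hand (Cram\'er--Wold plus a Lindeberg CLT for the bounded triangular array, with the covariance identified exactly as in the paper via the change of variables and $n\tau_{n,k}^p(x)/k=1/(f_X(x)V_p)$), and you obtain tightness from the product-moment criterion $\expec[\|W_n(t)-W_n(s)\|_2^2\|W_n(s)-W_n(r)\|_2^2]\le C(t-r)^2$ over adjacent increments, exploiting the disjointness of the two annuli to kill the diagonal fourth-moment term that would otherwise only be $O(k^{-1}(t-r))$; the passage from the Skorokhod to the uniform topology then uses that the jumps of $W_n$ are $O(\Psi_\infty/\sqrt{k})\to 0$. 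Both arguments are sound. Your route is more elementary and self-contained (only moment computations and the classical c\`adl\`ag criterion), at the cost of the combinatorial bookkeeping you yourself identify as the delicate step and of having to separately note stochastic boundedness at a fixed $t$ (which your "fact (a)" variance bound supplies via Chebyshev --- worth stating explicitly when you assert part (i)); the paper's route outsources all of this to a single empirical-process theorem and lands directly in $\ell^\infty([1/2,3/2])$, which is the form in which the result is consumed downstream.
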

We shall prove (i) by showing tightness of any real-valued projection of the stochastic process of interest with respect to the uniform metric on $\ell^{\infty}([1/2,3/2])$ using general empirical process theory from~\cite{wellner1996} and~\cite{vandervaart1998} linking weak convergence on $\ell^{\infty}([1/2,3/2])$ to asymptotic uniform equicontinuity on $[1/2,3/2]$, equipped with the standard distance between real numbers. Continuity of the limiting process in (ii) on $[1/2,3/2]$ will then follow from Theorem 1.5.7 and Addendum 1.5.8 p.37 in~\cite{wellner1996}. As a side note, let us highlight that in doing so we shall in fact check the usual tightness conditions of the space of continuous functions on $[1/2,3/2]$, see Theorem~7.3 p.82 in~\cite{bil1999}; since the stochastic process of interest actually lives in the space of c\`adl\`ag functions on $[1/2,3/2]$, Theorem 13.4 p.142 in~\cite{bil1999} and its Corollary provide an alternative route to the proof of weak convergence and continuity of the limiting process.

We first recall a few definitions. Given a probability measure $Q$ on a measurable space $(S,\mathcal S)$, the metric space of square-integrable, Borel measurable real-valued functions on $S$ with respect to $Q$ is defined as
\begin{align*}
    &L_2(Q) \\
    &= \left\{ g: (S,\mathcal S)\to (\mathbb R,\mathcal{B}(\mathbb R)) \text{ such that  } \| g\|_{L_2(Q)} ^ 2 := Q(g^2) := \int_S g^2 dQ < \infty \right\} . 
\end{align*}
For two functions $\underline{f}, \overline{f} \in L_2(Q)$, the \textit{bracket} $[\underline{ f}, \overline{f}]$ is the set of all functions $g$ in $L_2(Q)$ such that $\underline{f} \leq g\leq  \overline{f}$ on $S$. A bracket $[\underline{ f}, \overline{f}]$ such that $\| \underline{ f} -  \overline{f} \| _{L_2(Q)} \leq \varepsilon$ is called an \textit{$\varepsilon$-bracket}, and for any $\mathcal G \subset L_2(Q)$, the \textit{$\varepsilon$-bracketing number}, denoted by $\mathcal N _{[\,]} (\mathcal G , L_2(Q) , \varepsilon)$, is defined as the smallest number of $\varepsilon$-brackets needed to cover $\mathcal G$. Finally, call an \textit{envelope function} for $\mathcal G$ any function $G:S\to \mathbb R$ such that $|g| \leq G$ on $S$ for any $g\in \mathcal G$.
\begin{proof}[Proof of Lemma~\ref{lem:tightness}] (i) Work on the space $T=[1/2,3/2]$ equipped with the usual distance between real numbers. Fix $u\in \mathbb{R}^q \setminus \{ 0 \}$. Let $P$ (resp.~$P_n$) denote the probability measure of $(Y,X)$ (resp.~the empirical probability measure on the set of pairs $(X_i,Y_i)$, $1\leq i\leq n$), so that $u^T Z_n (t^{1/p}\tau_{n,k}(x))=\sqrt n (P_n - P )( f_{n,t} )$ with 
\[
f_{n,t} (y,z) = \sqrt{ \frac{n}{k} } u^T \Psi_n (y,z) \ind_{B(x,t^{1/p}\tau_{n,k}(x) )} (z).
\]
An obvious envelope function for the set of measurable functions $\mathcal F_n = \{f_{n,t} :  t\in T\}$ is 
\[
F_n (y,z) = \sqrt{ \frac{n}{k} } \| u \|_2 \Psi_\infty  \ind_{B(x,(3/2)^{1/p}\tau_{n,k}(x) )} (z).
\]
Notice that when $n$ is large enough, $f_X(z)\leq 2 f_X (x)$ for any $z\in B( x,(3/2)^{1/p}\tau_{n,k}(x) )$. Therefore, for $n$ large enough, 
\[
P(F_n^2) \leq 2 f_X(x) \| u \|_2^2 \Psi_\infty^2 \times \frac{n}{k} \int_{B(x,(3/2)^{1/p}\tau_{n,k}(x) )} \diff z = 3 \Psi _\infty ^2 <\infty.     
\]
According to Theorem 2.11.23 p.221 in~\cite{wellner1996}~\citep[see also Theorem~19.28 p.282 in][]{vandervaart1998}, it is then sufficient to show that 
\begin{align}
\label{eqn:vdvw_1}
\forall \eta>0, \ \lim_{n\to\infty} P(F_n^2 \ind_{\{ F_n > \eta \sqrt n \}} ) &= 0,\\
\label{eqn:vdvw_2}
\mbox{and for every } \delta_n\downarrow 0, \ \lim_{n\to\infty}  \sup_{|t-s|\leq \delta_n } P [(f_{n,t} - f_{n,s} ) ^2]  &= 0 \\ 	
\label{eqn:vdvw_3}
\mbox{and } \lim_{n\to\infty}  \int _ 0 ^{\delta_n}   \sqrt { \log \mathcal N_{[\,]} \left(\mathcal F_n,  L_2( P )  ,  \varepsilon \| F_n \| _ {L_2( P) } \right)} \,\diff \varepsilon &=0.
\end{align}
Clearly 
\[
\forall \eta>0, \ P(F_n^2\ind_{\{ F_n >\eta \sqrt n \}}) \leq \frac{n}{k} \| u \|_2^2 \Psi _\infty ^2  \ind _ {\{ \| u \|_2 \Psi _\infty >\eta \sqrt k \}} = 0 \mbox{ for } n \mbox{ large enough}.
\]
This shows~\eqref{eqn:vdvw_1}. To prove~\eqref{eqn:vdvw_2}, pick $s,t\in [1/2,3/2]$ and write, for $n$ large enough,
\begin{align*}
P [(f_{n,t} - f_{n,s} ) ^2] &\leq 2 f_X(x) \| u \|_2^2 \Psi _\infty ^2 \times \frac{n}{k} \int_{\mathbb{R}^p} \big( \ind_{B(x,(s\lor t)^{1/p}\tau_{n,k}(x))}(z) -  \\
    & \qquad \qquad \qquad \qquad \qquad \qquad \ind_{B(x,(s\wedge t)^{1/p}\tau_{n,k}(x))} (z) \big)\diff z \\
    &= 2 \| u \|_2^2 \Psi _\infty ^2  |t-s|.
\end{align*}
This proves~\eqref{eqn:vdvw_2}. Convergence~\eqref{eqn:vdvw_3} of the sequence of bracketing integrals is easily obtained by following the proof of Lemma~2 in \cite{portier2021nearest}: brackets for $\sqrt{ \frac{n}{k} }  \ind_{B(x,t^{1/p}\tau_{n,k}(x))}$, $t\in [1/2,3/2]$, can be constructed as 
\[
\left[ \sqrt{ \frac{n}{k} }  \ind_{B(x,t_j^{1/p}\tau_{n,k}(x))}, \sqrt{ \frac{n}{k} }   \ind_{B(x,t_{j+1}^{1/p}\tau_{n,k}(x))} \right] 
\]
where the $t_j$ make up an $\varepsilon$-spaced set of increasing points in $[1/2,3/2]$. This concludes the proof of (i).
\vskip1ex
\noindent
(ii) Fix again $u\in \mathbb{R}^q \setminus \{ 0 \}$ and pick $s,t\in [1/2,3/2]$. By the Cram\'er-Wold device, and according to Theorem 2.11.23 p.221 in~\cite{wellner1996}, weak convergence to a (tight) Gaussian process will be guaranteed if we can show that, with the notation of (i), $P ( f_{n,t} f_{n,s} ) - P ( f_{n,t} ) P ( f_{n,s} )$ converges to $(s\wedge t) u^T \Sigma^2(s \wedge t,x) u$. This Gaussian process will then necessarily be centered because $Z_n(\tau)$ is so, and each of its univariate projections will be continuous on $[1/2,3/2]$ by Theorem~1.5.7 p.37 in~\cite{wellner1996} and its Addendum 1.5.8. 
\vskip1ex
\noindent
First of all, $|P ( f_{n,t} )| \leq P(F_n)$ and, for $n$ large enough, 
\[
P(F_n) \leq 2 f_X(x) \| u \|_2 \Psi_\infty \times \sqrt{\frac{n}{k}} \int_{B(x,(3/2)^{1/p}\tau_{n,k}(x) )} \diff z = O\left( \sqrt{\frac{k}{n}} \right) \to 0. 
\]
It then suffices to prove that $P ( f_{n,t} f_{n,s} )\to (s\wedge t) u^T \Sigma^2(s \wedge t,x) u$. Now
\[
P ( f_{n,t} f_{n,s} ) = \frac{n}{k} u^T \mathbb{E} [ \Psi_n (Y,X) \Psi_n (Y,X)^T \ind_{B(x,(s\wedge t)^{1/p}\tau_{n,k}(x) )} (X) ] u.
\]
A change of variables gives
\[
P ( f_{n,t} f_{n,s} ) = \frac{n}{k} (s\wedge t) \tau_{n,k}^p(x) \times u^T \left( \int_{B(0,1)} ( f_X \Sigma_n^2 )( x + \tau_{n,k}(x) (s\wedge t)^{1/p} v ) \diff v \right) u.
\]
Finally 
\begin{align*}
&\left| u^T \int_{B(0,1)} (f_X( x + \tau_{n,k}(x) (s\wedge t)^{1/p} v ) - f_X(x)) \Sigma_n^2( x + \tau_{n,k}(x) (s\wedge t)^{1/p} v ) \diff v \, u \right| \\
&\leq \sup_{v\in B(0,1)} |f_X( x + \tau_{n,k}(x) (s\wedge t)^{1/p} v ) - f_X(x)|  \\ & \times u^T \left( \int_{B(0,1)} \Sigma_n^2( x + \tau_{n,k}(x) (s\wedge t)^{1/p} v ) \diff v \right) u \\
&\to 0
\end{align*}
and then
$
P ( f_{n,t} f_{n,s} ) \to (s\wedge t) u^T \Sigma^2(s \wedge t,x) u.
$
The proof is complete.
\end{proof}

We can now combine Lemmas~\ref{lem:portier} and~\ref{lem:tightness} to write a proof of Theorem~\ref{theo:tightness}.
\begin{proof}[Proof of Theorem~\ref{theo:tightness}] Write
\[ 
Z_n ( \widehat{\tau}_{n,k}(x) ) =  Z_n( t_n^{1/p} \tau_{n,k}(x) ) = Z_n( \tau_{n,k}(x) )  + \{ Z_n( t_n^{1/p} \tau_{n,k}(x) ) - Z_n( \tau_{n,k}(x) ) \}
\]
with $t_n = (\widehat{\tau}_{n,k}(x) / \tau_{n,k}(x))^p$. 
\vskip1ex
\noindent
(i) By Lemma~\ref{lem:portier}, $t_n\stackrel{\mathbb{P}}{\longrightarrow} 1$, so that by tightness of the stochastic process $\{ Z_n (t ^{1/p}\tau_{n,k}(x)) \}_{ t \in [1/2,3/2] }$ in $\ell^\infty ([1/2,3/2])$ following from Lemma~\ref{lem:tightness}(i), one has $Z_n ( \widehat{\tau}_{n,k}(x) ) = O_{\mathbb{P}}(1)$. 
\vskip1ex
\noindent
(ii) By Lemma~\ref{lem:tightness}(ii), $Z_n( \tau_{n,k}(x) )$ converges weakly to a Gaussian distribution with mean $0$ and covariance matrix $ \Sigma^2(1,x)$, and the stochastic process $\{ Z_n (t ^{1/p}\tau_{n,k}(x)) \}_{ t \in [1/2,3/2] }$ converges weakly to a continuous Gaussian process in $\ell^\infty ([1/2,3/2])$. 
It follows that $| Z_n( t_n^{1/p} \tau_{n,k}(x) ) - Z_n( \tau_{n,k}(x) )| \to 0$ in probability, so that the desired weak convergence property of $Z_n( \widehat{\tau}_{n,k}(x) )$ holds.
\end{proof}

Besides being the crucial tool in the asymptotic analysis of the nearest-neighbor local logistic log-likelihood, Lemma~\ref{lem:tightness} and Theorem~\ref{theo:tightness} make it possible to obtain laws of large numbers for certain weighted averages of nearest neighbors and nearest-neighbor weighted empirical Gram matrices. These laws of large numbers will be used several times subsequently.

\begin{proposition}[Laws of large numbers for local linear nearest-neighbor estimators]
\label{prop:mean_var_score_nn}
Suppose that \ref{cond:new1} is fulfilled. Let $\varphi:\mathbb{R}^p\to\mathbb{R}$ be measurable, and continuous at the point $x$. If $k:=k_n \to \infty$ is such that $k/n \to 0$, then 
\[
\frac{1}{k} \sum_{i \in N_k(x)} \varphi(X_i) \begin{pmatrix} 1 \\ \frac{X_i - x}{\tau_{n,k}(x)} \end{pmatrix} \begin{pmatrix} 1 \\ \frac{X_i - x}{\tau_{n,k}(x)} \end{pmatrix}^T \stackrel{\mathbb{P}}{\longrightarrow} \varphi(x) \begin{pmatrix} 1 & 0_p^T \\ 0_p & \frac{1}{p+2} I_p \end{pmatrix}.
\]
Moreover, for any symmetric matrix $M\in \mathbb{R}^{p\times p}$, 
\[
\frac{1}{k} \sum_{i \in N_k(x)} \left( \frac{X_i - x}{\tau_{n,k}(x)} \right)^T M \left( \frac{X_i - x}{\tau_{n,k}(x)} \right) \begin{pmatrix} 1 \\ \frac{X_i - x}{\tau_{n,k}(x)} \end{pmatrix} \stackrel{\mathbb{P}}{\longrightarrow} \frac{\operatorname{tr}(M)}{p+2} \begin{pmatrix} 1 \\ 0_p \end{pmatrix}.
\]
\end{proposition}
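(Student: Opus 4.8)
The plan is to isolate the centred part of the sum and control it via Theorem~\ref{theo:tightness}(i), and then to analyse the remaining deterministic ``bias'' term — a Lebesgue integral over a ball, evaluated at the random nearest-neighbour radius — by a change of variables together with Lemma~\ref{lem:portier}. Fix one of the two claimed limits, write $w_i:=(X_i-x)/\tau_{n,k}(x)$, and let $\Psi_n:\mathbb{R}^p\to\mathbb{R}^q$ be the corresponding covariate-only integrand (a matrix, resp.\ a vector, which I regard as $\mathbb{R}^q$-valued after vectorisation): $\Psi_n(X_i)=\varphi(X_i)(1,w_i^T)^T(1,w_i^T)$ in the first case and $\Psi_n(X_i)=(w_i^TMw_i)(1,w_i^T)^T$ in the second. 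I would place this in the framework of Theorem~\ref{theo:tightness} by letting $E$ be a singleton and ignoring the responses. Since \ref{cond:new1} rules out ties almost surely, $N_k(x)=\{i:X_i\in B(x,\widehat{\tau}_{n,k}(x))\}$ has exactly $k$ elements, so with $g_n(\tau):=\mathbb{E}[\Psi_n(X)\ind_{B(x,\tau)}(X)]$ one can write
\[
\frac{1}{k}\sum_{i\in N_k(x)}\Psi_n(X_i)=\frac{1}{\sqrt{k}}\,Z_n\big(\widehat{\tau}_{n,k}(x)\big)+\frac{n}{k}\,g_n\big(\widehat{\tau}_{n,k}(x)\big).
\]
Because $k/n\to0$ forces $\tau_{n,k}(x)\to0$ and $\varphi$ is continuous, hence locally bounded, at $x$, the integrand $\Psi_n$ satisfies the uniform boundedness hypothesis of Theorem~\ref{theo:tightness} on $B(x,(3/2)^{1/p}\tau_{n,k}(x))$ for $n$ large, so $Z_n(\widehat{\tau}_{n,k}(x))=O_{\mathbb{P}}(1)$ and the first term above is $o_{\mathbb{P}}(1)$ as $k\to\infty$.

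Next I would identify the limit of $\tfrac{n}{k}g_n(\widehat{\tau}_{n,k}(x))$. For $t\in[1/2,3/2]$, the substitution $z=x+\tau_{n,k}(x)v$ together with the identity $n\tau_{n,k}^p(x)/k=1/(f_X(x)V_p)$ rewrites $\tfrac{n}{k}g_n(t^{1/p}\tau_{n,k}(x))$ as
\[
\frac{1}{f_X(x)V_p}\int_{B(0,t^{1/p})}\Psi_n\big(x+\tau_{n,k}(x)v\big)\,f_X\big(x+\tau_{n,k}(x)v\big)\,\diff v,
\]
in which $\Psi_n(x+\tau_{n,k}(x)v)$ is simply $\varphi(x+\tau_{n,k}(x)v)(1,v^T)^T(1,v^T)$, respectively $(v^TMv)(1,v^T)^T$. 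On the compact ball $B(0,(3/2)^{1/p})$ the integrand is bounded uniformly in $n$ and, by continuity of $\varphi$ and $f_X$ at $x$, converges pointwise to the same expression with $\varphi(x+\tau_{n,k}(x)v)$, $f_X(x+\tau_{n,k}(x)v)$ replaced by $\varphi(x)$, $f_X(x)$; hence $t\mapsto\tfrac{n}{k}g_n(t^{1/p}\tau_{n,k}(x))$ converges uniformly on $[1/2,3/2]$ to a limit $G(t)$ that is continuous at $t=1$. Since $(\widehat{\tau}_{n,k}(x)/\tau_{n,k}(x))^p\stackrel{\mathbb{P}}{\longrightarrow}1$ by Lemma~\ref{lem:portier}, I would then work on the event (of probability tending to $1$) on which this ratio lies in $[1/2,3/2]$ and combine the uniform convergence with the continuous mapping theorem to obtain $\tfrac{n}{k}g_n(\widehat{\tau}_{n,k}(x))\stackrel{\mathbb{P}}{\longrightarrow}G(1)$.

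Finally I would compute $G(1)$ from elementary integrals over the unit ball: by oddness $\int_{B(0,1)}v\,\diff v=0$ and $\int_{B(0,1)}(v^TMv)v\,\diff v=0$; by rotational invariance and polar coordinates $\int_{B(0,1)}\|v\|_2^2\,\diff v=pV_p\int_0^1r^{p+1}\,\diff r=\tfrac{pV_p}{p+2}$, so $\int_{B(0,1)}vv^T\,\diff v=\tfrac{V_p}{p+2}I_p$ and $\int_{B(0,1)}v^TMv\,\diff v=\operatorname{tr}\big(M\int_{B(0,1)}vv^T\,\diff v\big)=\tfrac{V_p}{p+2}\operatorname{tr}(M)$; substituting these into $G(1)$ and simplifying the factors $1/(f_X(x)V_p)$, $f_X(x)$ and $\varphi(x)$ reproduces exactly the two right-hand sides in the statement.

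The step I expect to be the main obstacle is the bookkeeping around the random radius: $g_n$ is a deterministic function evaluated at the random point $\widehat{\tau}_{n,k}(x)$, so one cannot substitute directly and must first establish convergence uniformly over the band of radii $\{t^{1/p}\tau_{n,k}(x):t\in[1/2,3/2]\}$ before invoking Lemma~\ref{lem:portier}; verifying the uniform boundedness hypothesis of Theorem~\ref{theo:tightness} near $x$ needs similar care. The integral identities, and the passage from the $\varphi(X_i)$-weighting back to the value $\varphi(x)$, are routine.
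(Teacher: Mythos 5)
Your proposal is correct and follows essentially the same route as the paper: both split the nearest-neighbor average into a centered empirical-process term, shown to be $O_{\mathbb{P}}(1)$ after $\sqrt{k}$-scaling via the tightness result over the band $t\in[1/2,3/2]$ (the paper invokes Lemma~\ref{lem:tightness}(i) directly where you invoke Theorem~\ref{theo:tightness}(i), which is equivalent here), plus a deterministic expectation term handled by the change of variables, uniform convergence in $t$, Lemma~\ref{lem:portier} at $t_n=(\widehat{\tau}_{n,k}(x)/\tau_{n,k}(x))^p$, and the same ball integrals $\int_{B(0,1)}vv^T\,\diff v=\tfrac{V_p}{p+2}I_p$ and the vanishing odd moments. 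No gaps.
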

\begin{proof}[Proof of Proposition~\ref{prop:mean_var_score_nn}] Identify in this proof the vector space of square matrices $(p+1)\times (p+1)$ having real coefficients with $\mathbb{R}^{(p+1)^2}$ equipped with its standard Euclidean norm $\| \cdot \|_2$ and consider the stochastic process 
\begin{align*}
Z_n(\tau) = \frac{1}{\sqrt{k}} \sum_{i=1}^n \left\{\Psi_n (Y_i,X_i) \ind_{B(x,\tau )}(X_i) - \mathbb E [\Psi_n (Y,X) \ind_{B(x,\tau )} (X) ]  \right\}
\end{align*}
with $E=\{0\}$, $Y_i=0$ for all $i$ and
\[
\Psi_n(y,z) \equiv \Psi_n(z) = \varphi(z) \begin{pmatrix} 1 \\ \frac{z - x}{\tau_{n,k}(x)} \end{pmatrix} \begin{pmatrix} 1 \\ \frac{z - x}{\tau_{n,k}(x)} \end{pmatrix}^T.
\]
Let $|\varphi|_{\infty}$ be a finite upper bound for $|\varphi|$ in a sufficiently small neighborhood $U$ of $x$, and let $n_0$ be such that for $n\geq n_0$, $B(x,(3/2)^{1/p}\tau_{n,k}(x)) \subset U$. Then 
\[
\sup_{n\geq n_0} \sup_{z\in B(x,(3/2)^{1/p}\tau_{n,k}(x))} \| \Psi_n(z) \|_2 \leq |\varphi|_{\infty} 
\left( 1 + \left( \frac{3}{2} \right)^{2/p} \right).
\]
Conclude, by Lemma~\ref{lem:tightness}(i), that the stochastic process $\{ Z_n (t^{1/p}\tau_{n,k}(x)) \}_{ t \in [1/2,3/2] }$ is tight in $\ell ^\infty ([1/2,3/2])$. It follows that, first of all, 
\begin{align*}
    &\sup_{t \in [1/2,3/2]} \left\| \frac{1}{k} \sum_{i=1}^n \varphi(X_i) \begin{pmatrix} 1 \\ \frac{X_i - x}{\tau_{n,k}(x)} \end{pmatrix} \begin{pmatrix} 1 \\ \frac{X_i - x}{\tau_{n,k}(x)} \end{pmatrix}^T \ind_{B(x,t^{1/p}\tau_{n,k}(x) )}(X_i) \right. \\
    &\left. \qquad \qquad \qquad - \frac{n}{k} \mathbb E \left[ \varphi(X) \begin{pmatrix} 1 \\ \frac{X - x}{\tau_{n,k}(x)} \end{pmatrix} \begin{pmatrix} 1 \\ \frac{X - x}{\tau_{n,k}(x)} \end{pmatrix}^T \ind_{B(x,t^{1/p}\tau_{n,k}(x) )} (X) \right] \right\|_2 \stackrel{\mathbb{P}}{\longrightarrow} 0.
\end{align*}
Obviously $\int_{B(0,1)} \diff z = V_p$ and $\int_{B(0,1)} z \diff z = 0$; moreover $\int_{B(0,1)} z_i z_j \diff z = 0$ when $i\neq j$, so that by rotational symmetry and a change to polar coordinates, 
\[
\int_{B(0,1)} z z^T \diff z = \left( \int_{B(0,1)} z_1^2 \diff z \right) I_p = \frac{1}{p} \left( \int_{B(0,1)} \| z \|_2^2 \diff z \right) I_p = \frac{V_p}{p+2} I_p.
\] 
Using the continuity of $\varphi$ and $f_X$ at $x$ and a linear change of variables, one finds 
\begin{align*}
    &\sup_{t \in [1/2,3/2]} \left\| \frac{n}{k} \mathbb E \left[ \varphi(X) \begin{pmatrix} 1 \\ \frac{X - x}{\tau_{n,k}(x)} \end{pmatrix} \begin{pmatrix} 1 \\ \frac{X - x}{\tau_{n,k}(x)} \end{pmatrix}^T \ind_{A} (X) \right] - t \varphi(x) \begin{pmatrix} 1 & 0_p^T \\ 0_p & \frac{t^{2/p}}{p+2} I_p \end{pmatrix} \right\|_2 \\
    &\leq \frac{3}{2} \times \frac{n}{k} \tau_{n,k}^p(x) \sup_{t \in [1/2,3/2]} \left\| \int_{B(0,1)} ( (\varphi f_X) (x +  t^{1/p} \tau_{n,k}(x) z ) - (\varphi f_X) (x) ) \right.\\
&\qquad \qquad \qquad \qquad \qquad \qquad \qquad \qquad \qquad \qquad \quad      \left.
    \begin{pmatrix} 1 & t^{1/p} z^T \\ t^{1/p} z & t^{2/p} z z^T \end{pmatrix} \diff z \right\|_2   \to 0,
\end{align*}
where $ A $ in the first above line stands for the ball $ B (x,t^{1/p}\tau_{n,k}(x) )$. Taking $t=t_n = (\widehat{\tau}_{n,k}(x) / \tau_{n,k}(x))^p$, which converges to 1 in probability, then yields 
\begin{align*}
    &\frac{1}{k} \sum_{i \in N_k(x)} \varphi(X_i) \begin{pmatrix} 1 \\ \frac{X_i - x}{\tau_{n,k}(x)} \end{pmatrix} \begin{pmatrix} 1 \\ \frac{X_i - x}{\tau_{n,k}(x)} \end{pmatrix}^T \\
    &= \frac{1}{k} \sum_{i=1}^n \varphi(X_i) \begin{pmatrix} 1 \\ \frac{X_i - x}{\tau_{n,k}(x)} \end{pmatrix} \begin{pmatrix} 1 \\ \frac{X_i - x}{\tau_{n,k}(x)} \end{pmatrix}^T \ind_{B(x,\widehat{\tau}_{n,k}(x) )}(X_i) \\
    &= \varphi(x) \begin{pmatrix} 1 & 0_p^T \\ 0_p & \frac{1}{p+2} I_p \end{pmatrix} + o_{\mathbb{P}}(1).
\end{align*}
This proves the first convergence. To show the second one, note it is an immediate consequence of the first convergence that 
\[
\frac{1}{k} \sum_{i \in N_k(x)} \left( \frac{X_i - x}{\tau_{n,k}(x)} \right)^T M \left( \frac{X_i - x}{\tau_{n,k}(x)} \right) \stackrel{\mathbb{P}}{\longrightarrow} \frac{\operatorname{tr}(M)}{p+2}. 
\]
Finally, to show that 
\[
\frac{1}{k} \sum_{i \in N_k(x)} \left\{ \left( \frac{X_i - x}{\tau_{n,k}(x)} \right)^T M \left( \frac{X_i - x}{\tau_{n,k}(x)} \right) \right\} \frac{X_i - x}{\tau_{n,k}(x)} \stackrel{\mathbb{P}}{\longrightarrow} 0_p,  
\]
repeat the proof of the first statement with the function 
\[
\Psi_n(y,z) \equiv \Psi_n(z) = \left\{ \left( \frac{z - x}{\tau_{n,k}(x)} \right)^T M \left( \frac{z - x}{\tau_{n,k}(x)} \right) \right\} \frac{z - x}{\tau_{n,k}(x)}
\]
and note that for any $i,j,k\in \{1,\ldots,p\}$, $\int_{B(0,1)} z_i z_j z_k \diff z = 0$.
\end{proof}

\subsection{Convergence properties of the nearest-neighbor local logistic log-likelihood}
\label{sec:proofs:loglik}

It follows from~\eqref{eqn:penNNlogit} that proving Theorem~\ref{theo:main} is equivalent to obtaining the convergence of 
\begin{align*}
    &( \sqrt{k}(\widehat{a}_n(x)-\ell(x)), \tau_{n,k}(x)\sqrt{k}(\widehat{b}_n(x)-\nabla\ell(x)) ) \\
    & \qquad \qquad = \argmax_{(a,b)\in \mathbb{R}\times \mathbb{R}^p} \left\{ L_n\left( \ell(x) + \frac{a}{\sqrt{k}}, \nabla\ell(x) + \frac{b}{\tau_{n,k}(x)\sqrt{k}} \right) \right.\\
    &\qquad \qquad\qquad \qquad\qquad \qquad\qquad \qquad\left.
    - \lambda \left\| \nabla\ell(x) + \frac{b}{\tau_{n,k}(x)\sqrt{k}} \right\| _1 \right\}.
\end{align*}
The objective function is concave, so Lemma~\ref{lem:pollard} suggests that it is enough to consider its convergence properties in order to recover the convergence of its minimizer. The quantity $L_n(a,b)$ is a log-likelihood, so we analyze the convergence of the first term in the above objective function using a Taylor expansion of order 2. This requires obtaining the asymptotic behavior of the corresponding score function at $(\ell(x),\nabla\ell(x))$, that is
\[
S_n(x)=\frac{1}{\sqrt{k}} \sum_{i \in N_k(x) } ( Y_i - \expit(\ell (x) + \nabla \ell(x)^T (X_i-x)) ) \begin{pmatrix} 1 \\ \frac{X_i - x}{\tau_{n,k}(x)} \end{pmatrix},
\]
and of its Hessian matrix at $(\ell(x),\nabla\ell(x))$, namely 
\[
H_n(x)=-\frac{1}{k}\sum_{i\in N_k(x) } \expit'(\ell (x) + \nabla \ell(x)^T (X_i-x)) \begin{pmatrix} 1 \\ \frac{X_i - x}{\tau_{n,k}(x)} \end{pmatrix} \begin{pmatrix} 1 \\ \frac{X_i - x}{\tau_{n,k}(x)} \end{pmatrix}^T
\]
where $\expit':s\mapsto\expit(s)(1-\expit(s))$ denotes the derivative of $\expit$. Recall the notation 
\[
\Gamma(x) =  \pi(x)(1-\pi(x)) \begin{pmatrix} 1 & 0_p^T \\ 0_p & \frac{1}{p+2} I_p \end{pmatrix}.
\]
Write $\Delta \pi(x)$ for the Laplacian of $\pi$ at $x$, \textit{i.e.}~the trace of its Hessian matrix.
\begin{lemma}[Convergence of the score function and Hessian of the local logistic log-likelihood]
\label{lem:score_hessian}
Suppose that \ref{cond:new1} and \ref{cond:new2} are fulfilled. If $k:=k_n  \to \infty $ is such that $k/n \to 0$, it holds that $S_n(x)=W_n(x) + T_n(x)$, where
\begin{align*}
    W_n(x) &= \frac{1}{\sqrt{k}} \sum_{i \in N_k(x) } ( Y_i - \pi(X_i) ) \begin{pmatrix} 1 \\ \frac{X_i - x}{\tau_{n,k}(x)} \end{pmatrix} \stackrel{\mathrm{d}}{\longrightarrow} \mathcal N (0,\Gamma(x)) 
\end{align*}
and
\begin{align*} 
     T_n(x) &= \tau_{n,k}^2(x) \sqrt{k} \left( \frac{1}{2(p+2)} \left( \Delta \pi(x) - \gamma(x) \| \nabla \pi(x) \|_2^2 \right) \begin{pmatrix} 1 \\ 0_p \end{pmatrix} + o_{\mathbb{P}}(1) \right)
\end{align*}
\cbstart with $\gamma (x) = \frac{1-2\pi(x)}{\pi(x)(1-\pi(x))}$. Moreover, \cbend
$
H_n(x) \stackrel{\mathbb{P}}{\longrightarrow} - \Gamma(x).
$
\end{lemma}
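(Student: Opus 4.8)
The plan is to prove the two displayed assertions separately, reducing the score to a combination of Theorem~\ref{theo:tightness} and Proposition~\ref{prop:mean_var_score_nn}, and the Hessian to a one-line application of Proposition~\ref{prop:mean_var_score_nn}. For the score I would write $Y_i-\expit(\ell(x)+\nabla\ell(x)^T(X_i-x))$ as $(Y_i-\pi(X_i))+\bigl(\pi(X_i)-\expit(\ell(x)+\nabla\ell(x)^T(X_i-x))\bigr)$, which gives $S_n(x)=W_n(x)+T_n(x)$ exactly as in the statement. For $W_n(x)$ I would apply Theorem~\ref{theo:tightness}(ii) to the process $Z_n$ associated with $\Psi_n(y,z)=(y-\pi(z))(1,(z-x)^T/\tau_{n,k}(x))^T$: the conditional mean $\expec[\Psi_n(Y,X)\mid X]=0$ kills the centering term, so $Z_n(\widehat\tau_{n,k}(x))=W_n(x)$ (ties having probability $0$ under~\ref{cond:new1}); the uniform boundedness hypothesis holds because $|y-\pi(z)|\le 1$ and $\|(z-x)/\tau_{n,k}(x)\|_2\le (3/2)^{1/p}$ on $B(x,(3/2)^{1/p}\tau_{n,k}(x))$; and the variance condition holds with $\Sigma_n^2(X)=\pi(X)(1-\pi(X))(1,(X-x)^T/\tau_{n,k}(x))^T(1,(X-x)^T/\tau_{n,k}(x))$, whose rescaled integral converges — using $\tau_{n,k}(x)\to 0$, continuity of $\pi$ at $x$, and $\int_{B(0,1)}v\,\diff v=0$, $\int_{B(0,1)}vv^T\,\diff v=\frac{V_p}{p+2}I_p$ — to $V_p\Sigma^2(t,x)$ with $\Sigma^2(1,x)=\Gamma(x)$. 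This yields $W_n(x)\stackrel{\mathrm{d}}{\longrightarrow}\mathcal N(0,\Gamma(x))$.

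For the bias term $T_n(x)$, since $\pi(x)\in(0,1)$ and $\pi$ is $C^2$ at $x$ by~\ref{cond:new2}, the function $\ell=\logit\pi$ admits a second-order Taylor expansion at $x$ with Peano remainder; composing with $\expit$ (smooth, bounded derivatives) and using $\expit'(\ell(x))=\pi(x)(1-\pi(x))$ gives, uniformly for $z$ in a shrinking neighborhood of $x$,
\[
\pi(z)-\expit(\ell(x)+\nabla\ell(x)^T(z-x))=\tfrac12\,\pi(x)(1-\pi(x))\,(z-x)^T\nabla^2\ell(x)(z-x)+o(\|z-x\|_2^2).
\]
Restricting to $i\in N_k(x)$, where $\|X_i-x\|_2\le\widehat\tau_{n,k}(x)$ and $\widehat\tau_{n,k}(x)/\tau_{n,k}(x)\to 1$ by Lemma~\ref{lem:portier}, the leading quadratic term combined with the second identity of Proposition~\ref{prop:mean_var_score_nn} (taken with $M=\nabla^2\ell(x)$) produces the announced limit $\frac{\pi(x)(1-\pi(x))\operatorname{tr}(\nabla^2\ell(x))}{2(p+2)}(1,0_p^T)^T$, while the Peano remainder — uniformly $o(\tau_{n,k}^2(x))$ over the neighbor indices — contributes only $o_{\mathbb{P}}(\tau_{n,k}^2(x)\sqrt k)$ after multiplication by $1/\sqrt k$ and summation over the $k$ neighbors. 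It then remains to verify $\pi(x)(1-\pi(x))\Delta\ell(x)=\Delta\pi(x)-\frac{1-2\pi(x)}{\pi(x)(1-\pi(x))}\|\nabla\pi(x)\|_2^2$, which follows by differentiating $\nabla\ell=\nabla\pi/(\pi(1-\pi))$ once more and taking the trace.

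The Hessian is handled in one step: applying the first identity of Proposition~\ref{prop:mean_var_score_nn} to the fixed continuous function $\varphi(z)=\expit'(\ell(x)+\nabla\ell(x)^T(z-x))$, for which $\varphi(x)=\expit'(\ell(x))=\pi(x)(1-\pi(x))$, gives $H_n(x)\stackrel{\mathbb{P}}{\longrightarrow}-\Gamma(x)$ immediately.

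I expect the main obstacle to be the control of $T_n(x)$: one must carry out the two nested Taylor expansions (of $\ell$, then of $\expit$) carefully and, above all, argue that the composite remainder is $o(\tau_{n,k}^2(x))$ \emph{uniformly} over the random index set $N_k(x)$. This rests on the fact that every neighbor distance is at most $\widehat\tau_{n,k}(x)$, which stays within a constant factor of $\tau_{n,k}(x)$ with probability tending to $1$ (Lemma~\ref{lem:portier}), so that the remainder, once rescaled by $\sqrt k/k$, is negligible relative to $\tau_{n,k}^2(x)\sqrt k$. The supporting algebraic simplification of $\Delta\ell(x)$ is routine but necessary to match the constant stated in the lemma.
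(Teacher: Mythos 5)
Your proposal is correct and follows essentially the same route as the paper: the same decomposition $S_n=W_n+T_n$, the same application of Theorem~\ref{theo:tightness}(ii) with $\Psi_n(y,z)=(y-\pi(z))(1,(z-x)^T/\tau_{n,k}(x))^T$ for $W_n$, and the same use of Proposition~\ref{prop:mean_var_score_nn} for $H_n$ and for the quadratic term in $T_n$. The only (harmless) organizational difference is in the bias term: you Taylor-expand $\ell$ to second order and compose with $\expit$, obtaining a single quadratic form with $M=\nabla^2\ell(x)$ and then recovering the stated constant via the identity $\pi(1-\pi)\Delta\ell=\Delta\pi-\frac{1-2\pi}{\pi(1-\pi)}\|\nabla\pi\|_2^2$, whereas the paper expands $\expit(\ell(x)+\nabla\ell(x)^T(z-x))$ and $\pi(z)$ separately around $x$ and applies Proposition~\ref{prop:mean_var_score_nn} twice, with $M=H_\pi(x)$ and $M=\nabla\pi(x)\nabla\pi(x)^T$.
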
  
The control of $T_n(x)$ uses the following consequence of the Taylor formula with integral form of the remainder: let, for any function $F:\mathbb{R}^q\to \mathbb R$ that is twice continuously differentiable at a point $z$, $H_F(z')$ denote its Hessian matrix at the point $z'$ when $z'$ is close enough to $z$. Then 
\begin{align}
\lim_{\eta\to 0} \sup_{\substack{z'\in B(z,\eta) \\ z'\neq z}} \frac{1}{\| z'-z \|_2^2} &\left| F( z' ) - F( z ) - ( z'-z ) ^T \nabla F ( z ) \right.\nonumber \\
 &\qquad \quad \left.- \frac{1}{2} ( z'-z ) ^T H_F( z ) ( z'-z ) \right| = 0.\label{eqn:taylor}
\end{align}
More precisely, for $z'\neq z$ but close enough to $z$, 
\begin{align}
\nonumber 
    &\frac{1}{\| z'-z \|_2^2} \left| F( z' ) - F( z ) - ( z'-z ) ^T \nabla F ( z )  - \frac{1}{2} ( z'-z ) ^T H_F( z ) ( z'-z ) \right|\\
\label{eqn:taylor_bound}
    &\leq \frac{1}{2} \| H_F( z' ) -  H_F( z )\|_2
\end{align}
where in the upper bound $\| \cdot \|_2$ is the operator norm induced by the Euclidean norm on $\mathbb{R}^q$.
\begin{proof}[Proof of Lemma~\ref{lem:score_hessian}] 
Since $\expit'=\expit(1-\expit)$, the convergence in probability of $H_n(x)$ to $-\Gamma(x)$ is an obvious consequence of Proposition~\ref{prop:mean_var_score_nn}. We concentrate on the convergence of $S_n(x)$. Write
\begin{align*}
S_n(x)  &= \frac{1}{\sqrt{k}} \sum_{i \in N_k(x) } ( Y_i - \pi(X_i) ) \begin{pmatrix} 1 \\ \frac{X_i - x}{\tau_{n,k}(x)} \end{pmatrix} \\
    &- \frac{1}{\sqrt{k}} \sum_{i \in N_k(x) } ( \expit(\ell (x) + \nabla \ell(x)^T (X_i-x)) - \pi(X_i) ) \begin{pmatrix} 1 \\ \frac{X_i - x}{\tau_{n,k}(x)} \end{pmatrix} \\
    &=: W_n(x) + T_n(x).
\end{align*}
We study first the quantity $W_n(x)$ and we then examine $T_n(x)$. 
\vskip1ex
\noindent
\textbf{Convergence of $W_n(x)$:} Consider the stochastic process 
\begin{align*}
Z_n(\tau) = \frac{1}{\sqrt{k}} \sum_{i=1}^n \left\{\Psi_n (Y_i,X_i) \ind_{B(x,\tau )}(X_i) - \mathbb E [\Psi_n (Y,X) \ind_{B(x,\tau )} (X) ]  \right\}
\end{align*}
with $\Psi_n:E\times \mathbb R^p=\{0,1\}\times \mathbb R^p \to \mathbb R^{p+1}$ defined by
\[
\Psi_n(y,z)= ( y - \pi(z) ) \begin{pmatrix} 1 \\ \frac{z - x}{\tau_{n,k}(x)} \end{pmatrix}.  
\]
Then 
\[
\sup_{n\geq 1} \sup_{z\in B(x,(3/2)^{1/p}\tau_{n,k}(x))} \max(\| \Psi_n(0,z) \|_2, \| \Psi_n(1,z) \|_2 ) \leq \sqrt{1 + \left( \frac{3}{2} \right)^{2/p}} <\infty.
\]
Obviously $\mathbb E [\Psi_n (Y,X) \ind_{B(x,\tau )} (X) ]=0$ since $\mathbb{E}[Y-\pi(X) \, | \, X]=0$, and 
\begin{align*}
\Sigma_n^2(X) &:= 
\mathbb{E}\left[ \left. ( Y - \pi(X) )^2 \begin{pmatrix}
    1\\
    \frac{X - x}{\tau_{n,k}(x)}
\end{pmatrix} \begin{pmatrix}
    1\\
    \frac{X - x}{\tau_{n,k}(x)}
\end{pmatrix}^T \, \right| \, X \right] \\
    &=\pi(X) (1-\pi(X)) \begin{pmatrix}
    1\\
    \frac{X - x}{\tau_{n,k}(x)}
\end{pmatrix} \begin{pmatrix}
    1\\
    \frac{X - x}{\tau_{n,k}(x)}
\end{pmatrix}^T.
\end{align*}
Recall also from the proof of Proposition~\ref{prop:mean_var_score_nn} that
\[
\int_{B(0,1)} z z^T \diff z = \frac{V_p}{p+2} I_p.
\]
Assumption \ref{cond:new2} then yields, for any $t>0$, 
\[
\int_{B(0,1)} \Sigma_n^2( x + \tau_{n,k}(x) t^{1/p} v ) \diff v \to V_p \pi(x)(1-\pi(x)) \begin{pmatrix} 1 & 0_p^T \\ 0_p & \frac{t^{2/p}}{p+2} I_p \end{pmatrix}.
\]
For $t=1$ the right-hand side is exactly $V_p\Gamma(x)$. Conclude, using Theorem~\ref{theo:tightness}(ii), that $Z_n ( \widehat{\tau}_{n,k}(x) ) \stackrel{\mathrm{d}}{\longrightarrow} \mathcal{N}(0, \Gamma(x))$, that is, 
\begin{align*}
    &\frac{1}{\sqrt{k}} \sum_{i \in N_k(x) } ( Y_i - \pi(X_i) ) \begin{pmatrix}
    1\\
    \frac{X_i - x}{\tau_{n,k}(x)}
\end{pmatrix} \\
    &= \frac{1}{\sqrt{k}} \sum_{i=1}^n ( Y_i - \pi(X_i) ) \begin{pmatrix}
    1\\
    \frac{X_i - x}{\tau_{n,k}(x)} 
\end{pmatrix} \ind_{B(x,\widehat{\tau}_{n,k}(x) )}(X_i) \\
    &\stackrel{\mathrm{d}}{\longrightarrow} \mathcal{N}(0,\Gamma(x))
\end{align*}
as announced.
\vskip1ex
\noindent
\textbf{Convergence of $T_n(x)$:} Fix $\varepsilon>0$. Write, for any $z$ close enough to $x$, 
\begin{align*}
    &\expit(\ell (x) + \nabla \ell(x)^T (z-x)) - \pi(z) \\
    &= \expit(\ell (x) + \nabla \ell(x)^T (z-x)) - \pi(x) - \nabla \pi(x)^T(z-x) \\
    &- (\pi(z) - \pi(x) - \nabla \pi(x)^T(z-x)).
\end{align*}
Recall that $\pi\mapsto \logit(\pi)$ has derivative $1/(\pi(1-\pi))$ on $(0,1)$. As a consequence, $\expit(\ell (x)) (1-\expit(\ell (x))) \nabla \ell (x) = \nabla \pi(x)$. Moreover, $s\mapsto\expit(s)$ has second derivative $s\mapsto \expit(s)(1-\expit(s))(1-2\expit(s))$. Applying~\eqref{eqn:taylor}, first to the function $\expit$ and then to the function $\pi$, which is twice continuously differentiable at $x$ by \ref{cond:new2}, leads to the existence of $\eta>0$ such that for all $z\in B(x,\eta)$, 
\begin{align*}
    &\bigg| \expit(\ell (x) + \nabla \ell(x)^T (z-x)) - \pi(x) - \nabla \pi(x)^T(z-x) \\
     & 
     \qquad \qquad \qquad  \qquad \qquad  \qquad \qquad  \qquad - \frac{1}{2} \gamma(x) (\nabla \pi(x)^T(z-x))^2 \bigg| 
    \leq \frac{\varepsilon}{2} \|z-x\|_2^2
\end{align*}
and 
\[
\left| \pi(z) - \pi(x) - \nabla \pi(x)^T(z-x) - \frac{1}{2} (z-x)^T H_{\pi}(x) (z-x) \right| \leq \frac{\varepsilon}{2} \|z-x\|_2^2.
\]
Since by Lemma~\ref{lem:portier} we have $\widehat{\tau}_{n,k}(x)/\tau_{n,k}(x)\to 1$ in probability and $\tau_{n,k}(x)\to 0$, one may conclude that 
\begin{align*}
&T_n(x) =  o_{\mathbb{P}}(\tau_{n,k}^2(x) \sqrt{k})+ \\
& \frac{1}{2\sqrt{k}} \sum_{i \in N_k(x) } \left( (X_i-x)^T H_{\pi}(x) (X_i-x) - \gamma (x) (\nabla \pi(x)^T(X_i-x))^2 \right) \begin{pmatrix} 1 \\ \frac{X_i - x}{\tau_{n,k}(x)} \end{pmatrix} .
\end{align*}
Now 
\begin{align*}
    &\frac{1}{\sqrt{k}} \sum_{i \in N_k(x) } \left( (X_i-x)^T H_{\pi}(x) (X_i-x) - \gamma(x) (\nabla \pi(x)^T(X_i-x))^2 \right) \begin{pmatrix} 1 \\ \frac{X_i - x}{\tau_{n,k}(x)} \end{pmatrix} \\
    &=\tau_{n,k}^2(x) \sqrt{k} \left( \frac{1}{k} \sum_{i \in N_k(x) } \left( \frac{X_i - x}{\tau_{n,k}(x)} \right)^T H_{\pi}(x) \left( \frac{X_i - x}{\tau_{n,k}(x)} \right) \begin{pmatrix} 1 \\ \frac{X_i - x}{\tau_{n,k}(x)} \end{pmatrix} \right. \\
    &\left.  \qquad- \frac{1}{k} \sum_{i \in N_k(x) } \gamma(x)  
    \left( \frac{X_i - x}{\tau_{n,k}(x)} \right)^T \nabla \pi(x) \nabla \pi(x)^T \left( \frac{X_i - x}{\tau_{n,k}(x)} \right) \begin{pmatrix} 1 \\ \frac{X_i - x}{\tau_{n,k}(x)} \end{pmatrix} \right) 
\end{align*}
and the second convergence of Proposition~\ref{prop:mean_var_score_nn} applies. The proof is complete.
\end{proof}

\subsection{Proof of Theorem~\ref{theo:main}}
\label{sec:proofs:theo}


{Note that $k/n\to 0$ because $\tau_{n,k}^2(x) \sqrt{k}$ is bounded.} Let the sequence of rescaling matrices $D_n$ be defined as
\[
D_n = \frac{1}{\sqrt{k}} \begin{pmatrix}
    1 & 0_p^T\\ 
    0_p & \frac{1}{\tau_{n,k}(x)} I_p 
\end{pmatrix}
\]
so that 
\[
\forall (a,b)\in \mathbb{R}\times \mathbb{R}^p, \ \frac{1}{\sqrt{k}}\begin{pmatrix}
    a\\
    \frac{b}{\tau_{n,k}(x)} 
\end{pmatrix} = D_n\begin{pmatrix}
    a\\
    b
\end{pmatrix}.
\]
We seek to apply Lemma~\ref{lem:pollard}(ii) with $A_n(u)=-A_{n,1}(u)+A_{n,2}(u)$, where, for given $u=(u_0,u_1,\ldots,u_p)\in \mathbb{R}^{p+1}$, we let
\begin{align*}
A_{n,1}(u) &= L_n \left( \begin{pmatrix} \ell(x) \\ \nabla\ell(x) \end{pmatrix} + D_n u\right)  -  L_n \begin{pmatrix} \ell(x) \\ \nabla\ell(x) \end{pmatrix}, \\ 
A_{n,2}(u) &= \lambda \left\{ \left\| \nabla\ell(x) + \frac{1}{\tau_{n,k}(x) \sqrt{k}} u_{(1:p)} \right\|_1 - \left\|    \nabla\ell(x)  \right\|_1\right\}, 
\end{align*}
with  $u_{(1:p)} = 
    (u_1, \ldots , u_p)^T$ %
and, if $S_n=S_n(x)$ denotes the score function of Lemma~\ref{lem:score_hessian}, 
\begin{align*}
B_n (u) &= - u^T S_n(x) + \frac 1 2 u ^T \Gamma(x) u \\
    &+ ( c f_X(x) V_p )^{1/p}\left(  \sum_{j=1}^p \mathrm {sgn}(\nabla \ell_j (x)) u_j \ind_{\{ \nabla \ell_j (x) \neq 0 \}} + |u_j| \ind_{\{ \nabla \ell_j (x) = 0 \}} \right).
\end{align*}
Since $\Gamma(x)$ is a positive definite matrix and $S_n(x)$ is bounded in probability by Lemma~\ref{lem:score_hessian}, the function $B_n$ satisfies the assumptions of Lemma~\ref{lem:pollard}. Noting that $L_n$ is concave, it follows that $A_n$ is convex, so that it is enough to show that $A_n-B_n$ converges pointwise to 0 in probability in order to apply Lemma~\ref{lem:pollard}(ii). Given that $H_n(x)$ converges to $-\Gamma(x)$ in probability by Lemma~\ref{lem:score_hessian} again, it is sufficient to prove that
\begin{align*}
R_n(u) &= A_{n,1}(u) - u^T S_n(x) - \frac 1 2 u^T H_n(x) u \stackrel{\mathbb{P}}{\longrightarrow} 0 
\end{align*}
and
\begin{align*}
    A_{n,2}(u) \stackrel{\mathbb{P}}{\longrightarrow} ( c f_X(x) V_p )^{1/p} \left\{ \sum_{j=1}^p \mathrm {sgn}(\nabla \ell_j (x)) u_j \ind_{\{ \nabla \ell_j (x) \neq 0 \}} + |u_j| \ind_{\{ \nabla \ell_j (x) = 0 \}} \right\}.
\end{align*}
The remainder term $R_n(u)$ is dealt with using the following lemma.

\begin{lemma}[Pointwise approximation of the nearest-neighbor local logistic log-likelihood]
\label{lem:Rn}
Suppose that \ref{cond:new1} and \ref{cond:new2} are fulfilled. If $k:=k_n  \to \infty $ is such that $k/n \to 0$, we have 
\[
\forall u\in \mathbb{R}^{p+1}, \ |R_n(u)| \leq \frac{\| u \|_2^3}{2\sqrt{2}} \times \frac{1}{\sqrt{k}} \left( \frac{ \widehat{\tau}_{n,k}(x) }{ \tau_{n,k}(x) } \vee 1 \right)^3,
\]
and in particular $|R_n(u)| = O_{\mathbb{P}}(1/\sqrt{k})$.
\end{lemma}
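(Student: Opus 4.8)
The plan is to recognise $R_n(u)$ as the second-order Taylor remainder of the rescaled log-likelihood at the true parameter and to control it through a uniform bound on the third derivative. First I would set $\theta_0 = (\ell(x), \nabla\ell(x))^T \in \mathbb{R}^{p+1}$ and $g(u) = L_n(\theta_0 + D_n u)$, so that $A_{n,1}(u) = g(u) - g(0)$. Since by~\eqref{eqn:penNNlogit_GLMform} the map $\theta\mapsto L_n(\theta)$ is a finite sum of functions $\theta\mapsto Y_i\,\theta^T z_i - \phi(\theta^T z_i)$ with $z_i = (1, (X_i-x)^T)^T$ and $\phi(s) = \log(1+e^s)$ (so that $\phi'=\expit$, $\phi''=\expit'$ and $\phi'''(s) = \expit(s)(1-\expit(s))(1-2\expit(s))$), the function $g$ is $C^{\infty}$ on $\mathbb{R}^{p+1}$, with $\nabla g(u) = D_n^T\nabla L_n(\theta_0+D_nu)$ and $\nabla^2 g(u) = D_n^T\nabla^2 L_n(\theta_0+D_nu)D_n$. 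Using $D_n^T z_i = k^{-1/2}\widetilde z_i$ with $\widetilde z_i = (1, (X_i-x)/\tau_{n,k}(x))^T$, I would check that $\nabla g(0) = S_n(x)$ and $\nabla^2 g(0) = H_n(x)$, so that $R_n(u)$ is precisely the remainder in the second-order Taylor expansion of $g$ about $0$, namely $R_n(u) = g(u) - g(0) - u^T\nabla g(0) - \tfrac12 u^T\nabla^2 g(0)u$.

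Next I would apply Taylor's formula with Lagrange remainder to the one-variable function $t\mapsto g(tu)$ on $[0,1]$, which produces some $\xi=\xi(u)\in(0,1)$ with $R_n(u) = \tfrac16 D^3 g(\xi u)[u,u,u]$, and compute directly that, for every $v\in\mathbb{R}^{p+1}$,
\[
D^3 g(v)[u,u,u] = -\frac{1}{k^{3/2}} \sum_{i\in N_k(x)} \phi'''\!\big(\langle\theta_0 + D_n v,\, z_i\rangle\big)\,(\widetilde z_i^T u)^3 .
\]
The remaining work is to bound this cubic form uniformly in $v$, for which I would use that $|\phi'''(s)| \le \expit(s)(1-\expit(s)) \le \tfrac14$, that $|N_k(x)| = k$, and that for each $i\in N_k(x)$ one has $\|X_i-x\|_2 \le \widehat{\tau}_{n,k}(x)$ — this being exactly how the $k$-NN radius $\widehat{\tau}_{n,k}(x)$ is defined, ties having probability $0$ under~\ref{cond:new1} — whence $\|\widetilde z_i\|_2^2 = 1 + \|X_i-x\|_2^2/\tau_{n,k}^2(x) \le 1 + (\widehat{\tau}_{n,k}(x)/\tau_{n,k}(x))^2 \le 2(\widehat{\tau}_{n,k}(x)/\tau_{n,k}(x)\vee1)^2$ and hence $|\widetilde z_i^T u|^3 \le \|\widetilde z_i\|_2^3\,\|u\|_2^3 \le 2\sqrt2\,(\widehat{\tau}_{n,k}(x)/\tau_{n,k}(x)\vee1)^3\,\|u\|_2^3$. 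Summing the $k$ terms would then give
\[
\sup_{v\in\mathbb{R}^{p+1}} \big|D^3 g(v)[u,u,u]\big| \le \frac{1}{k^{3/2}}\cdot k\cdot\frac14\cdot 2\sqrt2\left(\frac{\widehat{\tau}_{n,k}(x)}{\tau_{n,k}(x)}\vee1\right)^{3}\|u\|_2^3 = \frac{1}{\sqrt2\,\sqrt k}\left(\frac{\widehat{\tau}_{n,k}(x)}{\tau_{n,k}(x)}\vee1\right)^{3}\|u\|_2^3 ,
\]
and substituting into $R_n(u) = \tfrac16 D^3 g(\xi u)[u,u,u]$ would yield the stated inequality (indeed with the slightly sharper constant $\tfrac1{6\sqrt2}\le\tfrac1{2\sqrt2}$). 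For the last claim, Lemma~\ref{lem:portier} gives $\widehat{\tau}_{n,k}(x)/\tau_{n,k}(x)\to1$ in probability, so $(\widehat{\tau}_{n,k}(x)/\tau_{n,k}(x)\vee1)^3 = O_{\mathbb{P}}(1)$ and, $\|u\|_2$ being fixed, $|R_n(u)| = O_{\mathbb{P}}(1/\sqrt k)$.

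I do not expect a genuine obstacle here; the two points that need care are the bookkeeping identifying $\nabla g(0)$ and $\nabla^2 g(0)$ with $S_n(x)$ and $H_n(x)$ (tracking the rescaling by $D_n$ and the passage from $z_i$ to $\widetilde z_i$), and the uniform-in-$v$ control of the cubic form, whose only non-routine input is the geometric bound $\|X_i-x\|_2 \le \widehat{\tau}_{n,k}(x)$ valid for all $i\in N_k(x)$.
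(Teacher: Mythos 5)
Your proof is correct and follows essentially the same route as the paper's: both identify $R_n(u)$ as the second-order Taylor remainder of $u\mapsto L_n(\theta_0+D_nu)$ and control it via the bound $|\expit''|\le 1/4$ together with the geometric fact that $\|X_i-x\|_2\le\widehat{\tau}_{n,k}(x)$ for $i\in N_k(x)$, giving $\|\widetilde z_i\|_2^3\le 2\sqrt2\,(\widehat{\tau}_{n,k}(x)/\tau_{n,k}(x)\vee1)^3$. The only (immaterial) difference is that the paper bounds the Lipschitz constant of the Hessian and invokes the integral-form remainder, whereas you compute the third directional derivative and use the Lagrange form, which yields the slightly sharper constant $1/(6\sqrt2)$ in place of $1/(2\sqrt2)$.
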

\begin{proof}[Proof of Lemma~\ref{lem:Rn}] From~\eqref{eqn:taylor_bound}, we find that if $F:\mathbb{R}^{p+1}\to \mathbb R$ is a twice continuously differentiable function having a Lipschitz continuous Hessian matrix $u\mapsto H_F(u)$, that is, there is $C>0$ with $\|H_F( v ) -  H_F( u )\|_2 \leq C\| v-u \|_2$ for any $u,v\in \mathbb{R}^{p+1}$, then 
\begin{equation}
\label{eqn:taylor_bound_lipschitz}
\left| F(v) - F(u) - (v-u) ^T \nabla F (u)  - \frac{1}{2} (v-u) ^T H_F( u ) (v-u) \right| \leq \frac{C}{2} \| v-u \|_2^3.
\end{equation}
We apply~\eqref{eqn:taylor_bound_lipschitz} with $F:u\mapsto L_n(z+D_n u)$ where $z = ( \ell(x), \nabla \ell(x)^T )^T$. We know that the gradient of $F$ at $0=0_{p+1}$ is $S_n(x)$ and its Hessian matrix at $0$ is $H_n(x)$; more generally 
\[
H_F(u)=-\frac{1}{k}\sum_{i\in N_k(x) } \expit'\left( (z + D_n u)^T \begin{pmatrix} 1 \\ X_i - x \end{pmatrix} \right) \begin{pmatrix} 1 \\ \frac{X_i - x}{\tau_{n,k}(x)} \end{pmatrix} \begin{pmatrix} 1 \\ \frac{X_i - x}{\tau_{n,k}(x)} \end{pmatrix}^T.
\]
Now $s\mapsto \expit''(s)=\expit(s)(1-\expit(s))(1-2\expit(s))$ is bounded by $1/4$, so by the mean value theorem,
\begin{align*}
    &\left| \expit'\left( (z + D_n u)^T \begin{pmatrix} 1 \\ X_i - x \end{pmatrix} \right) - \expit'\left( z^T \begin{pmatrix} 1 \\ X_i - x \end{pmatrix} \right) \right| \\
    &\leq \frac{1}{\sqrt{k}} \times \frac{1}{4} \| u \|_2 \left\| \begin{pmatrix} 1 \\ \frac{X_i - x}{\tau_{n,k}(x)} \end{pmatrix} \right\|_2. 
\end{align*}
Therefore 
\[
\| H_F(u) - H_F(0) \|_2 \leq \frac{1}{\sqrt{k}} \times \frac{1}{4} \| u \|_2 \times \frac{1}{k} \sum_{i\in N_k(x)} \left\| \begin{pmatrix} 1 \\ \frac{X_i - x}{\tau_{n,k}(x)} \end{pmatrix} \right\|_2^3.
\]
Inequality~\eqref{eqn:taylor_bound_lipschitz} applies and yields, for any $u\in \mathbb{R}^{p+1}$,
\begin{align*}
    | R_n(u) | &\leq \frac{1}{\sqrt{k}} \times \frac{1}{8} \| u \|_2^3 \times \frac{1}{k} \sum_{i\in N_k(x)} \left\| \begin{pmatrix} 1 \\ \frac{X_i - x}{\tau_{n,k}(x)} \end{pmatrix} \right\|_2^3 \\
    &\leq \frac{\| u \|_2^3}{2\sqrt{2}} \times \frac{1}{\sqrt{k}} \left( \frac{ \widehat{\tau}_{n,k}(x) }{ \tau_{n,k}(x) } \vee 1 \right)^3
\end{align*}
as required. The conclusion on the rate of convergence of $R_n(u)$ to 0 follows from Lemma~\ref{lem:portier}.
\end{proof}

To show the convergence of $A_{n,2}(u)$, note that for each $z\neq 0$ and $u\in \mathbb R$, there is $t>0$ small enough such that  $| z + tu| - | z|  = tu \, \mathrm{sgn}(z)   $; besides, if $z = 0$, then $| z+ tu| - | z| = t|u| $. Working componentwise then immediately yields the below lemma, from which the pointwise limit of $A_{n,2}$ follows. 
\begin{lemma}
\label{lem:lasso_pen}
For any $u,v\in \mathbb R^p$, there exists $t_0>0$ such that for all $t\in [0, t_0]$,
\[
\| v +  t u \|_1  -  \| v \|_1 = t \sum_{j=1}^p \mathrm {sgn}(v_j) u_j \ind_{\{ v_j \neq 0 \}} + |u_j| \ind_{\{ v_j = 0 \}}. 
\]
\end{lemma}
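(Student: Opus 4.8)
The plan is to argue coordinatewise, using the elementary fact that for each $j$ the map $t\mapsto |v_j+tu_j|$ is piecewise affine in $t$ and therefore, on a small enough interval to the right of $0$, coincides \emph{exactly} (not just to first order) with its right-sided linear approximation. First I would fix $j\in\{1,\ldots,p\}$ and split into two cases. If $v_j\neq 0$, then $v_j+tu_j$ retains the sign of $v_j$ for $t$ small: this holds for every $t\geq 0$ when $u_j=0$, and for all $t\in[0,|v_j|/|u_j|)$ when $u_j\neq 0$. On that range $|v_j+tu_j|=\mathrm{sgn}(v_j)(v_j+tu_j)=|v_j|+t\,\mathrm{sgn}(v_j)u_j$, hence $|v_j+tu_j|-|v_j|=t\,\mathrm{sgn}(v_j)u_j$. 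If $v_j=0$, then since $t\geq 0$ we simply have $|v_j+tu_j|-|v_j|=|tu_j|=t|u_j|$ for every $t\geq 0$.

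Next I would define
\[
t_0=\min\bigl\{\,|v_j|/|u_j| \ : \ 1\leq j\leq p,\ v_j\neq 0,\ u_j\neq 0\,\bigr\},
\]
with the convention $t_0=1$ (or any fixed positive number) if this index set is empty; in all cases $t_0>0$. For every $t\in[0,t_0]$ the two identities above hold simultaneously for all $j$, so summing over $j$ and using $\|w\|_1=\sum_{j=1}^p|w_j|$ gives
\[
\|v+tu\|_1-\|v\|_1=\sum_{j=1}^p\bigl(|v_j+tu_j|-|v_j|\bigr)=t\sum_{j=1}^p\Bigl(\mathrm{sgn}(v_j)u_j\,\ind_{\{v_j\neq 0\}}+|u_j|\,\ind_{\{v_j=0\}}\Bigr),
\]
which is exactly the assertion of the lemma.

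There is no genuine obstacle here; the only point deserving emphasis is that the stated equality is valid on a whole (deterministic) interval $[0,t_0]$, not merely as a directional derivative. This is precisely what makes it usable for $A_{n,2}(u)$: taking $v=\nabla\ell(x)$, $u\leftarrow u_{(1:p)}$ and $t=1/(\tau_{n,k}(x)\sqrt{k})\to 0$, the scalar $t$ eventually drops below the fixed threshold $t_0=t_0(\nabla\ell(x),u)$, so that $A_{n,2}(u)=\lambda t\sum_{j=1}^p(\mathrm{sgn}(\nabla\ell_j(x))u_j\ind_{\{\nabla\ell_j(x)\neq 0\}}+|u_j|\ind_{\{\nabla\ell_j(x)=0\}})$ for $n$ large, and the claimed pointwise limit of $A_{n,2}(u)$ follows from $\lambda/(\tau_{n,k}(x)\sqrt{k})\to(cf_X(x)V_p)^{1/p}$.
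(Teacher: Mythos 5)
Your proof is correct and follows essentially the same route as the paper's: a componentwise case split on $v_j\neq 0$ versus $v_j=0$, exact (piecewise-linear) identities on a small interval to the right of $0$, and summation over $j$. The explicit choice of $t_0$ and the remark on how the lemma feeds into the limit of $A_{n,2}(u)$ are just more detailed renderings of what the paper states immediately before the lemma.
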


\subsection{Proof of Corollary \ref{corollary_matrix}}

 The proof follows from the decomposition
\begin{align*}
\|\widehat{M} - M \|_F & \leq  \left\| \frac{1}{m} \sum_{i=1} ^m \{ \widehat{b}_n (X_i^*)  \widehat{b}_n (X_i^*)^T  - \nabla \ell(X_i^*) \nabla \ell(X_i^*) ^T \}\right\|_F   \\
&+ \left\| \frac{1}{m} \sum_{i= 1} ^m  \nabla \ell(X_i^*) \nabla \ell(X_i^*) ^T - \int_ {\mathbb{R}^p} \nabla \ell(x) \nabla \ell(x) ^T \diff \mu(x) \right\|_F,   
\end{align*}
where $\|\cdot\|_F$ is the Frobenius norm. The first term in the above upper bound can be treated using Theorem~\ref{theo:main}:
\begin{align*}
    &\left\| \frac 1 m  \sum_{i= 1} ^m \{ \widehat{b}_n (X_i^*)  \widehat{b}_n (X_i^*)^T  - \nabla \ell(X_i^*) \nabla \ell(X_i^*) ^T \}\right\|_F \\
    &\leq \max_{x\in \mathrm{supp} (\mu) }\|  \widehat{b}_n (x)  \widehat{b}_n (x)^T  - \nabla \ell(x) \nabla \ell(x) ^T\|_F \\
    &= O_{\mathbb{P}} \left( \frac{1}{\tau_{n,k}(x) \sqrt{k}} \right)  +  O_{\mathbb{P}} (  \tau_{n,k}(x) ).
\end{align*}
The other term is controlled by computing its second moment:
\begin{align*}
    &\mathbb E \left( \left\| \frac 1 m \sum_{i= 1} ^m  \nabla \ell(X_i^*) \nabla \ell(X_i^*) ^T - \int_ {\mathbb{R}^p} \nabla \ell(x) \nabla \ell(x) ^T \diff \mu(x) \right\|_F^2 \right) \\
    &= \frac{1}{m} \mathbb E \left( \left\| \nabla \ell(X^*) \nabla \ell(X^*)^T - \int_ {\mathbb{R}^p} \nabla \ell(x) \nabla \ell(x) ^T \diff \mu(x) \right\|_F^2 \right) \\
    &\leq \frac{1}{m} \mathbb E \left( \left\| \nabla \ell(X^*) \nabla \ell(X^*)^T \right\|_F^2 \right) \\ 
    &= \frac 1 m \int_ {\mathbb{R}^p}  \left\|\nabla \ell(x) \right\|_2 ^4 \diff \mu(x) \leq \frac 1 m \max_{x\in \mathrm{supp} (\mu)}  \left\|\nabla \ell(x) \right\|_2 ^4.
\end{align*} 
The upper bound is finite by assumption. \qed

\section{Additional numerical results}
\label{sec:numerical_exp_suppl}

\subsection{Simulation study}

\cbstart 
We also conducted a simulation study to (1) evaluate the computing time of LLO$(\lambda=0)$ and LLO$(\lambda>0)$ in comparison with the competing local linear methods, (2) further investigate the impact of the number of nearest neighbors $k$ in both LLO$(\lambda=0)$ and LLO$(\lambda>0)$, as well as the influence of the bandwidth parameter $h$ for the competing approaches, namely LGSIM and WSVM, {and (3) ascertain the influence of finding the correct dimension of the central subspace.}

Figure~\ref{fig_compute_time} presents the average computation time for Examples 2 and 3 for gradient estimation with $p=8$ and increasing sample sizes ($n = 500$, $1000$, $2000$, $3000$, $4000$) across $1000$ simulation replicates. The computational cost increases with the sample size for all competing methods. Among the methods, the penalized LLO method consistently requires the least computation time, followed by the non-penalized version. In contrast, LGSIM exhibits a similar computation time when $n=500$ and the gap in computing time becomes more pronounced as $n$ increases, indicating that the proposed penalized $(\lambda > 0)$ and non-penalized $(\lambda = 0)$ approaches scale more efficiently for larger datasets. 
{As a conclusion, the proposed methods remain} computationally feasible even in moderate-to-large-sample settings, while providing a clear efficiency advantage over the LGSIM.



We next report results related to the tuning of the number of neighbors $k$ for LLO$(\lambda=0)$ and the bandwidth $h=c \cdot h_{Scott}$, where $h_{Scott}$ denotes Scott’s rule, for LGSIM by applying 5-fold cross-validation at a sample of randomly chosen local points $x_i$ of size $m=n/2$. We have also chosen $h$ for WSVM via cross-validation. The model is evaluated using the misclassification error, i.e., the empirical counterpart of the misclassification risk $\mathcal{R}(g)= \mathbb{P}(g(X)\neq Y)$. For tuning, we consider a grid with {$k \in \{1, 5, 10, \dots, 500\}$}, 
$c \in \{0.01, 0.13, 0.26, \ldots, 6\}$ and $h \in \{2.00,  2.27,  2.54, \ldots, 15\}$. The optimal parameter ($k$ or $c$ or $h$) is selected based on cross-validation performance. 
{We report the results for a single sample in Examples 2 and 3 in Figure~\ref{fig_lam} and Figure~\ref{fig_LGWS} to illustrate how this CV works. The results show that $k$, $c$ and $h$ play a significant role as they help to reduce the misclassification risk. For the penalized method LLO$(\lambda > 0$), we also investigated cross-validation-based choices of $k$ for Examples 2 and 3 (see the bottom row of Figure~\ref{fig_lam}). 
The selected values are 
closer} to the default $ (k = \lfloor \sqrt{n} \rfloor=33$ when $n=1000$), and the misclassification error remains stable over a range of $k$, indicating low sensitivity to this parameter. \cbend

Finally, similarly to Figure \ref{fig:distance_Ex4}, we represent in Figures~\ref{fig:distances_pvarying} and~\ref{fig:mcrisks} the distance to the central subspace and the misclassification risk in Examples 1, 2 and 3 when the correct dimension reduction subspace and sample size are fixed but the dimensions of the ambient space and estimated central subspace vary, \textit{i.e.} we fix $n=1000$, simulate a vector $X$ of independent centered and unit Gaussian random variables covariates having dimension $p\in \{8,16,32,64\}$, and the estimated central subspace has dimension $1\leq d\leq 6$. Again, the proposed LLO($\lambda>0$) method consistently yields superior results across all evaluated settings. \cbstart As $p$ increases, the performances of LLO($\lambda=0$), LGSIM and LLO($\lambda>0$) get closer, while the performances of the SAVE, POTD and WSVM methods tend to substantially deteriorate. \cbend Similarly to Figure~\ref{fig:analysis_Ex4}, we also provide in Figures~\ref{fig:dimension_select} and~\ref{fig:dim_comparison} results related to the dimension $d$ selected through the full workflow summarized in Algorithms~\ref{alg:Mestimate} and~\ref{alg:feature} and the misclassification risk of the hence obtained nearest-neighbor classifier in Examples 1, 2 and 3. 

\subsection{Real data analyses}

Figures~\ref{fig:Dim_selection_rf}--\ref{fig:comparison_data_roc_WDBC} contain extra results about the number of selected components by the proposed method with the random forest classifier and further elements about the predictive quality of the nearest neighbor and random forest classifiers when paired with one of the dimension reduction methods we consider.
\begin{figure}[htbp]
\includegraphics[width=0.45\linewidth, height=0.4\linewidth]{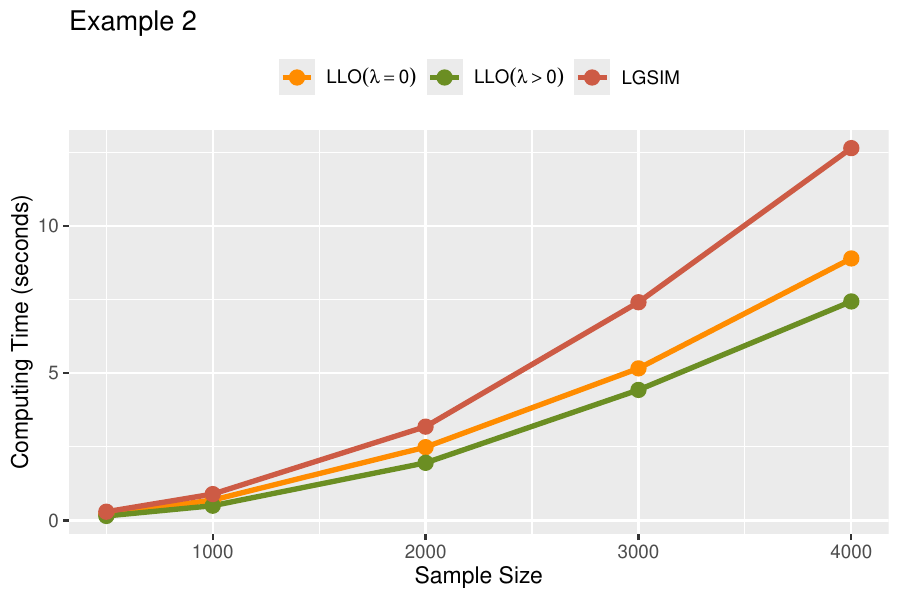}  
     \includegraphics[width=0.45\linewidth, height=0.4\linewidth]{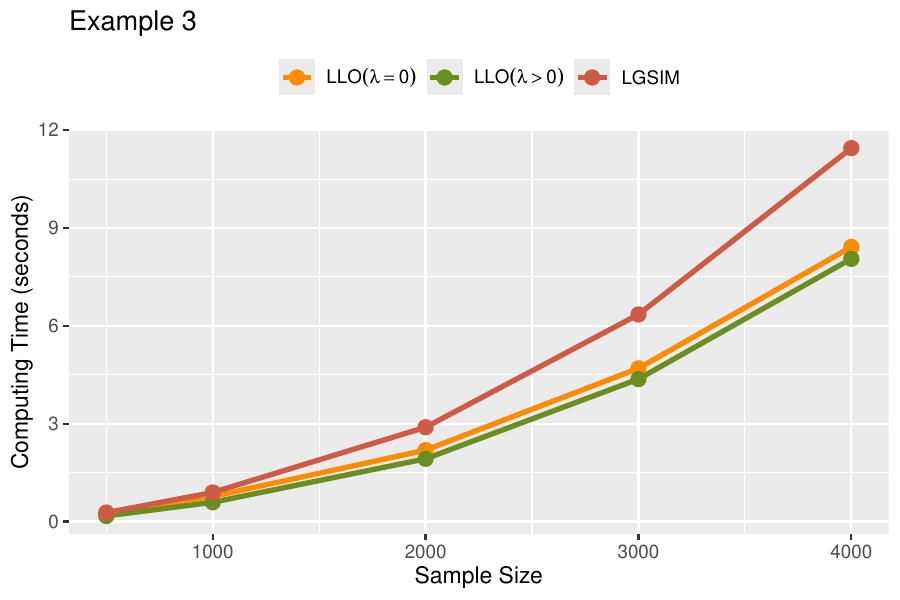}  

    \caption{ Average computing time for gradient estimation via LLO$(\lambda=0)$, LLO$(\lambda>0)$ and LGSIM for 1000 replicates when $p=8$ and sample size $n\in \{ 500, 1000, 2000, 3000, 4000 \}$.}\label{fig_compute_time}
\end{figure}
\begin{figure}[htbp]
\includegraphics[width=0.49\linewidth]{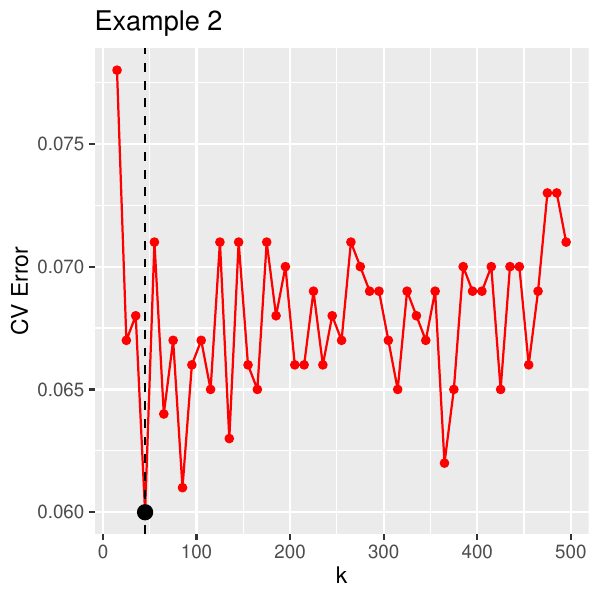}  
     \includegraphics[width=0.49\linewidth]{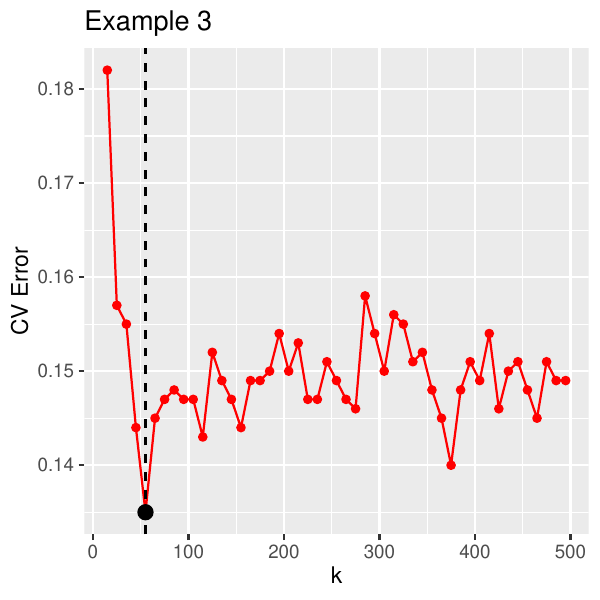}  
\includegraphics[width=0.49\linewidth]{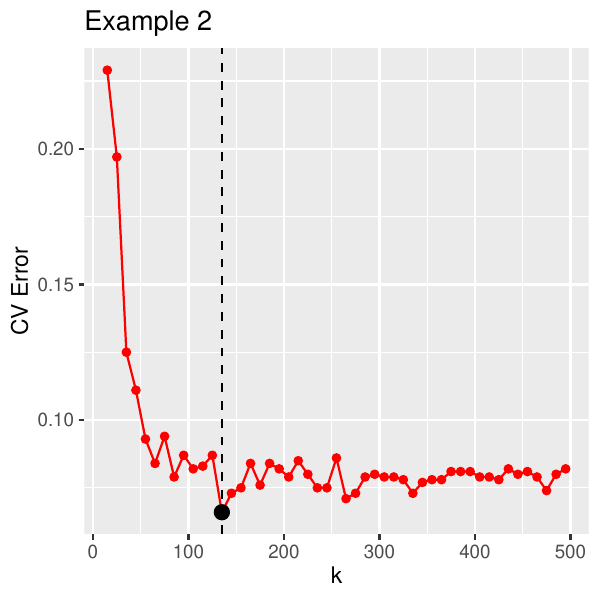}
\includegraphics[width=0.49\linewidth]{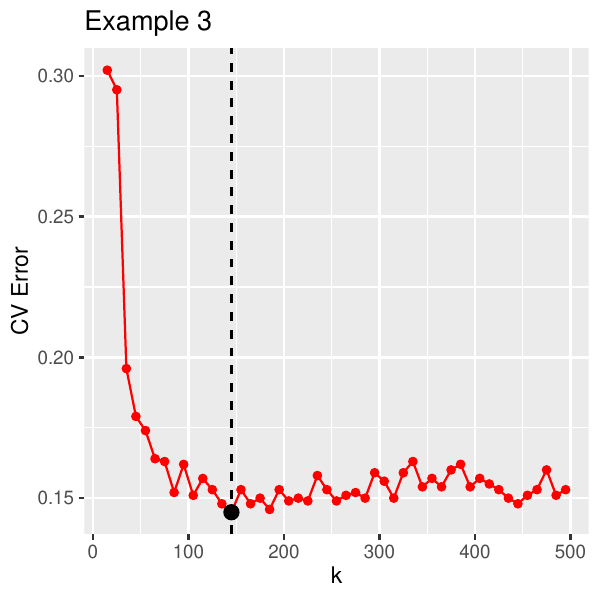}

    \caption{Selection of $k$ for LLO$(\lambda>0)$ (first row) and LLO$(\lambda=0)$ (second row) by CV for Examples 2 and 3 at a single realization with dimension $p=8$ and sample size $n=1000$. }\label{fig_lam}
\end{figure}
\begin{figure}[htbp]
\includegraphics[width=0.49\linewidth]{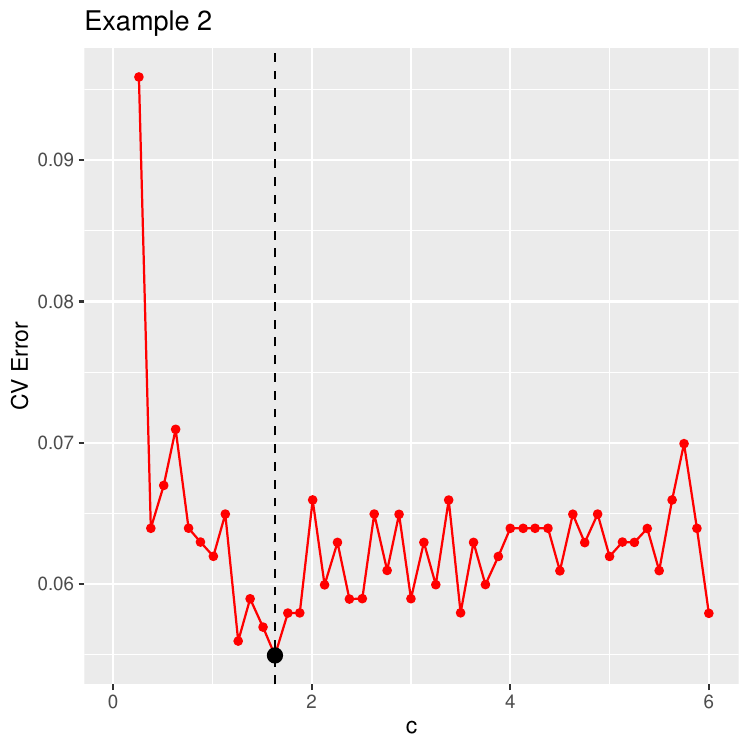}  
\includegraphics[width=0.49\linewidth]{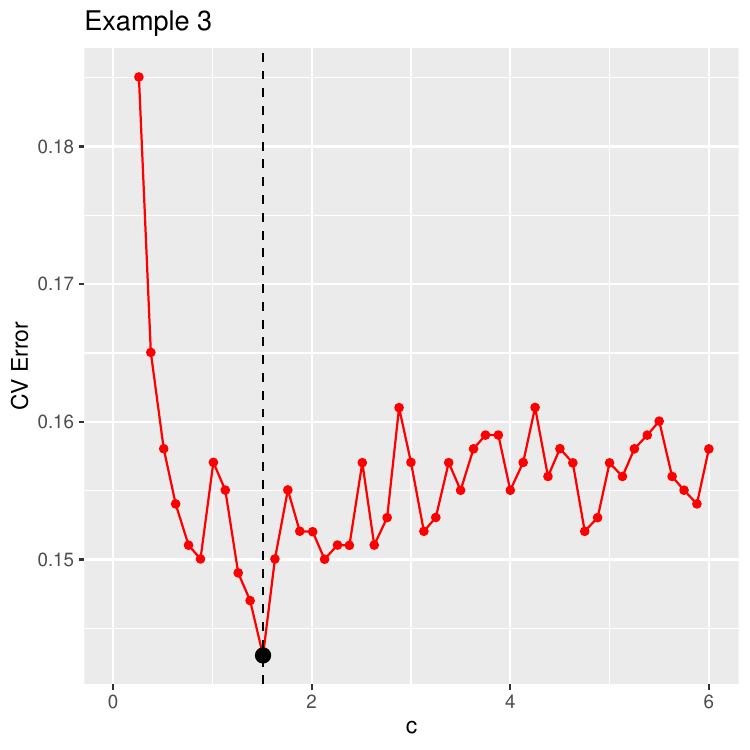} 
\includegraphics[width=0.49\linewidth]{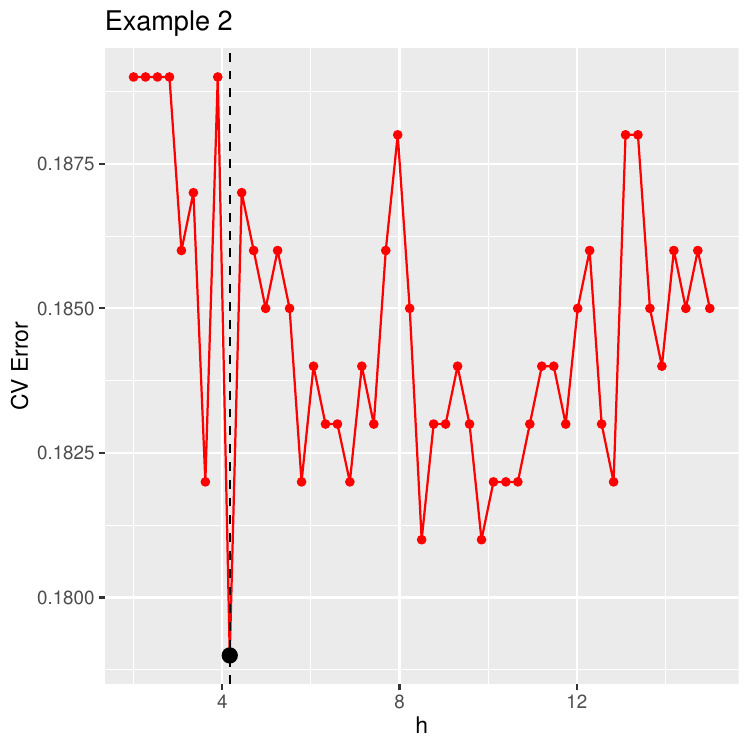}  
\includegraphics[width=0.49\linewidth]{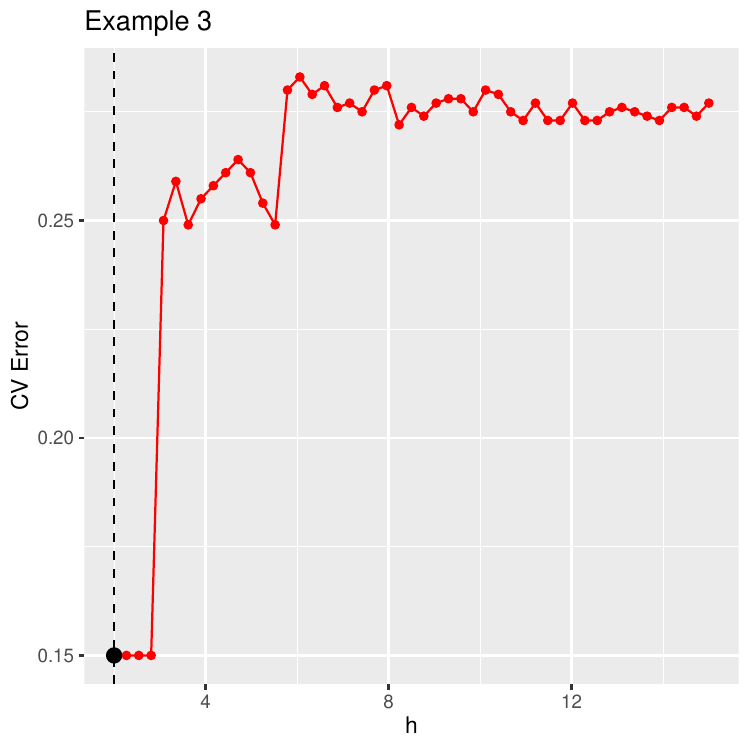} 

    \caption{Selection of $c$ for LGSIM (first row) and of $h$ for WSVM (second row) by CV for Examples 2 and 3 at a single realization  with dimension $p=8$ and sample size $n=1000$. }\label{fig_LGWS}
\end{figure}

\begin{figure}[H]
	\begin{subfigure}{.45\textwidth}
		\centering
		\includegraphics[width=1\linewidth]{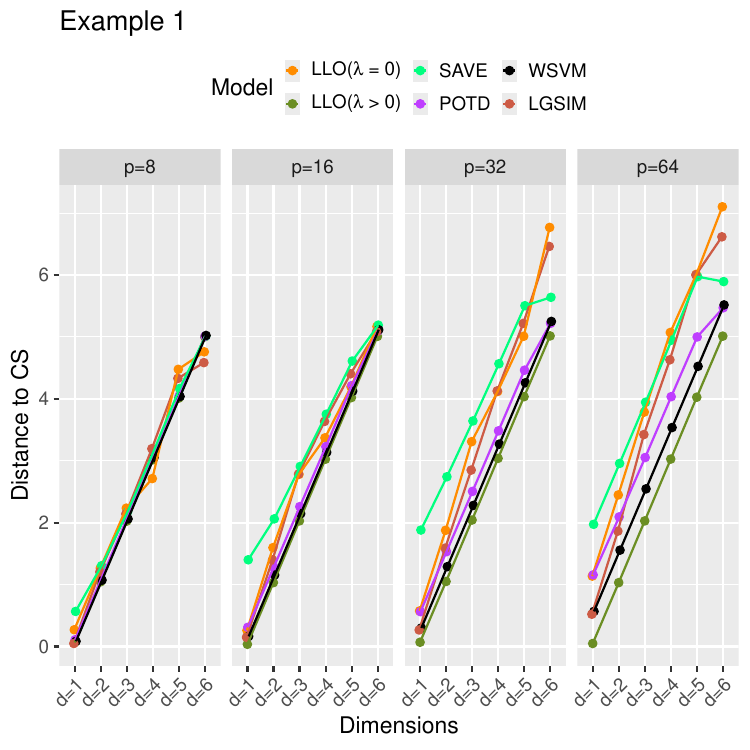}  
	\end{subfigure}
	\begin{subfigure}{.45\textwidth}
		\centering
		\includegraphics[width=1\linewidth]{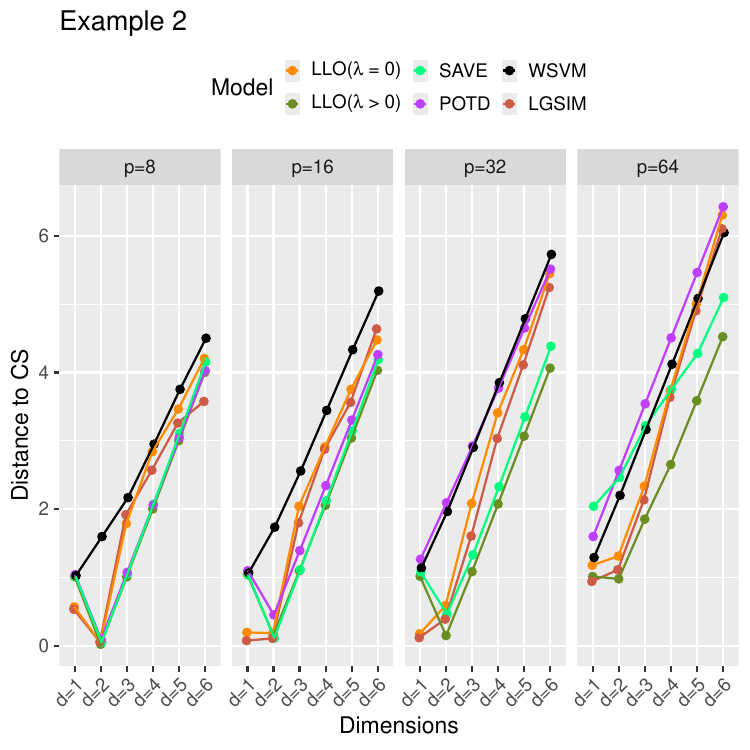}  
	\end{subfigure}
	\newline
	\begin{subfigure}{.45\textwidth}
		\centering
		\includegraphics[width=1\linewidth]{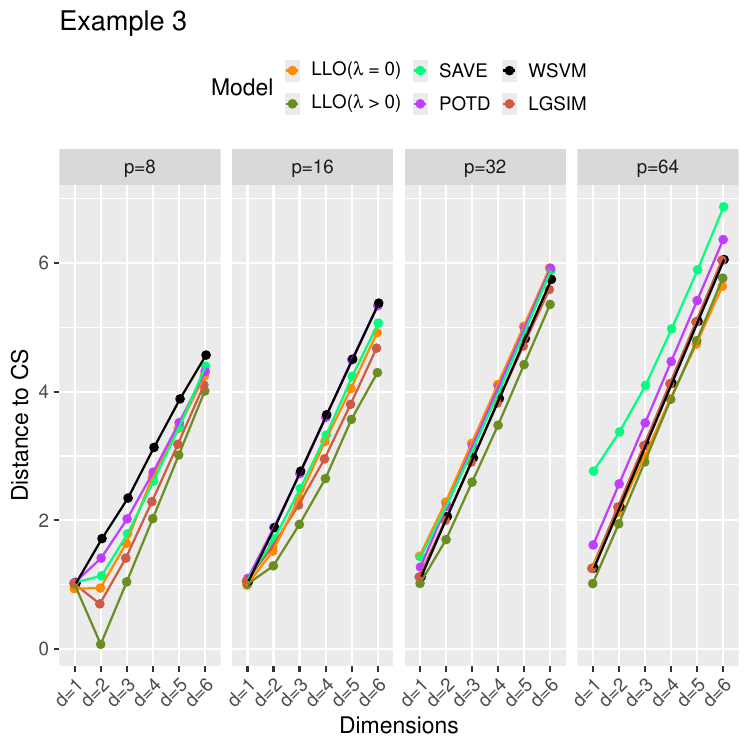}  
	\end{subfigure}
	\caption{Simulation study -- Distance to the central subspace and misclassification risk in Examples 1, 2 and 3, averaged over $N=1000$ replications of a sample of size $n=1000$, as a function of the dimension $d\in \{1,\ldots,6\}$ of the estimated central subspace and $p\in \{8,16,32,64\}$ of the full covariate space.}
	\label{fig:distances_pvarying}
\end{figure}

\begin{figure}[H]
	\begin{subfigure}{.45\textwidth}
		\centering
		\includegraphics[width=1\linewidth]{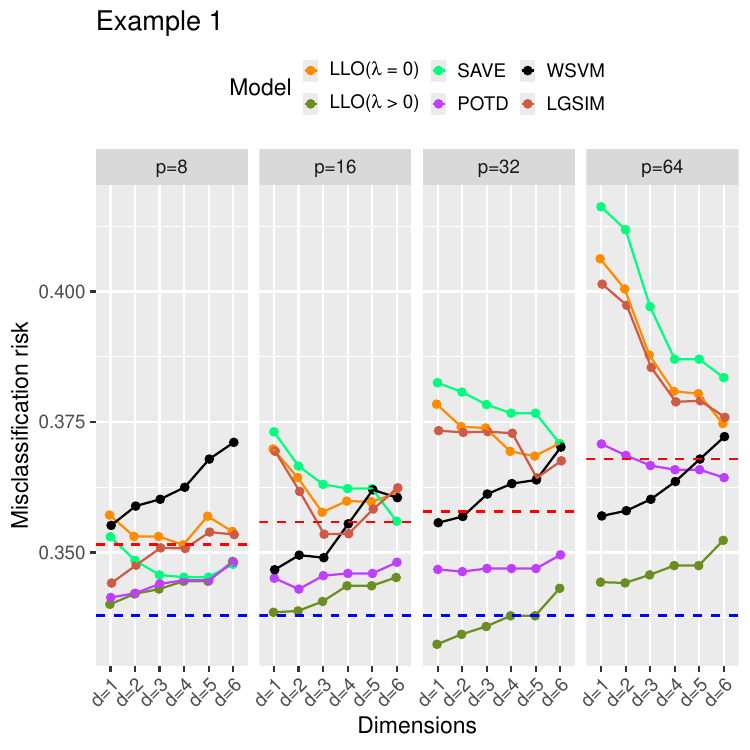}  
	\end{subfigure}
	\begin{subfigure}{.45\textwidth}
		\centering
		\includegraphics[width=1\linewidth]{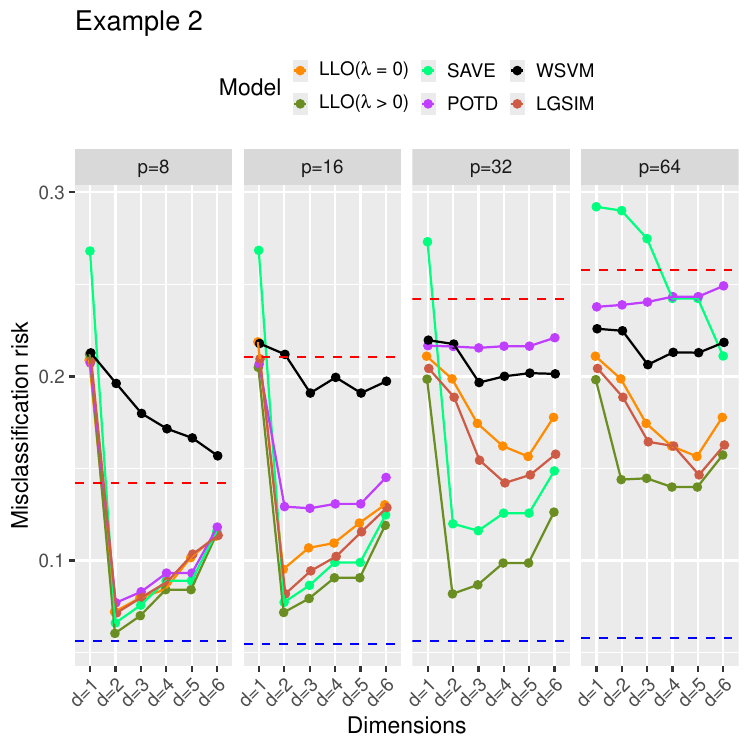}  
	\end{subfigure}
	\newline
	\begin{subfigure}{.45\textwidth}
		\centering
		\includegraphics[width=1\linewidth]{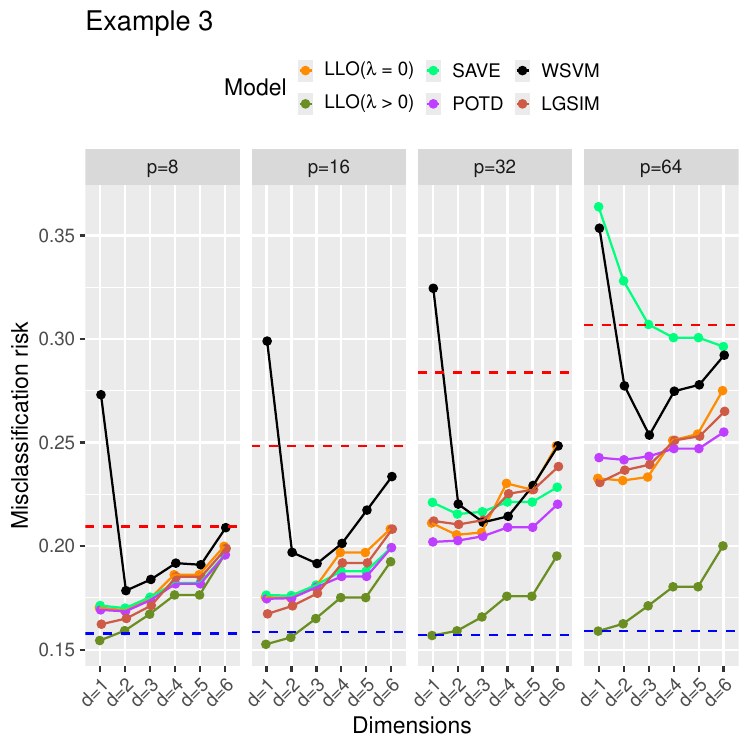}  
	\end{subfigure}
	\caption{{Simulation study -- Misclassification risk in Examples 1, 2 and 3, averaged over $N=1000$ replications of a sample of size $n=1000$, as a function of the dimension $d\in \{1,\ldots,6\}$ of the estimated central subspace and $p\in \{8,16,32,64\}$ of the full covariate space. In the right-hand panels, the red dashed line corresponds to the nearest-neighbor classifier with $d=p$, and the blue dashed line corresponds to this classifier using the covariates projected on the correct population central subspace.}}
	\label{fig:mcrisks}
\end{figure}

\begin{figure}[H]
 \begin{subfigure}{.45\textwidth}
		\centering
		\includegraphics[width=1\linewidth]{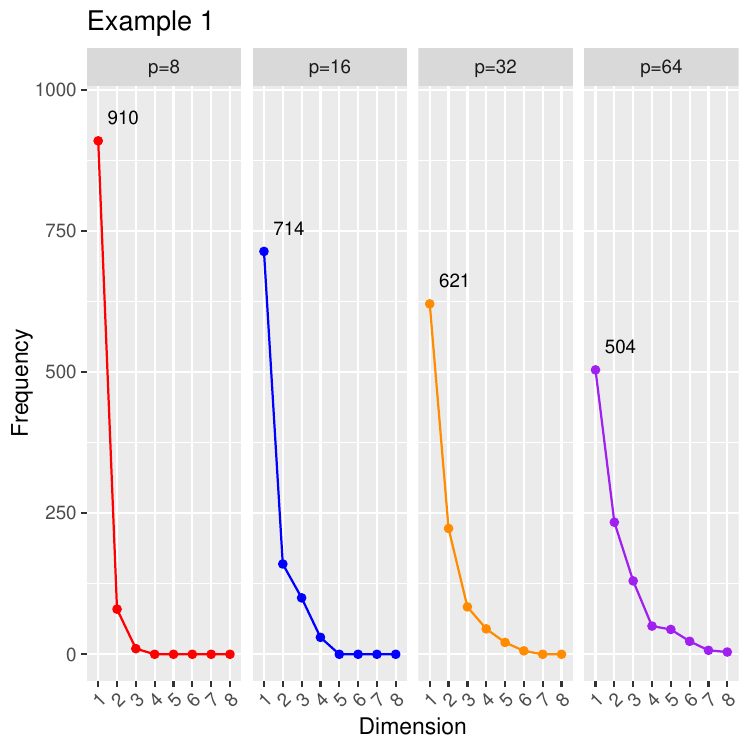}  
\end{subfigure}
 \begin{subfigure}{.45\textwidth}
		\centering
		\includegraphics[width=1\linewidth]{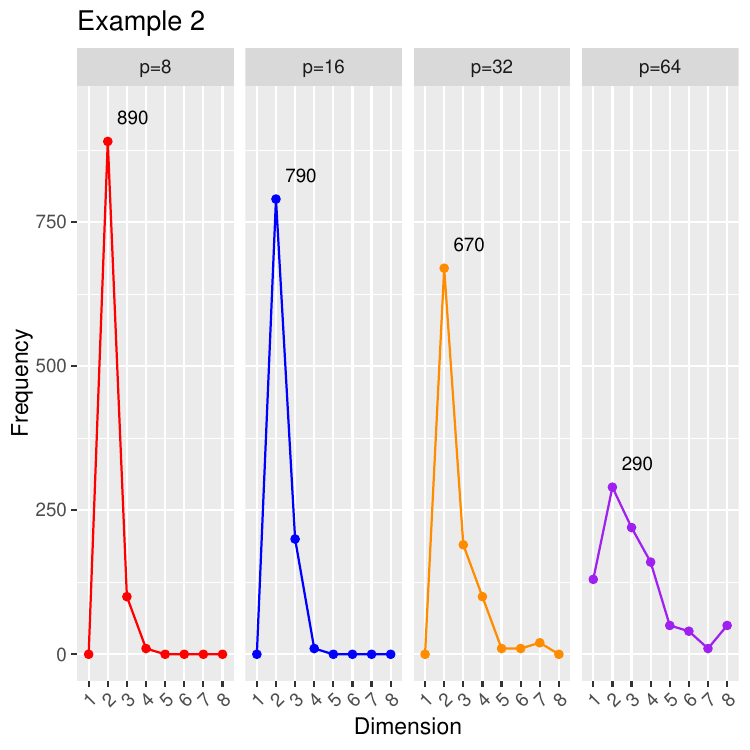}  
\end{subfigure}
\newline
\begin{subfigure}{.45\textwidth}
		\centering
		\includegraphics[width=1\linewidth]{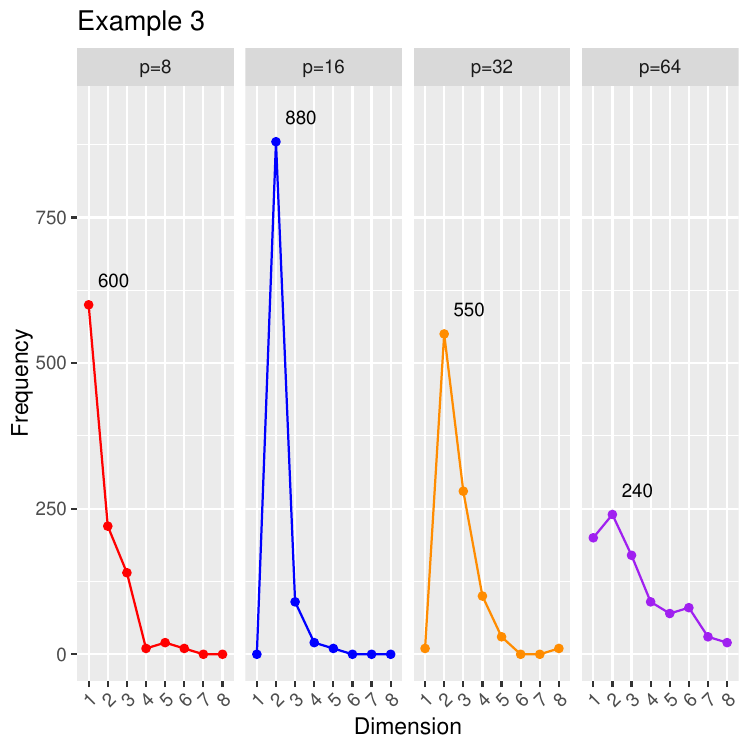}  
\end{subfigure}
	\caption{{Simulation study -- Dimension selection through Algorithm~\ref{alg:feature} over $N=1000$ independent replications of a sample of size $n=1000$, as a function of the dimension $p\in \{8,16,32,64\}$ of the full covariate space. In each panel, the number indicated above the curve gives the number of times the dimension selected in the (absolute or relative) majority of cases was chosen.}}
	\label{fig:dimension_select}
\end{figure}
\newpage

\begin{figure}[H]
\begin{subfigure}{.45\textwidth}
		\centering
		\includegraphics[width=1\linewidth]{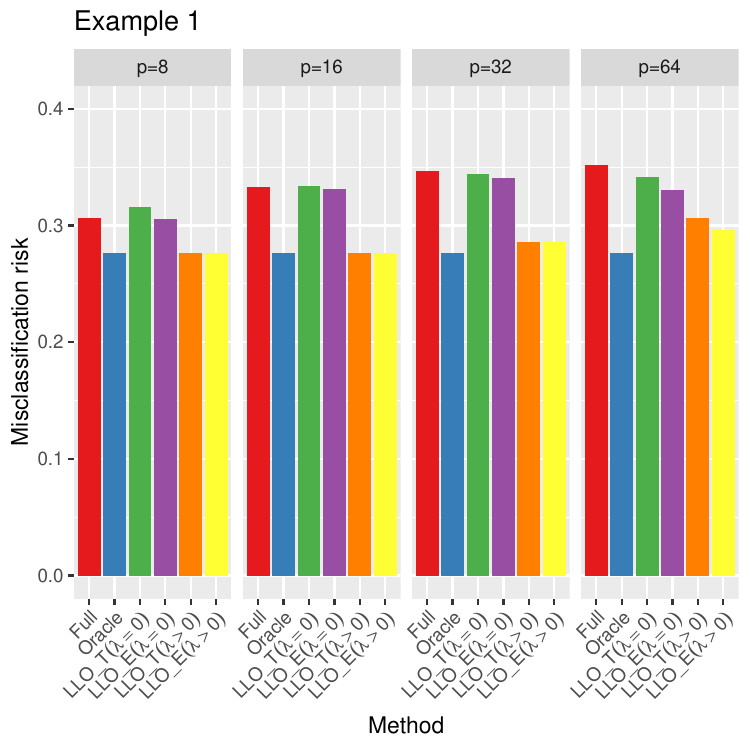}  
\end{subfigure}
\begin{subfigure}{.45\textwidth}
		\centering
		\includegraphics[width=1\linewidth]{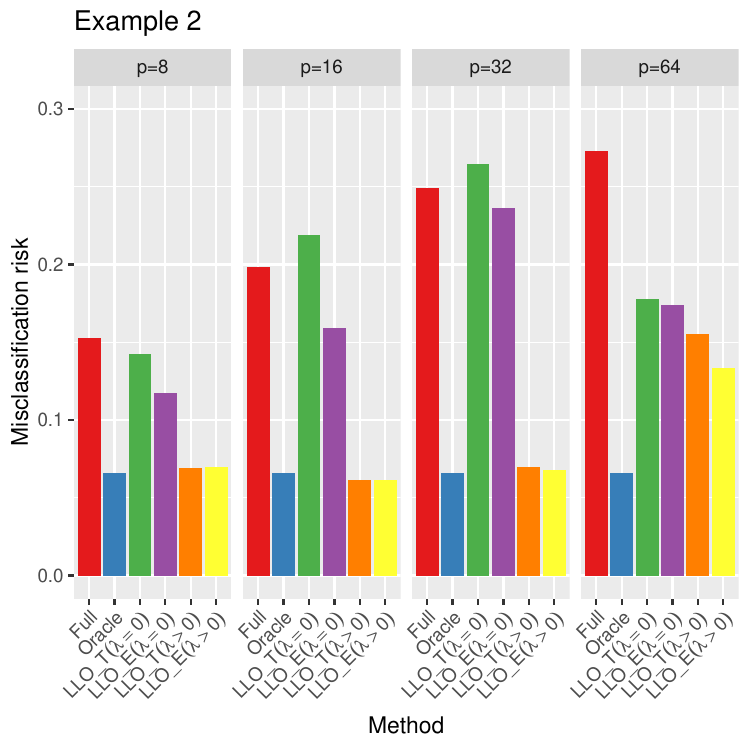}  
\end{subfigure}
\newline
\begin{subfigure}{.45\textwidth}
		\centering
		\includegraphics[width=1\linewidth]{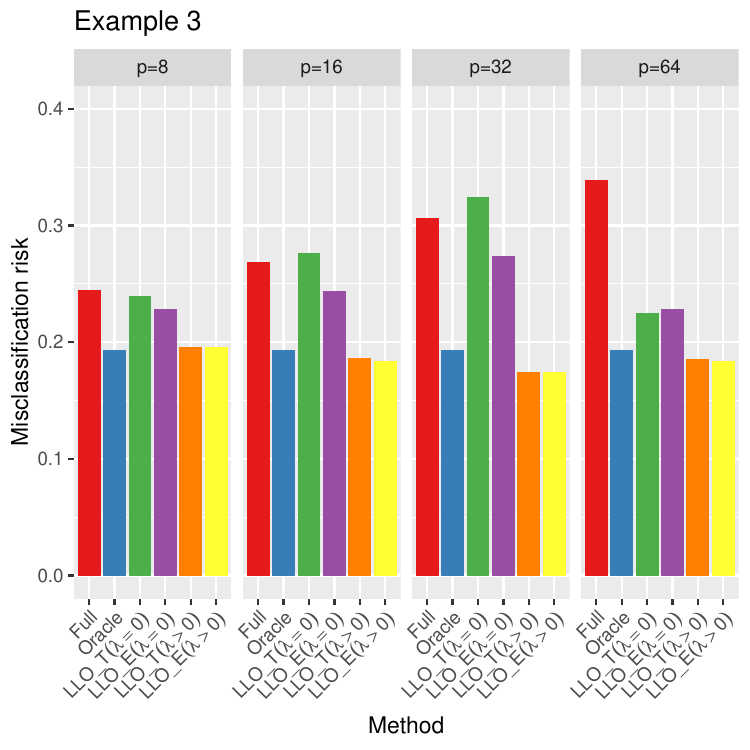}  
\end{subfigure}
	\caption{{Simulation study -- Misclassification risk of the nearest-neighbor classifier with, from left to right, $d=p$ (red bar), the covariates projected on the correct population central subspace (blue bar), the central subspace estimated using the non-penalized LLO($\lambda=0$) method under correct specification of the dimension (green bar) and with the dimension estimated by cross-validation (purple bar), and the central subspace estimated using the penalized LLO($\lambda>0$) method under correct specification of the dimension (orange bar) and with the dimension estimated by cross-validation (yellow bar). All panels are produced using $N=1000$ independent replications of a sample of size $n=1000$ and considering dimensions $p\in \{8,16,32,64\}$ of the full covariate space.}}
	\label{fig:dim_comparison}
\end{figure}
\newpage
\begin{figure}[H]
	\begin{subfigure}{.45\textwidth}
		\centering
		\includegraphics[width=1\linewidth]{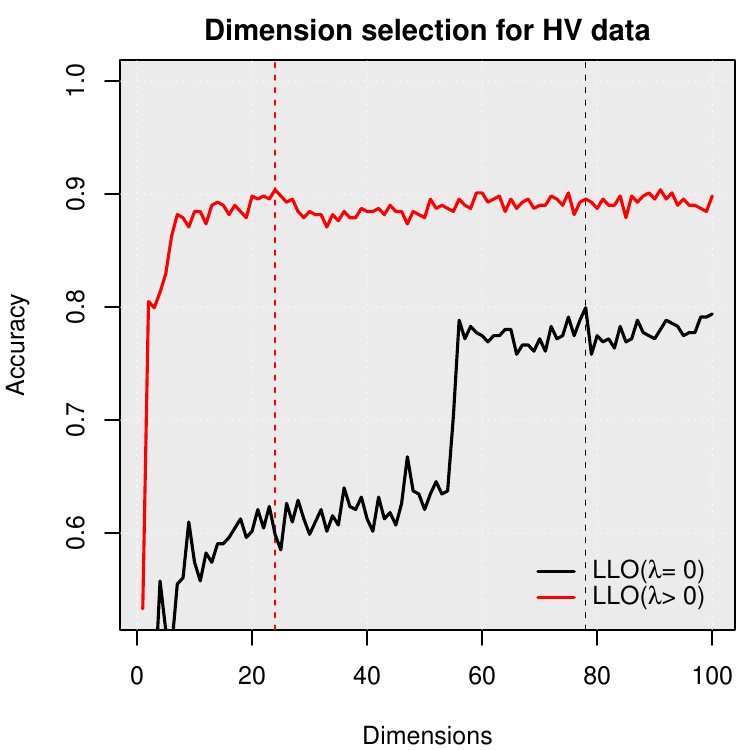}  
		\caption{HV}
	\end{subfigure}
     \begin{subfigure}{.45\textwidth}
		\centering
		\includegraphics[width=1\linewidth]{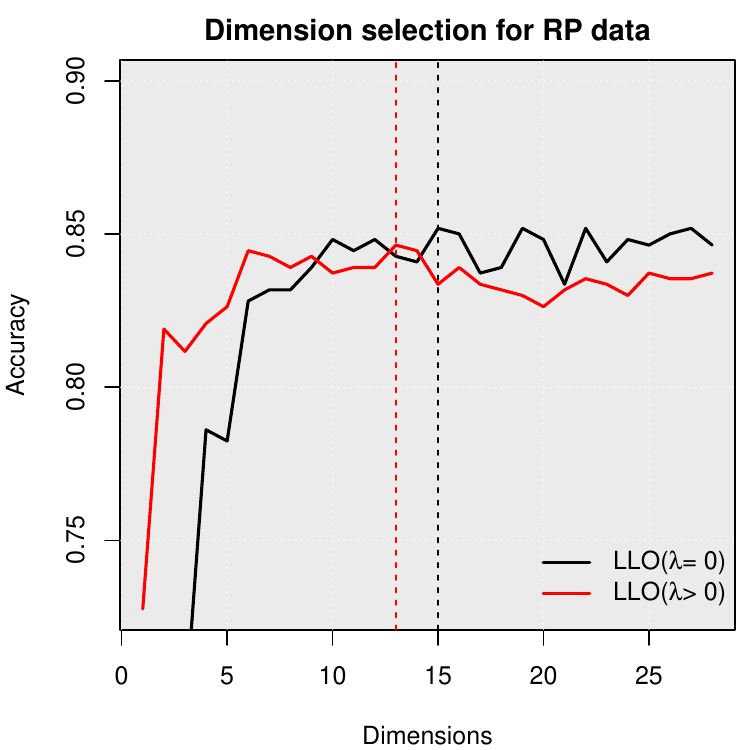}  
		\caption{RP}
	\end{subfigure}
	\begin{subfigure}{.45\textwidth}
		\centering
		\includegraphics[width=1\linewidth]{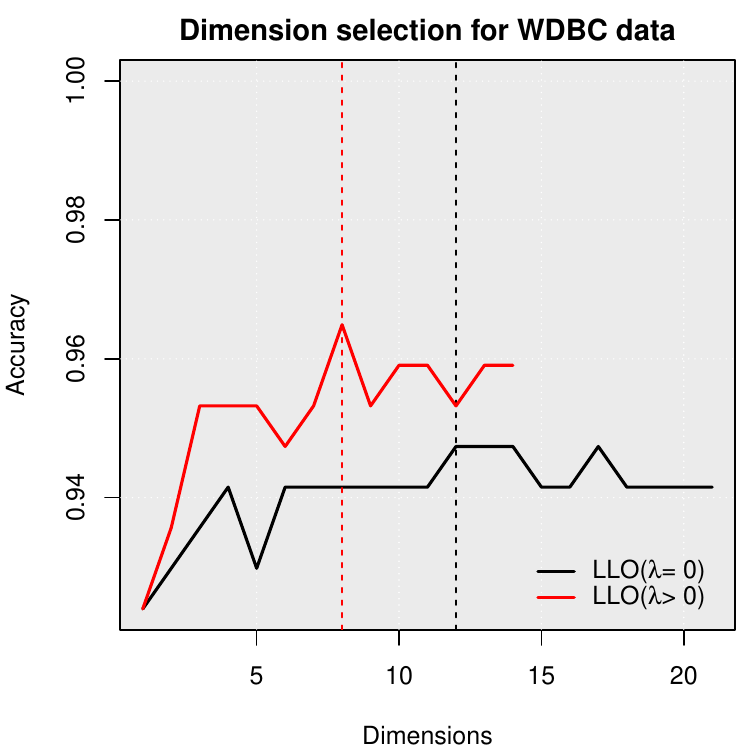}  
		\caption{WDBC}
	\end{subfigure}
	\caption{{Real data analysis -- Dimension selection through cross-validation for LLO($\lambda=0$) and LLO($\lambda>0$), using the random forest classifier. In the WBDC real data analysis, all the eigenvalues of the empirical outer product $\widehat{M}$ were found to be 0 from dimension $d=15$ and $d=22$ onwards when using the LLO($\lambda>0$) and LLO($\lambda=0$), respectively.}}
	\label{fig:Dim_selection_rf}
\end{figure}

\begin{figure}[H]
	\centering
	\begin{subfigure}{.3\textwidth}
		\centering
		\includegraphics[width=1\linewidth]{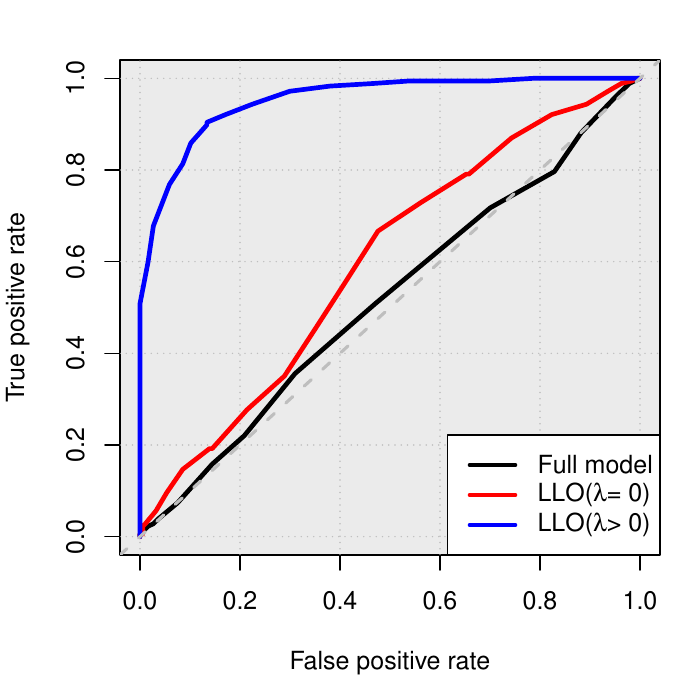}  
		\caption{HV}
	\end{subfigure} \hfill
    \begin{subfigure}{.3\textwidth}
		\centering
		\includegraphics[width=1\linewidth]{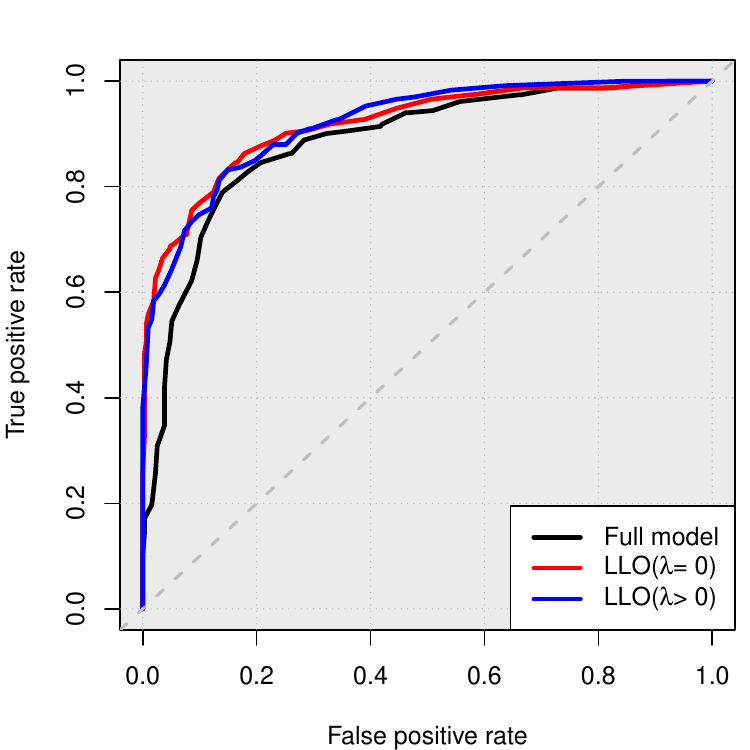}  
		\caption{RP}
	\end{subfigure} \hfill
	\begin{subfigure}{.3\textwidth}
		\centering
		\includegraphics[width=1\linewidth]{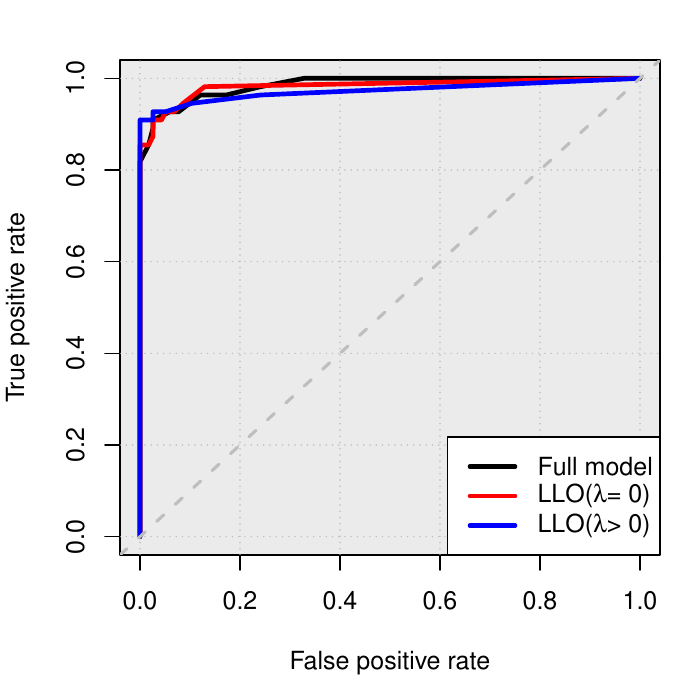}  
		\caption{WDBC}
	\end{subfigure} \hfill
	\begin{subfigure}{.3\textwidth}
		\centering
		\includegraphics[width=1\linewidth]{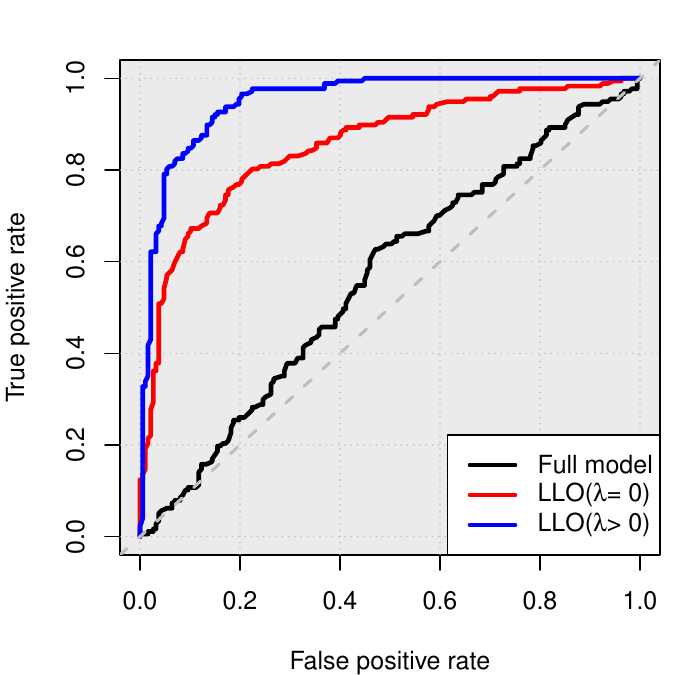}  
		\caption{HV}
	\end{subfigure} \hfill
    \begin{subfigure}{.3\textwidth}
		\centering
		\includegraphics[width=1\linewidth]{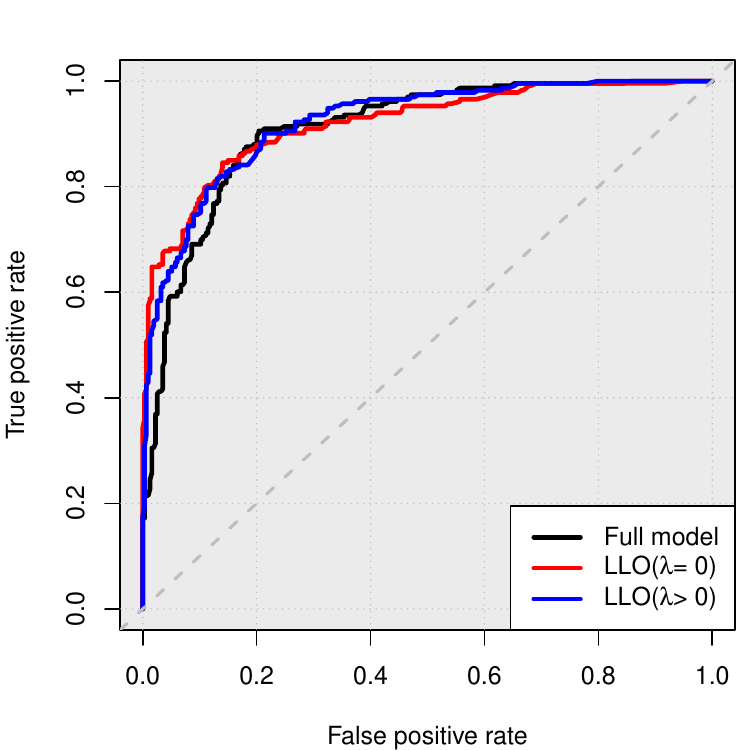}  
		\caption{RP}
	\end{subfigure} \hfill
	\begin{subfigure}{.3\textwidth}
		\centering
		\includegraphics[width=1\linewidth]{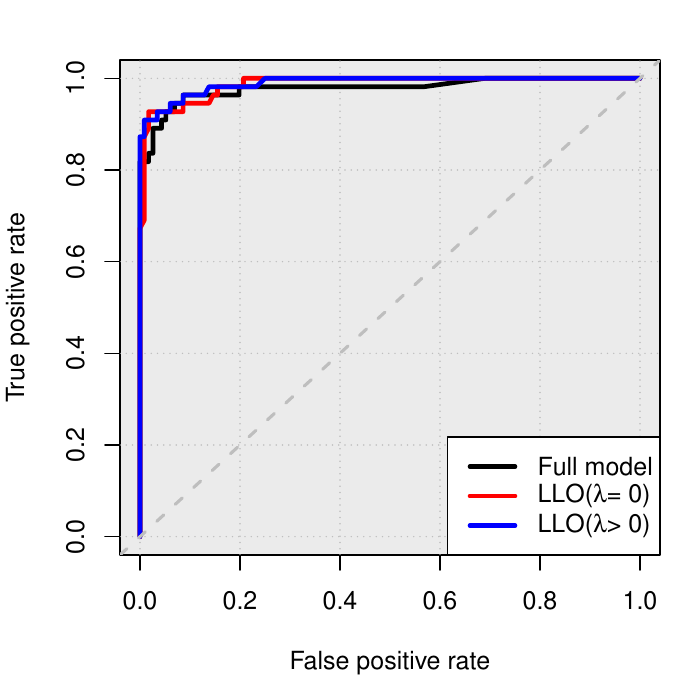}  
		\caption{WDBC}
	\end{subfigure}
	\caption{{Real data analysis -- ROC curve of the nearest-neighbor classifier (top) and random forest classifier (bottom) with no dimension reduction (black curve), dimension reduction following the LLO$(\lambda=0)$ procedure (red curve) and dimension reduction following the LLO$(\lambda>0)$ procedure (blue curve). In each case, the prediction exercise is carried out on the selected testing set.}}
	\label{fig:realapp_ROC}
\end{figure}

\begin{figure}[H]
		\centering
		\includegraphics[width=1\linewidth]{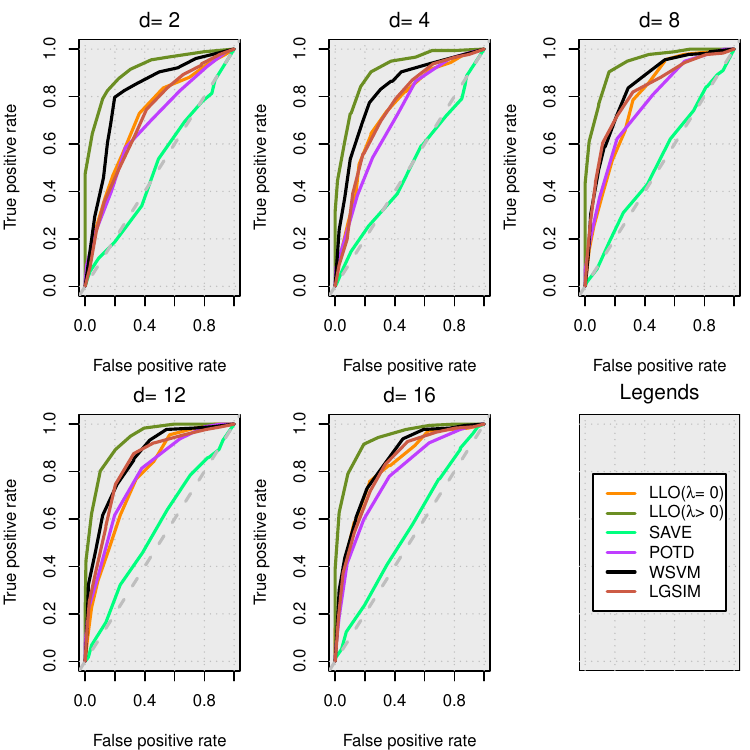}  
		\caption{Real data analysis, HV dataset -- ROC curve of the nearest-neighbor classifier following the LLO$(\lambda=0)$, LLO$(\lambda>0)$, SAVE, POTD, WSVM and LGSIM dimension reduction procedures, for a dimension $d$ of the dimension reduction subspace in $\{2,4,8,12,16\}$. In each case, the prediction exercise is carried out on the selected testing set.}
		\label{fig:comparison_data_roc_HV}
\end{figure}


\begin{figure}[H]
		\centering
		\includegraphics[width=1\linewidth]{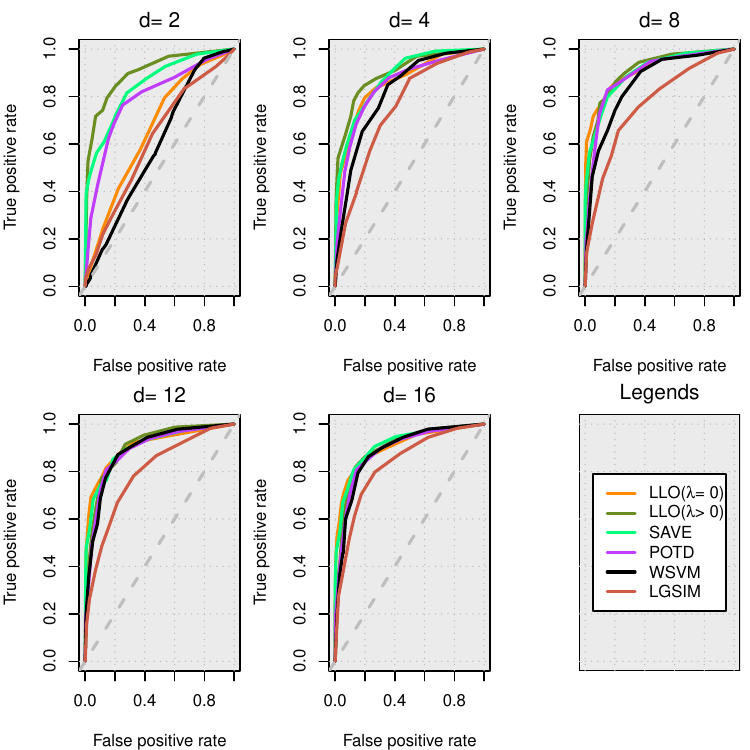}  
		\caption{Real data analysis, RP dataset -- ROC curve of the nearest-neighbor classifier following the LLO$(\lambda=0)$, LLO$(\lambda>0)$, SAVE, POTD, WSVM and LGSIM dimension reduction procedures, for a dimension $d$ of the dimension reduction subspace in $\{2,4,8,12,16\}$. In each case, the prediction exercise is carried out on the selected testing set.}
		\label{fig:comparison_data_roc_MPE}
\end{figure}

\begin{figure}[H]
		\centering
		\includegraphics[width=1\linewidth]{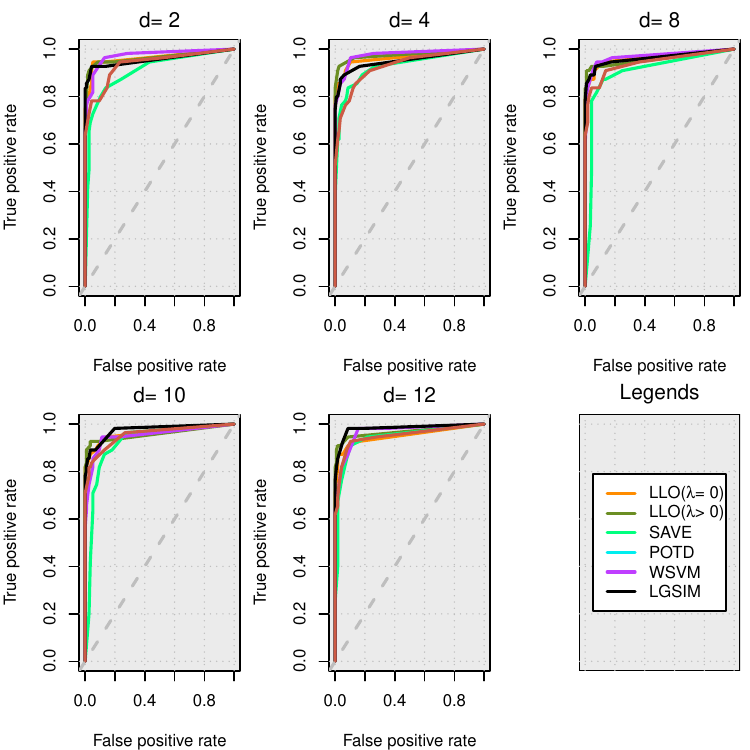}  
		\caption{Real data analysis, WDBC dataset -- ROC curve of the nearest-neighbor classifier following the LLO$(\lambda=0)$, LLO$(\lambda>0)$, SAVE, POTD, WSVM and LGSIM dimension reduction procedures, for a dimension $d$ of the dimension reduction subspace in {$\{2,4,8,10,12\}$}. In each case, the prediction exercise is carried out on the selected testing set.}
		\label{fig:comparison_data_roc_WDBC}
\end{figure}

\end{document}